  \theoremstyle{plain}
  \newtheorem{theorem}{Theorem}
  \newtheorem{lemma}{Lemma}
  \newtheorem{assumption}{Assumption}
  \newtheorem{definition}{Definition}
  \newtheorem{example}{Example}
  \theoremstyle{remark}
  \newtheorem{remark}{Remark}
\newenvironment{breakablealgorithm}
{
	\begin{center}
		\refstepcounter{algorithm}
		\hrule height.8pt depth0pt \kern2pt
		\renewcommand{\caption}[2][\relax]{
			{\raggedright\textbf{\ALG@name~\thealgorithm} ##2\par}%
			\ifx\relax##1\relax 
			\addcontentsline{loa}{algorithm}{\protect\numberline{\thealgorithm}##2}%
			\else 
			\addcontentsline{loa}{algorithm}{\protect\numberline{\thealgorithm}##1}%
			\fi
			\kern2pt\hrule\kern2pt
		}
	}{
		\kern2pt\hrule\relax
	\end{center}
}
\begin{document}

\title{Accelerated Optimization Landscape of Linear-Quadratic Regulator \thanks{This work is supported in part by the National Natural Science Foundation of China (62173191).}}
\author{Lechen Feng\thanks{College of Artificial Intelligence, Nankai University, Tianjin 300350, P.R. China. Email: {\tt fenglechen0326@163.com}.}  \and Yuan-Hua Ni\thanks{College of Artificial Intelligence, Nankai University, Tianjin 300350, P.R. China. Email: {\tt yhni@nankai.edu.cn}.}}
	
\date{\today}

\maketitle

\begin{abstract}
Linear-quadratic regulator (LQR) is a landmark problem in the field of optimal control, which is the concern of this paper. Generally, LQR is classified into state-feedback LQR (SLQR) and output-feedback LQR (OLQR) based on whether the full state is obtained.
  It has been suggested in existing literature that both  SLQR and  OLQR could be viewed as \textit{constrained nonconvex matrix optimization} problems in which the only variable to be optimized is the feedback gain matrix.
 In this paper, we introduce for the first time an accelerated optimization framework of handling the LQR problem, and give its convergence analysis for the cases of SLQR and OLQR, respectively.

  Specifically, a Lipschitz Hessian property of LQR performance criterion is presented, which turns out to be a crucial property for the application of modern optimization techniques.
  For the SLQR problem, a continuous-time hybrid dynamic system is introduced, whose solution trajectory is shown to converge exponentially to the optimal feedback gain with Nesterov-optimal order $1-\frac{1}{\sqrt{\kappa}}$ ($\kappa$  the condition number).
  Then, the symplectic Euler scheme is utilized to discretize the hybrid dynamic system, and a Nesterov-type method with a restarting rule is proposed that preserves the continuous-time convergence rate, i.e., the discretized algorithm admits the Nesterov-optimal convergence order.
  For the OLQR problem, a Hessian-free accelerated framework is proposed, which is a two-procedure method consisting of semiconvex function optimization and negative curvature exploitation.
  In a time $\mathcal{O}(\epsilon^{-7/4}\log(1/\epsilon))$, the method can find an $\epsilon$-stationary point of the performance criterion; this entails that the method improves upon the $\mathcal{O}(\epsilon^{-2})$ complexity of vanilla gradient descent. Moreover, our method provides the second-order guarantee of stationary point.

\textbf{Keywords:} linear-quadratic regulator, accelerated optimization, hybrid dynamic system, symplectic Euler discretization

\end{abstract}

\section{Introduction}



Direct policy gradient optimization is a hot research topic of dynamic decision problems that include optimal control as a particular case; for example, see    \cite{z9,z10,z12,z14,z15,ref1,ref2,ref4} and references therein that relate to the topic of this paper.
The reasons of studying direct policy gradient optimization lie in at least the following points.
First,  policy gradient is a ``end-to-end'' method that allows us to optimize the performance criterion directly.
Second, direct policy gradient inherently enables or inherits extensively parameterized policies, allowing the policies to be explicitly modeled in a variety of ways a priori.
Third, the solvability of model-free control and decentralized control problems may be shed light on by the policy gradient method.
%
%

It is imperative that accelerated optimization methods should be brought up whenever the gradient descent is broached.
In particular, Heavy-ball method \cite{i1} and Nesterov's acceleration gradient (NAG) method \cite{i2}, two representatives of accelerated optimization algorithms, have played significant roles in the disciplines of economics, aviation, and information sciences.
Regretfully, accelerated algorithms have not been developed in the area of policy optimization, especially the area of LQR policy optimization; this brings a significant gap between areas of pure optimization and policy optimization.
In this paper, we present a first-order accelerated policy optimization framework for LQR problems based on modern optimization techniques and continuous-time optimization theory.
In contrast to identifying an efficient algorithm for solving the Algebraic Riccati Equation (ARE) (that is generally a second-order method), such an accelerated framework is actually compared to the standard gradient descent method proposed in \cite{ref2}.

\subsection{Related Works}

{\textbf{Continuous-time optimization:}}
Continuous-time optimization algorithm, which links optimization problem with dynamic system and cybernetics closely, has been one of hot research topics in the areas of optimization and beyond,
and continuous-time versions of some classic optimization algorithms have been derived; see for example gradient descent \cite{Smith-1993}, Nemirovski's mirror descent \cite{Krichene-2015}, Newton method \cite{Attouch-2001}, Heavy-ball method \cite{Alvarez-SICON2000} and NAG \cite{z1}. In particular, the continuous-time viewpoint of handling NAG and its variants has gained much attention \cite{z3,z4,z5,z6,z7,ref10,ref11} recently.
However, in reality, our primary concern is still the discrete-time algorithm, as computer can only process discrete-time signals due to their physical limitations.
Different discretization schemes are studied for continuous-time NAG-type algorithms such as the three-series method \cite{z3,z4}, different equation solver method \cite{z5,z6}, semi-implicit Euler method \cite{z7,ref11}, symplectic conservation method \cite{z10,Betancourt-2018,Franca-2020,Jordan-NIPS2019}.
Sadly, the question that which discretization scheme is optimal remains open to the authors' best knowledge.

\noindent{\textbf{State feedback LQR:}}
According to whether the full state is measured, LQR, a key issue in the field of optimal control, can be classified into SLQR (state feedback LQR) and OLQR (output feedback LQR).
From the viewpoint of optimization, the SLQR cost is nonconvex but satisfies the Polyak-Lojasiewicz (PL) condition \cite{z9,ref2}.
Hence, gradient-based policy optimization algorithm can be ensured the convergence to the global optimal linear feedback gain
exponentially in both model-based \cite{ref2, Bu-2019} and model-free settings \cite{z10,ref1,Fazel-2018}.
Moreover, policy optimization methods are also studied for SLQR with additional constraints, e.g., decentralized LQR problem \cite{Zheng-2019,Sun-2023}, risk constrained LQR problem \cite{Basar-2023} and $\mathcal{H}_2$/$\mathcal{H}_\infty$ problem \cite{Basar-2019}.

\noindent{\textbf{Output feedback LQR:}}
Due to the absence of PL condition \cite{z12} and the disconnection of feasible set \cite{z13}, the policy optimization of OLQR is challenging and remains some open problems \cite{ref2}.
For static OLQR controller, fortunately, gradient descent can converge to a stationary point \cite{ref2} and to a local minimizer linearly with close  starting point \cite{z14}.
Since static output feedback policies are typically insufficient for achieving good closed-loop control performance, dynamic output feedback policies are investigated in \cite{z15}, where the unique observable stationary point is characterized providing a certificate of optimality for policy gradient methods under moderate assumptions.

In this paper, we examine the model-based continuous-time LQR problem from the viewpoint of optimization and present an  accelerated gradient-based policy optimization framework. If compared with existing results, the main contributions of the presented paper are as follows.
\begin{enumerate}
	
  \item We prove for the first time the Lipschitz Hessian property of performance criterion (with respect to feedback gain), which is essential for the convergence analysis of the algorithms of the  presented paper.

  It should be noted that although an analogous result has been presented for the discrete-time LQR problem \cite{z14}, the methodology of  \cite{z14} cannot be applied to the continuous-time setting since its high dependence on the structure of discrete-time system. Hence, the proof of Lipschitz Hessian property of this paper is of independent interest.

  \item For the SLQR problem and motivated by \cite{ref10}, we propose a gradient-based method with momentum term and restarting rule in both continuous time and discrete time, respectively, for the first time. It is first shown that the gradient-based algorithms  can converge to the optimal feedback gain exponentially with Nesterov-optimal order $1-\frac{1}{\sqrt{\kappa}}=1-\sqrt{\frac{\mu}{L}}$.
      Noting that the PL condition and $L$-smoothness of  \cite{ref10} are globally posed on $\mathbb{R}^n$, instead, the SLQR cost possesses the PL condition and $L$-smoothness only on a bounded sublevel set $\mathcal{S}_0$; hence, a new restarting rule is introduced in this paper to restrict the iteration sequences within $\mathcal{S}_0$, and the parameter selection in the algorithms must be carefully re-examined after each restarting.

  \item For the OLQR problem, we discuss how to find an $\epsilon$-stationary point of OLQR cost with second-order guarantee for the first time. Based on \cite{ref13}, a Hessian-free accelerated framework is introduced and such second-order stationary point can be located in a time $\mathcal{O}(\epsilon^{-7/4}\log(1/\epsilon))$.

      In contrast to \cite{ref13}, the OLQR cost, similar to the SLQR setting, possesses the $L$-smoothness only on a sublevel set rather than on the  whole space $\mathbb{R}^n$. Consequently, we must design a restarting rule and re-evaluate the complexity of the proposed algorithms.
      Noting that the second-order stationary point has been investigated in \cite{ZHeng-2022CDC} for dynamic-feedback LQG problem, the static-feedback OLQR problem of this paper  exhibits totally different structure and requires an independent research.

\end{enumerate}

{\textit{Notation}}. Let $\Vert \cdot\Vert$ and $\Vert \cdot\Vert_F$ be the spectral norm and Frobenius norm of a matrix. $\mathcal{S}^n_+$ is the set of positive definite matrices of $n\times n$; $I$ is identity matrix.  $A\succ B(A\succeq B)$ indicates that the matrix $A-B$ is positive (semi-)definite, whereas $A\prec B(A\preceq B)$ indicates that the it is negative (semi-)definite. Eigenvalues $\lambda_i(A)~i=1,2,\dots,n$ of a matrix $A$ are indexed according to their real parts in the ascending order, i.e., $\Re(\lambda_1(A))\leq\dots\leq\Re(\lambda_n(A))$. We use the notation $u_1\succ u_2$ if the function $u_1$ dominates the function $u_2$ for large arguments, i.e., $\lim_{\kappa\to\infty}u_1(\kappa)/u_2(\kappa)=\infty$, where $u_1,u_2$ are real-valued functions that are positive for large arguments. Similarly, the notation $u_1\prec u_2$ implies that the function $u_2$ dominates $u_1$ for large arguments, and the $u_1\sim u_2$ implies that neither $u_1$ nor $u_2$ are dominant for large arguments; that is,
$$\lim_{\kappa\to\infty}\frac{u_1(\kappa)}{u_2(\kappa)}<\infty,~~\lim_{\kappa\to\infty}\frac{u_2(\kappa)}{u_1(\kappa)}<\infty.$$
\section{Problem Formulation}

Consider a linear time-invariant system
\begin{equation}\label{eq1}
  \begin{aligned}
  &\dot{x}(t)=Ax(t)+Bu(t),\\
  &y(t)=Cx(t),\\
  &\mathbb{E}x(0)x(0)^{\rm T}=\Sigma
  \end{aligned}
\end{equation}
with state $x$, input $u$, output $y$ and matrices $A\in\mathbb{R}^{n\times n},B\in\mathbb{R}^{n\times m},C\in\mathbb{R}^{r\times n}$. The infinite-horizon LQR problem of this paper is to find a linear
feedback gain $K\in\mathbb{R}^{m\times r}$ such that $u(t)=-Ky(t)$  minimizes the following performance criterion
\begin{equation}\label{euq-2}
  f=\mathbb{E}_{x_0}\int_{0}^{\infty}\left(x(t)^{\rm T}Qx(t)+u(t)^{\rm T}Ru(t)\right){\rm d}t.
\end{equation}
The expectation is calculated over the distribution of initial condition $x(0)$ with zero mean and covariance matrix $\Sigma$ given by (\ref{eq1}), and the cost function $f$ is parameterized by $0\prec Q\in \mathcal{S}_+^{n}$ and $0\prec R\in \mathcal{S}_+^{m}$. Moreover, we distinguish the LQR problem via the selection of $C$ :
\begin{itemize}
  \item SLQR : if $C=I$, i.e. the state $x(t)$ is available to control input.
  \item OLQR : if $C\neq I$, the output $y(t)$ is the only information available.
\end{itemize}

Under the feedback control $u(t)=-Ky(t)$, the closed loop system is given by
\begin{equation}\label{eq3}
  \dot{x}(t)=A_Kx(t),~~A_K=(A-BKC),
\end{equation}
and LQR performance becomes
\begin{equation}\label{eq4}
  f(K)=\mathbb{E}_{x_0}\int_{0}^{\infty}\left(x(t)^{\rm T}(Q+C^{\rm T}K^{\rm T}RKC)x(t)\right){\rm d}t.
\end{equation}
Specifically, we use the notation $f(K)$ to highlight that the index of LQR performance depends on gain matrix $K$ only, i.e., finding the optimal control of system (\ref{eq1}) is equivalent to finding the optimal gain
matrix $K$. Thus, our optimization problem is
\begin{equation}\label{eq5}
\min_{K\in\mathcal{S}} f(K),
\end{equation}
where $\mathcal{S}$ is the feasible set defined as the set of stabilizing feedback gains
\begin{equation}\label{eq6}
\mathcal{S}=\left\{K\in\mathbb{R}^{m\times r}:\Re[\lambda_i(A-BKC)]<0,\forall i=1,\dots,n\right\}.
\end{equation}
In particular, our results require an initial stabilizing controller $K_0\in\mathcal{S}$. This assumption is not strict, as it is well acknowledged that stabilizing policies can be achieved via policy gradient approaches \cite{llka-2023}. As a conclusion, we suppose the following assumptions hold.

\begin{assumption}\label{assumption1}
$K_0\in \mathcal{S}$ exists and is available; weighting matrices are positive definite, i.e., matrix $Q,R,\Sigma\succ 0$
\end{assumption}

Note that the integral  (\ref{eq4}) must be evaluated in order to perform the optimization problem (\ref{eq5}).
The next lemma formulates problem (\ref{eq5}) as a matrix nonconvex and constrained optimization problem.
\begin{lemma}[\cite{ref2}]
Optimization problem (\ref{eq5}) is equivalent to the following matrix constrained optimization problem:
\begin{equation}
\begin{aligned}\label{eq7}
\min_K~~&f(K)={\rm Tr}(X\Sigma),\\
{\rm s.t.}~~&(A-BKC)^{\rm T}X+X(A-BKC)+C^{\rm T}K^{\rm T}RKC+Q=0,\\
&X\succ0.
\end{aligned}
\end{equation}
\end{lemma}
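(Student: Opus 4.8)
The plan is to evaluate the integral defining $f(K)$ in (\ref{eq4}) explicitly through the closed-loop state transition matrix and to identify the resulting quadratic form with ${\rm Tr}(X\Sigma)$. Since $K\in\mathcal{S}$, the closed-loop matrix $A_K=A-BKC$ is Hurwitz, so the solution of (\ref{eq3}) is $x(t)=e^{A_Kt}x(0)$ and decays exponentially to the origin. Abbreviating the state-weighting matrix as $M:=Q+C^{\rm T}K^{\rm T}RKC$, I would substitute $x(t)=e^{A_Kt}x(0)$ into (\ref{eq4}) to obtain
\[
f(K)=\mathbb{E}_{x_0}\int_0^\infty x(0)^{\rm T}e^{A_K^{\rm T}t}Me^{A_Kt}x(0)\,{\rm d}t.
\]
The exponential decay guarantees absolute integrability, so Fubini's theorem permits interchanging expectation and integration; combining the cyclic property of the trace with $\mathbb{E}_{x_0}[x(0)x(0)^{\rm T}]=\Sigma$ then reduces the cost to $f(K)={\rm Tr}(X\Sigma)$, where
\[
X:=\int_0^\infty e^{A_K^{\rm T}t}Me^{A_Kt}\,{\rm d}t.
\]

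Next I would verify that this $X$ is precisely the unique solution of the Lyapunov equation appearing in (\ref{eq7}). The crucial step is to differentiate the integrand,
\[
\frac{\rm d}{{\rm d}t}\bigl(e^{A_K^{\rm T}t}Me^{A_Kt}\bigr)=A_K^{\rm T}e^{A_K^{\rm T}t}Me^{A_Kt}+e^{A_K^{\rm T}t}Me^{A_Kt}A_K,
\]
and to integrate this identity from $0$ to $\infty$. By the fundamental theorem of calculus the left-hand side equals the boundary difference of $e^{A_K^{\rm T}t}Me^{A_Kt}$, whose value as $t\to\infty$ vanishes (again by the Hurwitz property) and equals $M$ at $t=0$, while the right-hand side equals $A_K^{\rm T}X+XA_K$. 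This yields $A_K^{\rm T}X+XA_K+M=0$, i.e. the equality constraint of (\ref{eq7}). Uniqueness follows because, $A_K$ being Hurwitz, the Lyapunov operator $X\mapsto A_K^{\rm T}X+XA_K$ has spectrum $\{\lambda_i(A_K)+\lambda_j(A_K)\}_{i,j}$, none of whose elements is zero, so the operator is invertible.

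Finally I would confirm $X\succ0$. Under Assumption \ref{assumption1}, $Q\succ0$ and $C^{\rm T}K^{\rm T}RKC\succeq0$, whence $M\succ0$; therefore, for any nonzero vector $v$,
\[
v^{\rm T}Xv=\int_0^\infty (e^{A_Kt}v)^{\rm T}M(e^{A_Kt}v)\,{\rm d}t>0,
\]
since the integrand is strictly positive for each $t$ (as $e^{A_Kt}$ is nonsingular, $e^{A_Kt}v\neq0$). I do not expect any single step to be a genuine obstacle, as each is standard Lyapunov theory; the only real care is required in justifying convergence of the improper integral and the vanishing of the boundary term, both of which rest essentially on the stabilizing assumption $K\in\mathcal{S}$. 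This is exactly where the feasible-set constraint (\ref{eq6}) is indispensable, and it explains why the formulation (\ref{eq7}) is equivalent to (\ref{eq5}) only over stabilizing gains.
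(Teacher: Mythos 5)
The paper itself offers no proof of this lemma---it is imported directly from \cite{ref2}---so there is no internal argument to compare against, and your proposal must be judged on its own merits. The construction you give is the standard Lyapunov-theoretic one and is correct as far as it goes: exponential stability of $A_K$ for $K\in\mathcal{S}$ justifies the interchange of expectation and integration, the cyclic trace identity yields $f(K)={\rm Tr}(X\Sigma)$ with $X=\int_0^\infty e^{A_K^{\rm T}t}Me^{A_Kt}\,{\rm d}t$, differentiating $e^{A_K^{\rm T}t}Me^{A_Kt}$ and integrating gives the Lyapunov equation, nonsingularity of the Lyapunov operator gives uniqueness, and $M\succ0$ (from $Q\succ0$ in Assumption \ref{assumption1}) gives $X\succ0$.

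However, what you have actually proved is only one inclusion of the claimed equivalence: every $K$ feasible for (\ref{eq5}), i.e.\ every $K\in\mathcal{S}$, is feasible for (\ref{eq7}) with matching objective value. Equivalence of the two minimization problems also requires the converse: if a pair $(K,X)$ with $X\succ0$ satisfies the Lyapunov constraint in (\ref{eq7}), then $K$ must lie in $\mathcal{S}$---otherwise (\ref{eq7}) could admit feasible points, possibly of smaller cost, that (\ref{eq5}) excludes. Your closing remark that the constraint (\ref{eq6}) ``is indispensable'' gestures at this but does not prove it; note that (\ref{eq7}) nowhere imposes $K\in\mathcal{S}$ explicitly, and the whole point of the reformulation is that the constraint $X\succ0$ encodes stabilization automatically. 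The missing step is the classical (converse) Lyapunov stability theorem, which here takes two lines: if $A_Kv=\lambda v$ with $v\neq0$, then since $A_K$ is real,
\begin{equation*}
0>-v^*\bigl(Q+C^{\rm T}K^{\rm T}RKC\bigr)v=v^*\bigl(A_K^{\rm T}X+XA_K\bigr)v=2\Re(\lambda)\,v^*Xv,
\end{equation*}
and $v^*Xv>0$ forces $\Re(\lambda)<0$, i.e.\ $K\in\mathcal{S}$; the cost identity then follows from your forward computation. With that addition the equivalence is complete.
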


\section{Properties of $f(K)$ and $\mathcal{S}$}\label{section3}


\begin{lemma}[\cite{ref2}]\label{lemma2}
When $C=I$, then the sets $\mathcal{S}$ and $\mathcal{S}_0$ are connected for every $K_0\in\mathcal{S}$, where $\mathcal{S}_0$ is the sublevel set of $\mathcal{S}$, i.e.,
\begin{equation*}
\mathcal{S}_0=\{K\in\mathcal{S}:f(K)\leq f(K_0)\}.
\end{equation*}
\end{lemma}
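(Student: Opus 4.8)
The plan is to reduce the connectedness of $\mathcal{S}$ to the connectedness of its sublevel sets, and to prove the latter by flowing every stabilizing gain into the unique global minimizer along the negative gradient of $f$. Throughout take $C=I$, so that $A_K=A-BK$ and the gradient admits the closed form $\nabla f(K)=2(RK-B^{\rm T}X)Y$, where $X\succ0$ solves the Lyapunov equation in (\ref{eq7}) and $Y\succ0$ solves $A_KY+YA_K^{\rm T}+\Sigma=0$; positivity of $Y$ follows from $\Sigma\succ0$ together with $A_K$ Hurwitz. The first step I would carry out is to establish that $f$ is coercive on $\mathcal{S}$, meaning $f(K)\to+\infty$ whenever $K$ approaches the topological boundary $\partial\mathcal{S}$ (some eigenvalue of $A_K$ tending to the imaginary axis) or $\Vert K\Vert\to\infty$. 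For the boundary blow-up I would use the integral representation $X=\int_0^\infty e^{A_K^{\rm T}t}(Q+K^{\rm T}RK)e^{A_Kt}\,{\rm d}t$: since $Q\succ0$, this diverges as the spectral abscissa of $A_K$ tends to $0^-$, and because $\Sigma\succ0$ this forces $f(K)={\rm Tr}(X\Sigma)\to\infty$; growth at infinity follows from the $K^{\rm T}RK$ term with $R,\Sigma\succ0$. Coercivity plus continuity then shows every sublevel set $\mathcal{S}_0$ is compact and stays uniformly inside $\mathcal{S}$.

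Next I would verify that $f$ has a unique stationary point $K^\star\in\mathcal{S}$, the optimal LQR gain. From $\nabla f(K)=2(RK-B^{\rm T}X)Y$ and $Y\succ0$, stationarity is equivalent to $K=R^{-1}B^{\rm T}X$; substituting into the Lyapunov constraint yields the algebraic Riccati equation, whose stabilizing solution is unique by classical LQR theory. Hence $K^\star$ is the only critical point of $f$ on $\mathcal{S}$ and it is the global minimizer, which every sublevel set contains.

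With these in hand I would run the negative gradient flow $\dot K(t)=-\nabla f(K(t))$ with $K(0)=K_1\in\mathcal{S}_0$. Since $\frac{d}{dt}f(K(t))=-\Vert\nabla f(K(t))\Vert_F^2\le0$, the trajectory stays in $\mathcal{S}_0$, hence (by the coercivity step) in a compact subset of $\mathcal{S}$, so the flow exists for all $t\ge0$ and cannot leak out through $\partial\mathcal{S}$. Invoking the gradient-dominance (PL) property of the SLQR cost together with the uniqueness of the stationary point gives $f(K(t))-f(K^\star)\to0$ and $K(t)\to K^\star$, so $t\mapsto K(t)$, compactified with endpoint $K^\star$, is a continuous path in $\mathcal{S}_0$ joining $K_1$ to $K^\star$. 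As $K_1\in\mathcal{S}_0$ is arbitrary and all such paths terminate at the common point $K^\star$, $\mathcal{S}_0$ is path-connected. Finally, each $K\in\mathcal{S}$ lies in the sublevel set $\{f\le f(K)\}$, which is path-connected by the same argument and contains $K^\star$; thus every point of $\mathcal{S}$ is joined to $K^\star$, so $\mathcal{S}$ is path-connected, and therefore connected.

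The main obstacle is the coercivity at the stability boundary: one must rigorously control the blow-up of the Lyapunov solution $X$ as $A_K$ loses stability and transfer it to $f(K)={\rm Tr}(X\Sigma)$, since this is precisely what confines the gradient flow to $\mathcal{S}_0$ and underlies every later step. A secondary subtlety is justifying that the flow converges to $K^\star$ rather than merely to the stationary set, for which the uniqueness of the critical point and the PL inequality are both essential.
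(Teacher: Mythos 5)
You should note at the outset that the paper does not prove this lemma at all: it is imported verbatim from \cite{ref2}, so there is no in-paper argument to compare against, and your proposal has to be judged against the proof in that reference. There the argument is structural rather than dynamical: for $C=I$ the set of stabilizing gains (and each sublevel set) is exhibited as the continuous image of a convex, LMI-described set via the classical change of variables $(Y,L)\mapsto K=LY^{-1}$ with $Y\succ 0$, under which stabilization becomes a linear matrix inequality and the cost bound becomes a jointly convex (matrix-fractional) constraint; connectedness is then immediate. Your gradient-flow route is genuinely different and, as far as I can verify, correct: coercivity makes each sublevel set compact and uniformly inside $\mathcal{S}$, so the flow $\dot K=-\nabla f(K)$ exists for all time and stays in the sublevel set; stationarity is equivalent to $K=R^{-1}B^{\rm T}X_K$ because $Y_K\succ 0$ can be cancelled in $\nabla f(K)=2(RK-B^{\rm T}X_K)Y_K$, which forces $X_K$ to be the unique stabilizing ARE solution and hence gives a unique critical point $K^\star$; every trajectory then converges to $K^\star$, yielding path-connectedness of each sublevel set and, by exhaustion (each $K\in\mathcal{S}$ lies in its own sublevel set containing $K^\star$), of $\mathcal{S}$. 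Your approach buys a stronger conclusion (explicit paths, path-connectedness) and recycles machinery the paper needs anyway; the change-of-variables proof buys logical economy, needing neither coercivity, nor the gradient formula, nor the PL inequality, which is precisely why the lemma can be stated in the paper before any of those results.

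Two caveats you should make explicit. First, every ingredient you invoke (coercivity, the gradient formula, the PL inequality) appears in this paper only \emph{after} Lemma \ref{lemma2} and comes from the same source \cite{ref2}; this is not circular, since those facts are established by pointwise Lyapunov-equation manipulations that never use connectedness, but a careful write-up must say so. Second, the PL inequality is not actually needed for $K(t)\to K^\star$: once the trajectory is trapped in a compact subset of $\mathcal{S}$ and the critical point is unique, LaSalle's invariance principle already gives convergence; PL only upgrades it to an exponential rate.
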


As local search methods,  gradient-based optimization algorithms typically cannot jump between different connected components of feasible set for seeking the optimal solution \cite{ref4}. Consequently, for gradient-based optimization algorithms, the connectedness of feasible set is crucial. Unfortunately, the connectedness mentioned in Lemma \ref{lemma2} will no longer hold for the OLQR issue. Under some additional presumptions, the number of components will grow exponentially with the expansion of state space's dimension \cite{ref3}.

\begin{definition}
We call a function $f(K)$
  \begin{enumerate}
    \item is coercive on $\mathcal{S}$, if for any sequence $\{K_j\}_{j=1}^\infty\subset\mathcal{S}$ we have
    $$f(K_j)\to +\infty$$
    if $\Vert K_j\Vert\to +\infty$ or $K_j\to K\in\partial\mathcal{S}$.
    \item satisfies the PL condition, if it is continuously differentiable and satisfies
    $$\mu(f(K)-f(K^*))\leq\frac{1}{2}\Vert \nabla f(K)\Vert_F^2,\quad\forall K\in\mathcal{S},$$
    where $\mu$ is some positive constant, and $K^*$ is an optimal solution of $f(K)$  over $\mathcal{S}$.
    \item is $L$-smooth, if its gradient satisfies Lipschitz condition with constant $L$.
  \end{enumerate}
\end{definition}
\begin{theorem}
  The LQR cost $f(K)$ is real analytical and hence twice continuously differentiable. Moreover, the gradient of $f(K)$ is
  \begin{equation*}
    \nabla f(K)=2(RKC-B^{\rm T}X)YC^{\rm T},
  \end{equation*}
  where $Y$ is the solution to the Lyapunov matrix equation
  \begin{equation*}
    A_KY+YA_K^{\rm T}+\Sigma=0.
  \end{equation*}
  The Hessian of $f(K)$ is
  \begin{equation*}
    \frac{1}{2}\nabla^2 f(K)[E,E]=\langle(REC-B^{\rm T}X')YC^{\rm T},E\rangle+\langle MY'C^{\rm T},E\rangle,
  \end{equation*}
  where $M=RKC-B^{\rm T}X$, and $X',Y'$ are the solutions to the following Lyapunov matrix equations
  \begin{align*}
  &A_K^{\rm T}X'+X'A_K+M^{\rm T}EC+(M^{\rm T}EC)^{\rm T}=0,\\
  &A_KY'+Y'A_K^{\rm T}-(BECY+(BECY)^{\rm T})=0.
  \end{align*}
\end{theorem}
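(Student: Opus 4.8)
The plan is to establish the three claims in sequence: first real analyticity (hence $C^2$), then the gradient, then the Hessian, with the latter two obtained by differentiating the defining Lyapunov equations along a fixed direction $E$ and eliminating the implicit derivative of $X$ through an adjoint identity.

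For real analyticity, I would vectorize the constraint equation $A_K^{\rm T}X+XA_K+C^{\rm T}K^{\rm T}RKC+Q=0$ as $(I\otimes A_K^{\rm T}+A_K^{\rm T}\otimes I)\,{\rm vec}(X)=-{\rm vec}(C^{\rm T}K^{\rm T}RKC+Q)$. On $\mathcal{S}$ the matrix $A_K$ is Hurwitz, so the Kronecker-sum operator has eigenvalues $\lambda_i(A_K)+\lambda_j(A_K)$ all with strictly negative real part and is therefore invertible. Since both the operator and the right-hand side are polynomial (hence real-analytic) in the entries of $K$, and the inverse of an invertible real-analytic matrix function is real-analytic (by Cramer's rule, the determinant being nonvanishing on $\mathcal{S}$), ${\rm vec}(X)$ and thus $f(K)={\rm Tr}(X\Sigma)$ are real-analytic on $\mathcal{S}$; in particular $f$ is twice continuously differentiable.

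For the gradient, perturbing $K\mapsto K+tE$ gives $A_{K+tE}=A_K-tBEC$. Differentiating the constraint at $t=0$ and writing $X'=\tfrac{d}{dt}X(K+tE)|_{t=0}$ yields a Lyapunov equation for $X'$ whose inhomogeneous term is $-(C^{\rm T}E^{\rm T}B^{\rm T}X+XBEC)+C^{\rm T}E^{\rm T}RKC+C^{\rm T}K^{\rm T}REC$. Rather than solving this, I would invoke the adjoint $Y$ (defined by $A_KY+YA_K^{\rm T}+\Sigma=0$) and compute $Df(K)[E]={\rm Tr}(X'\Sigma)=-{\rm Tr}\big(X'(A_KY+YA_K^{\rm T})\big)=-{\rm Tr}\big((A_K^{\rm T}X'+X'A_K)Y\big)$ after cyclic rearrangement. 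Substituting the inhomogeneous term and collecting via the pairing $\langle G,E\rangle={\rm Tr}(G^{\rm T}E)$ (using $X=X^{\rm T}$, $Y=Y^{\rm T}$, $R=R^{\rm T}$) produces $\nabla f(K)=2(RKC-B^{\rm T}X)YC^{\rm T}$. For the Hessian, with $M=RKC-B^{\rm T}X$ one has $Df(K)[E]=2\langle MYC^{\rm T},E\rangle$, so $\tfrac12\nabla^2 f(K)[E,E]$ is the directional derivative of $\langle MYC^{\rm T},E\rangle$ along $E$ with the test direction $E$ held fixed. The product rule gives the two stated terms with $DM[E]=REC-B^{\rm T}X'$ and $DY[E]=Y'$, and it remains only to identify $X'$ and $Y'$: regrouping the inhomogeneous term of the $X'$-equation into the symmetric form $M^{\rm T}EC+(M^{\rm T}EC)^{\rm T}$ yields the stated equation for $X'$, while differentiating $A_KY+YA_K^{\rm T}+\Sigma=0$ (with $\Sigma$ constant) yields $A_KY'+Y'A_K^{\rm T}-(BECY+(BECY)^{\rm T})=0$.

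I expect the main obstacle to be the trace-and-transpose bookkeeping: verifying that the inhomogeneous term of the $X'$-equation regroups exactly into $M^{\rm T}EC+(M^{\rm T}EC)^{\rm T}$, and confirming that each of the four traces in the gradient step collapses to $2(RKC-B^{\rm T}X)YC^{\rm T}$ after the cyclic property and the symmetry relations. The conceptual crux, however, is the adjoint identity: it replaces the equation-defined derivative $X'$ in the gradient computation by the readily available Gramian $Y$, and the same mechanism structures the Hessian into the two displayed inner products.
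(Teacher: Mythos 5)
Your proposal is correct, and it is in fact more self-contained than the paper's own proof. For the real-analyticity claim---the only part the paper actually proves---your argument is essentially identical to the paper's: vectorize the Lyapunov equation, note that the Kronecker sum $I_n\otimes A_K^{\rm T}+A_K^{\rm T}\otimes I_n$ is invertible because $A_K$ is Hurwitz, and conclude via Cramer's rule that ${\rm vec}(X)$, hence $f(K)={\rm Tr}(X\Sigma)$, is a rational (so real-analytic) function of the entries of $K$. For the gradient and Hessian formulas, however, the paper gives no derivation at all: it simply cites \cite{ref2}. Your adjoint--Gramian derivation fills this gap correctly: differentiating the constraint gives the $X'$-equation whose inhomogeneous term indeed regroups as $M^{\rm T}EC+(M^{\rm T}EC)^{\rm T}$, since $C^{\rm T}E^{\rm T}RKC-C^{\rm T}E^{\rm T}B^{\rm T}X=(M^{\rm T}EC)^{\rm T}$ and $C^{\rm T}K^{\rm T}REC-XBEC=M^{\rm T}EC$; substituting $\Sigma=-(A_KY+YA_K^{\rm T})$ into ${\rm Tr}(X'\Sigma)$, invoking the $X'$-equation and the symmetry of $Y$, collapses the trace to $2\langle MYC^{\rm T},E\rangle$, giving $\nabla f(K)=2(RKC-B^{\rm T}X)YC^{\rm T}$; and the product rule with $DM[E]=REC-B^{\rm T}X'$ and $DY[E]=Y'$ yields exactly the two stated Hessian terms, where $Y'$ solves the equation obtained by differentiating $A_KY+YA_K^{\rm T}+\Sigma=0$. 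What your route buys is a verifiable, reference-free proof of all three claims; what the paper's route buys is brevity, at the cost of outsourcing the two formulas to the cited work.
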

\begin{proof}
The gradient and Hessian of $f(K)$ can be found in \cite{ref2}, and here we only provide the proof of real analytical property. First, upon vectorizing the Lyapunov equation
$$A_K^{\rm T}X+XA_K+C^{\rm T}K^{\rm T}RKC+Q=0$$
with $A_K=A-BKC$, we have
\begin{equation*}
(I_n\otimes A_K^{\rm T}){\rm vec}(X)+(A_K^{\rm T}\otimes I_n){\rm vec(X)}=-{\rm vec}(C^{\rm T}K^{\rm T}RKC+Q).
\end{equation*}
Since $A_K$ is stable, we know that $I_n\otimes A_K^{\rm T}+A_K^{\rm T}\otimes I_n$ is invertible, and thus we have
$${\rm vec}(X)=-(I_n\otimes A_K^{\rm T}+A_K^{\rm T}\otimes I_n)^{-1}{\rm vec}(C^{\rm T}K^{\rm T}RKC+Q).$$
It is obvious that each element of $(I_n\otimes A_K^{\rm T}+A_K^{\rm T}\otimes I_n)^{-1}$ is a rational function of element $K$. Therefore, the objective function $f(K)={\rm Tr}(X\Sigma)$ is a rational
function of element $K$, which is real analytical indeed.
\end{proof}

The following theorem adopted from \cite{ref2} is on the $L$-smoothness and PL condition of $f(K)$, which is rewritten here for the convenience of subsequent discussions.

\begin{theorem}[\cite{ref2}]
	
	The following assertions hold.
	
  \begin{enumerate}
    \item Function $f(K)={\rm Tr}(X\Sigma)$ is coercive and the following estimates hold
    \begin{align*}
    f(K)&\geq \frac{\lambda_1(\Sigma)\lambda_1(Q)}{-2\Re\lambda_n(A_K)},\\
    f(K)&\geq \frac{\lambda_1(\Sigma)\lambda_1(R)\Vert K\Vert_F^2\lambda_1(CC^{\rm T})}{2\Vert A\Vert+2\Vert K\Vert_F\Vert B\Vert\Vert C\Vert}.
    \end{align*}
    \item On the sublevel set $\mathcal{S}_0$, function $f(K)$ is $L(f(K_0))$-smooth with constant
    \begin{equation*}
    \begin{aligned}
      L(f(K_0))=&\dfrac{2f(K_0)}{\lambda_1(Q)}(\lambda_n(R)\Vert C\Vert^2+\Vert B\Vert\Vert C\Vert_F\xi(f(K_0)))
      \end{aligned}
    \end{equation*}
    related to the initial feedback gain matrix $K_0$, where
    \begin{equation*}
      \begin{split}
         \xi(f(K_0))=\frac{\sqrt{n}f(K_0)}{\lambda_1(\Sigma)}\left(\frac{f(K_0)\Vert B\Vert}{\lambda_1(\Sigma)\lambda_1(Q)}+\sqrt{\left(\frac{f(K_0)\Vert B\Vert}{\lambda_1(\Sigma)\lambda_1(Q)}\right)^2+\lambda_n(R)}\right).
      \end{split}
    \end{equation*}
    \item When $C=I$, function $f(K)$ satisfies the PL condition on the set $\mathcal{S}_0$
    \begin{equation*}
    \frac{1}{2}\Vert \nabla f(K)\Vert_F^2\geq\mu(f(K_0))(f(K)-f(K^*)),
    \end{equation*}
    where $\mu(f(K_0))>0$ is a constant related to the initial feedback gain matrix $K_0$ given by
    $$\mu(f(K_0))=\dfrac{\lambda_1(R)\lambda_1^2(\Sigma)\lambda_1(Q)}{8f(K^*)\left(\Vert A\Vert+\frac{\Vert B\Vert^2f(K_0)}{\lambda_1(\Sigma)\lambda_1(R)}\right)^2}.$$
  \end{enumerate}
\end{theorem}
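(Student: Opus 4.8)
The plan is to reduce every assertion to norm estimates on the Lyapunov matrices $X$, $Y$ and on $K$, $A_K$, all of which I would control through the integral representations
\begin{equation*}
X=\int_0^\infty e^{A_K^{\rm T}t}\bigl(Q+C^{\rm T}K^{\rm T}RKC\bigr)e^{A_Kt}\,{\rm d}t,\qquad Y=\int_0^\infty e^{A_Kt}\,\Sigma\,e^{A_K^{\rm T}t}\,{\rm d}t,
\end{equation*}
which are legitimate precisely because $K\in\mathcal S$ renders $A_K$ Hurwitz. Two elementary estimates will be used repeatedly: the spectral decay bound $\|e^{A_Kt}\|_F^2\ge\sum_i e^{2\Re\lambda_i(A_K)t}\ge e^{2\Re\lambda_n(A_K)t}$ (the squared singular values dominate the squared eigenvalue moduli), and the energy lower bound $\int_0^\infty\|e^{A_Kt}v\|^2\,{\rm d}t\ge\|v\|^2/(2\|A_K\|)$, which follows from $\tfrac{\rm d}{{\rm d}t}\|x\|^2=2x^{\rm T}A_Kx\ge-2\|A_K\|\,\|x\|^2$. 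A last recurring fact is the dual cost representation $f(K)={\rm Tr}\bigl((Q+C^{\rm T}K^{\rm T}RKC)Y\bigr)$, obtained by cycling the trace in ${\rm Tr}(X\Sigma)$.

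\textbf{Part 1 (coercivity).} Dropping the control term gives $X\succeq\int_0^\infty e^{A_K^{\rm T}t}Qe^{A_Kt}\,{\rm d}t$, whence $f(K)={\rm Tr}(X\Sigma)\ge\lambda_1(\Sigma)\lambda_1(Q)\int_0^\infty\|e^{A_Kt}\|_F^2\,{\rm d}t\ge\lambda_1(\Sigma)\lambda_1(Q)/(-2\Re\lambda_n(A_K))$ by the decay bound; this is the first estimate and forces $f\to+\infty$ as $K\to\partial\mathcal S$. For the second estimate I keep only the control term, $X\succeq\int_0^\infty e^{A_K^{\rm T}t}C^{\rm T}K^{\rm T}RKCe^{A_Kt}\,{\rm d}t$, and use ${\rm Tr}(AB)\ge\lambda_1(A){\rm Tr}(B)$ for positive semidefinite $A,B$ to pull out $\lambda_1(\Sigma)$ and $\lambda_1(R)$; crucially I collect the whole time integral into the single Gramian $\int_0^\infty e^{A_Kt}e^{A_K^{\rm T}t}\,{\rm d}t\succeq(2\|A_K\|)^{-1}I$ before extracting $\lambda_1(CC^{\rm T})$, since $\lambda_{\min}$ is concave and the naive exchange would go the wrong way. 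This yields $f(K)\ge\lambda_1(\Sigma)\lambda_1(R)\lambda_1(CC^{\rm T})\|K\|_F^2/(2\|A_K\|)$, and $\|A_K\|\le\|A\|+\|K\|_F\|B\|\|C\|$ produces exactly the stated denominator, forcing $f\to+\infty$ as $\|K\|\to\infty$.

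\textbf{Part 2 ($L$-smoothness) --- the main obstacle.} Here I would bound the Hessian form uniformly on $\mathcal S_0$ using the expression from the preceding theorem. The easy ingredients are $\|X\|\le f(K_0)/\lambda_1(\Sigma)$ and $\|Y\|\le f(K_0)/\lambda_1(Q)$, read off from ${\rm Tr}(X\Sigma)\ge\lambda_1(\Sigma)\|X\|$ and the dual representation. The delicate step is estimating the auxiliary solutions $X',Y'$: writing them through their own integral representations shows $\|X'\|,\|Y'\|\lesssim\|E\|_F$ with constants governed by the observability-type Gramian $G=\int_0^\infty e^{A_K^{\rm T}t}e^{A_Kt}\,{\rm d}t$, which I bound via the comparison $X\succeq\lambda_1(Q)G$ to get $\|G\|\le\|X\|/\lambda_1(Q)\le f(K_0)/(\lambda_1(Q)\lambda_1(\Sigma))$. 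Feeding these into $\tfrac12\nabla^2f(K)[E,E]=\langle(REC-B^{\rm T}X')YC^{\rm T},E\rangle+\langle MY'C^{\rm T},E\rangle$ and separating the $\lambda_n(R)\|C\|^2$ contribution from the $\|B\|\|C\|_F\|M\|$ contribution produces the factor $2f(K_0)/\lambda_1(Q)$ in front and the quantity $\xi$ inside. The function $\xi(f(K_0))$ is precisely the bound obtained on $\mathcal S_0$ for the feedback-dependent matrix $M=RKC-B^{\rm T}X$ weighted by the Gramian, namely the larger root of the quadratic inequality furnished by the coercivity estimate; checking that all dimension factors assemble into the displayed constant is the part demanding the most care.

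\textbf{Part 3 (PL condition, $C=I$).} With $C=I$ write $E_K=RK-B^{\rm T}X$, so $\nabla f(K)=2E_KY$, and $K^*=R^{-1}B^{\rm T}X^*$ makes $E_{K^*}=0$. The core is a gradient-domination identity obtained from the cost-difference formula $f(K)-f(K^*)={\rm Tr}\bigl((-\Delta^{\rm T}R\Delta-\Delta^{\rm T}E_K-E_K^{\rm T}\Delta)Y_{K^*}\bigr)$ with $\Delta=K^*-K$, derived by substituting the two Lyapunov equations and using $A_K-A_{K^*}=-B\Delta$. Completing the square in $\Delta$ and discarding the nonpositive remainder yields $f(K)-f(K^*)\le\lambda_n(Y_{K^*})\|E_K\|_F^2/\lambda_1(R)$. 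Combining this with $\|\nabla f(K)\|_F\ge2\lambda_1(Y)\|E_K\|_F$, the lower bound $\lambda_1(Y)\ge\lambda_1(\Sigma)/(2\|A_K\|)$, the upper bound $\lambda_n(Y_{K^*})\le f(K^*)/\lambda_1(Q)$, and the $\mathcal S_0$-bound $\|A_K\|\le\|A\|+\|B\|^2f(K_0)/(\lambda_1(\Sigma)\lambda_1(R))$ gives the PL inequality with the stated $\mu(f(K_0))$, the completing-the-square identity being the one genuinely nonroutine step.
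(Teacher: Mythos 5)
First, a point of order: the paper itself does not prove this theorem at all — it is imported from \cite{ref2} (``adopted from \cite{ref2} \dots rewritten here for the convenience of subsequent discussions''), so there is no in-paper proof to compare against and your proposal must be judged on its own merits. On those merits, Part 1 is correct and complete: the integral representation of $X$, the Schur-type bound $\Vert e^{A_Kt}\Vert_F^2\ge e^{2\Re\lambda_n(A_K)t}$, the Gramian bound $\int_0^\infty e^{A_Kt}e^{A_K^{\rm T}t}\,{\rm d}t\succeq(2\Vert A_K\Vert)^{-1}I$, and their assembly into the two displayed estimates all check out. Part 3 also has the right skeleton: the cost-difference identity you state is correct (subtract the two Lyapunov equations and integrate against $Y_{K^*}$), and the steps $\Vert\nabla f(K)\Vert_F\ge 2\lambda_1(Y)\Vert E_K\Vert_F$, $\lambda_1(Y)\ge\lambda_1(\Sigma)/(2\Vert A_K\Vert)$, $\lambda_n(Y_{K^*})\le f(K^*)/\lambda_1(Q)$ and the completion of the square are all valid. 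However, your last ingredient, $\Vert A_K\Vert\le\Vert A\Vert+\Vert B\Vert^2f(K_0)/(\lambda_1(\Sigma)\lambda_1(R))$ on $\mathcal{S}_0$, is false: what actually follows (Lemma C.3 of \cite{ref2}, quoted in the paper's appendix) is $\Vert K\Vert_F\le\frac{2\Vert B\Vert f(K_0)}{\lambda_1(\Sigma)\lambda_1(R)}+\frac{\Vert A\Vert}{\Vert B\Vert}$, hence $\Vert A_K\Vert\le 2\bigl(\Vert A\Vert+\frac{\Vert B\Vert^2 f(K_0)}{\lambda_1(\Sigma)\lambda_1(R)}\bigr)$; a scalar example ($A=-a$, $B=C=Q=R=\Sigma=1$, $K=a^{-1/2}$, $a$ large) kills your version. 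With your bound the chain would produce four times the stated $\mu$, contradicting your own claim to recover it; with the corrected factor of $2$, whose square is exactly the source of the $8$ in the denominator, your chain lands precisely on the stated $\mu(f(K_0))$. So Part 3 is a fixable constant slip rather than a structural flaw.

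The genuine gap is Part 2. What you give there is a plan, not a proof: the two steps that carry all of the content — (i) quantitative bounds on the directional derivatives $X'$, $Y'$ of the form $\Vert X'\Vert,\Vert Y'\Vert\lesssim\Vert E\Vert_F$ with explicit constants, and (ii) the identification of $\xi(f(K_0))$ as a bound over $\mathcal{S}_0$ on the relevant weighted norm of $M=RKC-B^{\rm T}X$, ``the larger root of the quadratic inequality'' — are asserted but never derived, and you explicitly defer ``checking that all dimension factors assemble into the displayed constant.'' Since assertion 2 \emph{is} nothing more than the explicit value of $L(f(K_0))$, deferring that assembly means the assertion is not proved: you never exhibit the quadratic inequality whose larger root is $\xi$, never account for the $\sqrt{n}$ and $\Vert C\Vert_F$ factors, and never show how the two inner-product terms of the Hessian combine into $\frac{2f(K_0)}{\lambda_1(Q)}\bigl(\lambda_n(R)\Vert C\Vert^2+\Vert B\Vert\Vert C\Vert_F\,\xi(f(K_0))\bigr)$. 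Until that computation is carried out, the smoothness claim — the one part of the theorem on which the whole SLQR analysis of the paper leans through $C(\alpha_0)$ — remains unestablished in your write-up.
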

\begin{theorem}[$M$-Lipschitz Hessian]\label{LipschizHessian}
  For any $K,K'\in\mathcal{S}_\alpha=\{K\in\mathcal{S}\colon f(K)\leq\alpha\}$ satisfying $\bar{K}\doteq K+\delta(K'-K)\in\mathcal{S}_\alpha,\forall \delta\in[0,1]$, then the following inequality holds
  $$\Vert\nabla^2f({\rm vec}(K))-\nabla^2f({\rm vec}(K'))\Vert\leq M\Vert K-K'\Vert,$$
  where $M$ is given by
  \begin{equation*}
  M=2\Vert B\Vert\Vert C\Vert\frac{\alpha^2}{\lambda_1(Q)\lambda_1(\Sigma)}(2\kappa_3+\kappa_4),
  \end{equation*}
  and $\kappa_i,i=1,2,3,4$ are defined as
  \begin{align*}
  \kappa_1&=\frac{2}{\lambda_1(Q)}\left(\Vert B\Vert\Vert C\Vert\frac{\alpha}{\lambda_1(\Sigma)}+\Vert C\Vert^2\Vert R\Vert \zeta\right),\\
  \kappa_2&=\frac{2}{\lambda_1(Q)}\left(\Vert B\Vert\Vert C\Vert\frac{\alpha}{\lambda_1(\Sigma)}+\Vert C\Vert^2\Vert R\Vert\zeta\right),\\
  \kappa_3&=\frac{2}{\lambda_1(Q)}\left(\kappa_1\Vert B\Vert\Vert C\Vert\frac{\alpha}{\lambda_1(\Sigma)}+\kappa_2\Vert B\Vert\Vert C\Vert\frac{\alpha}{\lambda_1(\Sigma)}+\Vert C\Vert^2\Vert R\Vert\right),\\
  \kappa_4&=\frac{2}{\lambda_1(Q)}\left(2\kappa_2\Vert B\Vert\Vert C\Vert\frac{\alpha}{\lambda_1(\Sigma)}+\Vert C\Vert^2\Vert R\Vert\right),
  \end{align*}
with $\zeta=\frac{2\Vert B\Vert \alpha}{\lambda_1(\Sigma)\lambda_1(R)}+\frac{\Vert A\Vert}{\Vert B\Vert}$.

\end{theorem}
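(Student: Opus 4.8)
The plan is to deduce the Lipschitz bound from a uniform estimate on the third derivative of $f$ along the segment. Since the whole segment $\bar K(\delta)=K+\delta(K'-K)$ lies in $\mathcal{S}_\alpha$ and $f$ is real-analytic there, the map $\delta\mapsto\nabla^2 f(\bar K(\delta))$ is $C^1$, so by the fundamental theorem of calculus
\[
\nabla^2 f({\rm vec}(K))-\nabla^2 f({\rm vec}(K'))=-\int_0^1\frac{d}{d\delta}\nabla^2 f(\bar K(\delta))\,d\delta .
\]
Because the Hessian is a symmetric bilinear form, its operator norm equals $\sup_{\|E\|_F=1}|\nabla^2 f[E,E]|$, so it suffices to bound the trilinear form $\nabla^3 f(\bar K)[E,E,E']$ with $E'=K'-K$ by $M\|E\|_F^2\|E'\|_F$ uniformly along the segment. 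First I would differentiate the Hessian expression $\frac12\nabla^2 f(K)[E,E]=\langle(REC-B^{\rm T}X')YC^{\rm T},E\rangle+\langle MY'C^{\rm T},E\rangle$ in the direction $E'$, keeping the test direction $E$ fixed.

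The analytic heart of the argument is a single uniform estimate on Lyapunov solution operators over $\mathcal{S}_\alpha$. From $f(K)={\rm Tr}(X\Sigma)\ge\lambda_1(\Sigma)\,{\rm Tr}(X)$ and $f(K)={\rm Tr}((Q+C^{\rm T}K^{\rm T}RKC)Y)\ge\lambda_1(Q)\,{\rm Tr}(Y)$ I obtain the a priori bounds $\|X\|\le\alpha/\lambda_1(\Sigma)$, $\|Y\|\le\alpha/\lambda_1(Q)$, while the coercivity estimate of the preceding theorem gives $\|K\|\le\zeta$. Writing $Y=\int_0^\infty e^{A_K t}\Sigma e^{A_K^{\rm T}t}\,dt\succeq\lambda_1(\Sigma)\int_0^\infty e^{A_K t}e^{A_K^{\rm T}t}\,dt$ and taking traces yields the key bound
\[
\int_0^\infty\|e^{A_K t}\|^2\,dt\le\frac{{\rm Tr}(Y)}{\lambda_1(\Sigma)}\le\frac{\alpha}{\lambda_1(Q)\lambda_1(\Sigma)},
\]
so every Lyapunov equation $A_K^{\rm T}P+PA_K+W=0$ (and its dual) satisfies $\|P\|\le\frac{\alpha}{\lambda_1(Q)\lambda_1(\Sigma)}\|W\|$. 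Applying this to the defining equations of $X'$ and $Y'$, whose forcings are bounded by $2\|M\|\|C\|\|E\|$ and $2\|B\|\|C\|\|Y\|\|E\|$ respectively, and inserting $\|M\|\le\|R\|\|C\|\|K\|+\|B\|\|X\|$, produces exactly the normalized bounds $\|X'\|\le\kappa_1\frac{\alpha}{\lambda_1(\Sigma)}\|E\|$ and $\|Y'\|\le\kappa_2\frac{\alpha}{\lambda_1(\Sigma)}\|E\|$.

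The differentiation then exploits a self-similar structure that I would make explicit at the outset: differentiating the Lyapunov equations for $X$ and $Y$ in the direction $E'$ shows that the directional derivatives satisfy precisely the $X'$- and $Y'$-equations with $E$ replaced by $E'$, i.e.\ $\dot X=X'|_{E'}$, $\dot Y=Y'|_{E'}$, and hence $\dot M=RE'C-B^{\rm T}X'|_{E'}$, all already controlled by $\kappa_1,\kappa_2$. The genuinely new objects are $\dot X'$ and $\dot Y'$: differentiating their Lyapunov equations gives new equations whose forcings combine a term of type $C^{\rm T}E'^{\rm T}B^{\rm T}X'+X'BE'C$ (coming from $\dot A_K=-BE'C$) with a term of type $\dot M^{\rm T}EC+(\dot M^{\rm T}EC)^{\rm T}$. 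Bounding these forcings with the first-order bounds $\kappa_1,\kappa_2$ and applying the operator estimate once more yields the second-order constants $\kappa_3,\kappa_4$, built hierarchically from $\kappa_1,\kappa_2$. Substituting $\dot X',\dot Y',\dot M,\dot Y$ into the four terms of $\frac12\frac{d}{d\delta}\nabla^2 f[E,E]$, estimating each by Cauchy--Schwarz together with the bounds on $\|Y\|,\|M\|,\|X'\|,\|Y'\|$, and collecting constants, gives the stated $M=2\|B\|\|C\|\frac{\alpha^2}{\lambda_1(Q)\lambda_1(\Sigma)}(2\kappa_3+\kappa_4)$.

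I expect the main obstacle to be twofold. Analytically, everything rests on the uniform Lyapunov operator bound $\int_0^\infty\|e^{A_K t}\|^2\,dt\le\alpha/(\lambda_1(Q)\lambda_1(\Sigma))$ holding \emph{uniformly} along the entire segment; this is exactly what the hypothesis $\bar K(\delta)\in\mathcal{S}_\alpha$ for all $\delta$ is there to guarantee, and without it the auxiliary solutions $X',Y',\dot X',\dot Y'$ need not even exist. Combinatorially, the difficulty is the nested differentiation: because $X'$ and $Y'$ depend on $K$ through both $A_K$ and $M$ (which itself contains $X$), their derivatives regenerate the lower-order quantities $X'|_{E'},Y'|_{E'}$, so the bookkeeping must be organized hierarchically and each forcing term matched to the correct a priori bound in order to arrive at the precise constants $\kappa_3,\kappa_4$.
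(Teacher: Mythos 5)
Your overall strategy is the paper's: reduce via the fundamental theorem of calculus to bounding $g'(\delta)=\frac{d}{d\delta}\nabla^2f(\bar K(\delta))[E,E]$ uniformly along the segment, use the a priori bounds $\Vert X\Vert\le\alpha/\lambda_1(\Sigma)$, ${\rm Tr}(Y)\le\alpha/\lambda_1(Q)$, $\Vert K\Vert_F\le\zeta$, and control the derivatives of Lyapunov solutions hierarchically. Your operator estimate $\int_0^\infty\Vert e^{A_Kt}\Vert^2{\rm d}t\le\alpha/(\lambda_1(Q)\lambda_1(\Sigma))$ is correct and is exactly equivalent to the paper's device of bounding each forcing $W$ by $\frac{\Vert W\Vert}{\lambda_1(Q)}Q\preceq\frac{\Vert W\Vert}{\lambda_1(Q)}(Q+C^{\rm T}\bar K^{\rm T}R\bar KC)$ and comparing with $X_{\bar K}$; for the state-side quantities $\partial_\delta X$, $X'_{\bar K}[E]$, $\partial_\delta X'_{\bar K}[E]$, $X''_{\bar K}[E]$ it reproduces the paper's constants $\kappa_1,\dots,\kappa_4$ exactly.

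The gap is in the final assembly. You choose to differentiate the representation $\frac12\nabla^2f[E,E]=\langle(REC-B^{\rm T}X')YC^{\rm T},E\rangle+\langle MY'C^{\rm T},E\rangle$, which forces you to bound the output-side derivatives $\dot Y$, $Y'$, $\dot Y'$ as well, and these do not obey the bounds you assert. The forcing of the $Y'$ equation is $BECY+(BECY)^{\rm T}$, whose norm involves $\Vert Y\Vert\le\alpha/\lambda_1(Q)$, so the operator estimate gives $\Vert Y'\Vert\le 2\Vert B\Vert\Vert C\Vert\alpha^2/(\lambda_1(Q)^2\lambda_1(\Sigma))$ --- not $\kappa_2\frac{\alpha}{\lambda_1(\Sigma)}$, which contains the unrelated term $\Vert C\Vert^2\Vert R\Vert\zeta$ and has $\lambda_1(\Sigma)$ where this bound has $\lambda_1(Q)$; the analogous mismatch occurs for $\dot Y$ and $\dot Y'$. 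Consequently your four-term estimate does produce \emph{some} explicit Lipschitz constant, but ``collecting constants'' does not give the stated $M=2\Vert B\Vert\Vert C\Vert\frac{\alpha^2}{\lambda_1(Q)\lambda_1(\Sigma)}(2\kappa_3+\kappa_4)$, so the theorem with that specific $M$ is not reached. The paper avoids $Y$-derivatives altogether: it writes $g(\delta)={\rm Tr}\bigl(\frac{{\rm d}^2}{{\rm d}\eta^2}\big|_{\eta=0}X_{\bar K+\eta E}\,\Sigma\bigr)$, expresses $\frac{\partial^3}{\partial\eta^2\partial\delta}X$ as a Lyapunov integral with forcing $\tilde S$, and then uses the adjoint identity ${\rm Tr}\bigl(\int_0^\infty e^{A_{\bar K}\tau}\tilde Se^{A_{\bar K}^{\rm T}\tau}{\rm d}\tau\,\Sigma\bigr)={\rm Tr}(\tilde SY_{\bar K})$, so only $Y_{\bar K}$ itself ever appears and $g'(\delta)$ collapses to the two terms $-4{\rm Tr}\bigl((BEC)^{\rm T}\partial_\delta X'_{\bar K}[E]\,Y_{\bar K}\bigr)-2{\rm Tr}\bigl((B\Delta KC)^{\rm T}X''_{\bar K}[E]\,Y_{\bar K}\bigr)$, whose estimates yield precisely the contributions $2\kappa_3$ and $\kappa_4$. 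If you reorganize your differentiation around this trace identity (differentiate $X$ in $\delta$ and $\eta$, trade $\Sigma$ for $Y_{\bar K}$ at the end), the rest of your argument goes through verbatim and recovers the stated constant.
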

\begin{proof}
The proof can be found in Section \ref{proof_of_hessian}.
\end{proof}

\begin{remark}
In addition to being a fundamental assumption of second-order optimization methods \cite{ref5}, the Lipschitz Hessian property introduced in Theorem \ref{LipschizHessian} also has a significant impact on first-order accelerated algorithms. This property has recently been demonstrated to be one of the fundamental properties that can guarantee that the Heavy-ball method holds an global accelerated convergence rate for convex problems \cite{ref6}.

\end{remark}

\section{Acceleration Optimization of SLQR Problem}

In this section, an accelerated optimization framework of SLQR problem is proposed. Specifically, we first review the structure of SLQR problem in Section \ref{section4.1}.
Then, a hybrid dynamic system is introduced in Section \ref{section4.2}, which combines the restarting method with widely known modified Heavy-ball flow. Moreover, in Section \ref{section4.3}, a discrete-time algorithm with Nesterov-optimal convergence rate is proposed that can be regarded as the discretization of the proposed hybrid dynamic system. To emphasis, the proposed restarting rule is new, which makes the convergence analysis of both the continuous-time and discrete-time algorithms differ from the one of pure modified Heavy-ball flow (see \cite{ref11}).

\subsection{Structure of SLQR Problem}\label{section4.1}

\begin{figure}[htbp]
\centerline{\includegraphics[width=0.5\textwidth]{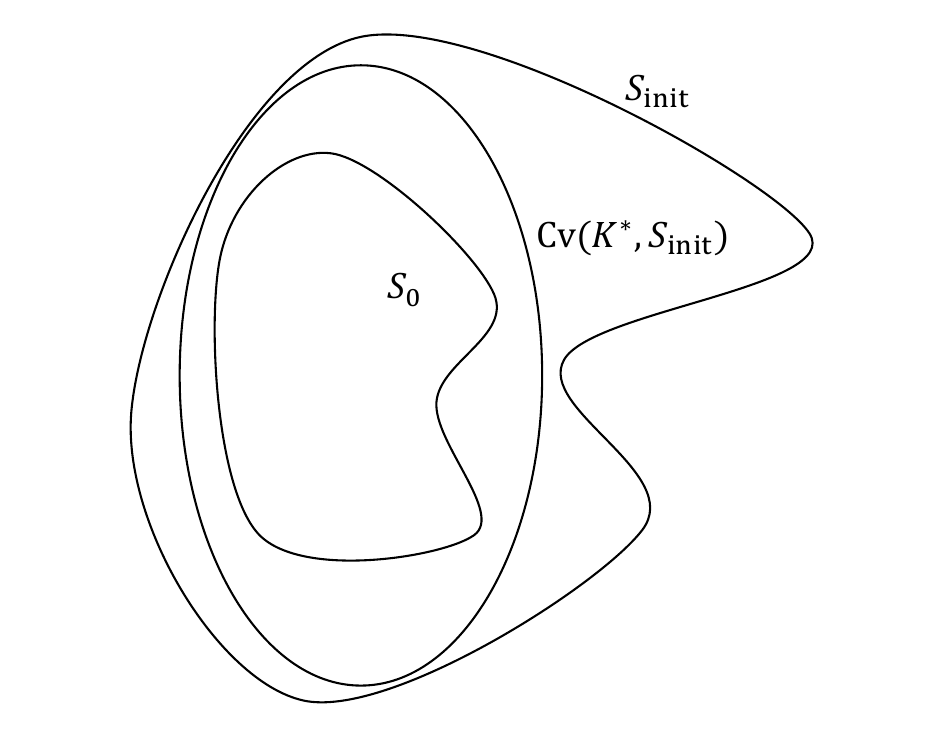}}
\caption{Structure of feasible set of SLQR problem}
\label{fig}
\end{figure}
In Figure \ref{fig}, $\mathcal{S}_{\rm init}$ is given by $\mathcal{S}_{\rm init}=\{K\in\mathcal{S}:f(K)\leq f(K_{\rm init})=\alpha_0\}$, where $K_{\rm init}$ is an initial feedback gain matrix. ${\rm Cv}(K^*,\mathcal{S}_{\rm init})$ is a convex set which is contained in $\mathcal{S}_{\rm init}$ and contains optimal feedback gain $K^*$, at the meantime, we want it to be as large as feasible. $\mathcal{S}_0$ is given by $\mathcal{S}_0=\{K\in\mathcal{S}:f(K)\leq f(K_0)=\alpha_1\}$, where $K_0\in\mathcal{S}$ ensures that the sublevel set $\mathcal{S}_0$ is contained in the convex set ${\rm Cv}(K^*,\mathcal{S}_{\rm init})$.
By Assumption \ref{assumption1}, $K_0$ is available, and the gradient descent method proposed in \cite{ref2} can serve as the basis for the selection of $K_0$.

On $\mathcal{S}_0$, $f(K)$ is $C(\alpha_0)$-smoothness with $C(\alpha_0)$ given by
$$C(\alpha_0)=\frac{2\alpha_0}{\lambda_1(Q)}(\lambda_n(R)+\sqrt{n}\Vert B\Vert\xi(\alpha_0)),$$
where
\begin{align*}
\xi(\alpha_0)&=\frac{\sqrt{n}\alpha_0}{\lambda_1(\Sigma)}\left(\frac{\alpha_0\Vert B\Vert}{\lambda_1(Q)\lambda_1(\Sigma)}+\sqrt{\left(\frac{\alpha_0\Vert B\Vert}{\lambda_1(Q)\lambda_1(\Sigma)}\right)^2+\lambda_n(R)}\right).
\end{align*}
We here use notation $C(\alpha_0)$ to underline that the estimation of smoothness parameter index depends on the initial point value $\alpha_0\doteq f(K_{\rm init})$ only. Moreover, on ${\rm Cv}(K^*,\mathcal{S}_{\rm init})$, the Hessian of $f(K)$ is $M$-Lipschitz continuous with
$$M=2\Vert B\Vert\frac{\alpha_0^2}{\lambda_1(Q)\lambda_1(\Sigma)}(2\kappa_3+\kappa_4),$$
where $\kappa_3,\kappa_4$ are defined in Theorem \ref{LipschizHessian}. We denote
\begin{align*}
	\mu&=\min_{E\in\mathbb{R}^{m\times n},\Vert E\Vert_F=1}\nabla^2 f(K)[E,E]\bigg|_{K=K^*},\\
	L&=\max_{E\in\mathbb{R}^{m\times n},\Vert E\Vert_F=1}\nabla^2 f(K)[E,E]\bigg|_{K=K^*},
\end{align*}
and based on the smoothness of $f(K)$ we can show that
$$L\leq C(f(K^*)).$$
$f(K)$ satisfies the PL condition (with parameter $\mu(f(K_\chi))$) in any sublevel set $\mathcal{S}'=\{K\in\mathcal{S},f(K)\leq f(K_\chi)\}$ with $K_{\chi}\in\mathcal{S}$, i.e.,
$$\frac{1}{2}\Vert \nabla f(K)\Vert_F^2\geq \mu(f(K_\chi))(f(K)-f(K^*)),$$
where
$$\mu(f(K_\chi))=\frac{\lambda_1(R)\lambda_1(\Sigma)^2\lambda_1(Q)}{8f(K^*)\left(\Vert A\Vert+\frac{\Vert B\Vert^2f(K_\chi)}{\lambda_1(\Sigma)\lambda_1(R)}\right)^2}.$$
According to Theorem 2 of \cite{ref8}, it is shown that the PL condition implies that the function has quadratic growth, i.e.,
\begin{equation*}
	f(K)-f(K^*)\geq \frac{\mu(f(K_\chi))}{2}\Vert K-K^*\Vert_F^2.
\end{equation*}
Thus, we can easily demonstrate that
$$\mu\geq\mu(f(K^*)).$$
In this section and  without causing confusion, we denote
\begin{equation*}
	\mu=\mu(f(K^*)),~ L=C(f(K^*)),~ \kappa=\frac{L}{\mu}.
\end{equation*}

\subsection{Continuous-time Analysis}\label{section4.2}
\begin{definition}[Hybrid Dynamic System \cite{ref9}]\label{Defi-2}
  A time-invariant hybrid dynamic system ($\mathcal{H}$) comprises a ODE and a jump rule introduced as
  \begin{equation}\label{H0}
    \left\{
    \begin{aligned}
    \dot x&=F(x),~~ x\in\mathcal C,\\
    x^+&=G(x),~~\text{otherwise},\\
    \end{aligned}
    \right.
  \end{equation}
  where $x^+$ is the state after a jump; the function $F:\mathbb{R}^n\longmapsto\mathbb{R}^n$ is the flow map, and the set $\mathcal{C}\subset\mathbb{R}^n$ is the flow set, and the function $G:\partial\mathcal{C}\longmapsto {\rm int}(\mathcal{C})$ represents the jump map.
\end{definition}

In this section, we denote $z(t)=(K(t)^{\rm T},\dot{K}(t)^{\rm T})^{\rm T}\in\mathbb{R}^{2m\times n}$. The flow map $F:\mathbb{R}^{2m\times n}\longmapsto\mathbb{R}^{2m\times n}$  is given by
\begin{equation}\label{H1}
	F(z(t))=\begin{pmatrix}
		\dot{K}(t)\\
		-\nabla f(K(t))+f_d(K(t),\dot{K}(t))
	\end{pmatrix},
\end{equation}\label{H2}
with
$$f_d(q,p)=-2dp-(\nabla f(q+\beta p)-\nabla f(q)),$$
and the flow set $\mathcal{C}\subseteq \mathbb{R}^{2m\times n}$ is given by
\begin{equation}
	\mathcal{C}=\left\{z(t)\in\mathbb{R}^{2m\times n}\colon f(K(t))<f(K_0)\bigvee\frac{{\rm d}}{{\rm d}t}f(K(t))<0\right\},
\end{equation}
where $K_0$ is defined in Section \ref{section4.1}. The jump map $G:\mathbb{R}^{2m\times n}\longmapsto \mathbb{R}^{2m\times n}$ parameterized by a constant $\eta$ is given by
\begin{equation}\label{H3}
	G(z)=
	\begin{pmatrix}
		K(t)\\
		-\eta\nabla f(K(t))
	\end{pmatrix}.
\end{equation}
Here, we have so far succeeded in creating a complete hybrid dynamic system and we will then show the hybrid dynamic system with parameters (\ref{H1})-(\ref{H3}) is well-defined.

\begin{remark}\label{history}
Continuous-time dynamic system $\dot{x}=F(x)$ with $F$ given by (\ref{H1}), known as the modified Heavy-ball system, is first introduced in \cite{ref11} and has been widely studied by many scholars \cite{Cortes-2022, z7, ref10}. In this paper, we mostly consider the case $\beta=0$ (in (\ref{H1})), and the dynamic system becomes
\begin{equation*}
    \ddot{K}(t)+2d\dot{K}(t)+\nabla f(K(t))=0,
\end{equation*}
which is widely known as the Heavy-ball dynamic system \cite{Alvarez-SICON2000}. Recently, there has been a resurgence of scholarly interest in Heavy-ball dynamical system; see \cite{z1,z3,z4,z6}. For SLQR problem, the feasible set is nonconvex, and the $L$-smoothness and PL condition only hold on some compact sublevel set, making the methodology of convergence analysis differ from those of \cite{z1,z3,z4,z6}.  Hence, the hybrid dynamic system (\ref{H0}) with parameters (\ref{H1})-(\ref{H3}) does not exhibit parallel generalizations of abovementioned papers.
\end{remark}

\begin{lemma}
The hybrid dynamic system (\ref{H0}) with parameters (\ref{H1})-(\ref{H3}) is well-defined if $K(0)\in\mathcal{S}_0$, where $\mathcal{S}_0$ is defined in Figure \ref{fig}.
\end{lemma}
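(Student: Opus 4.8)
The plan is to verify the two requirements implicit in Definition~\ref{Defi-2}: that the flow map $F$ generates a unique feasible trajectory on $\mathcal{C}$ (so that $\nabla f$ is defined everywhere along it), and that the jump map $G$ genuinely sends $\partial\mathcal{C}$ into ${\rm int}(\mathcal{C})$. The cornerstone of both is an invariance claim, namely that $f(K(t))\le f(K_0)$ holds along the entire hybrid trajectory, which confines $K(t)$ to the sublevel set $\mathcal{S}_0$. Since $f$ is coercive on $\mathcal{S}$ (established earlier), $\mathcal{S}_0$ is compact, and on it $f$ is real-analytic, hence $\nabla f$ is smooth and locally Lipschitz. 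The main auxiliary object I would introduce is the energy $\mathcal{E}(K,\dot K)=f(K)+\tfrac{1}{2}\|\dot K\|_F^2$.

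For the flow part I would restrict to the case $\beta=0$ of Remark~\ref{history}, so that the dynamics read $\ddot K+2d\dot K+\nabla f(K)=0$ and a direct computation gives $\dot{\mathcal{E}}=\langle\nabla f(K),\dot K\rangle+\langle\dot K,-2d\dot K-\nabla f(K)\rangle=-2d\|\dot K\|_F^2\le 0$. The invariance $f(K(t))\le f(K_0)$ then follows from the flow-set constraint alone: along any flow arc one has $z(t)\in\mathcal{C}$, so at every instant either $f(K)<f(K_0)$ or $\dot f(K)<0$. If $f$ were to exceed $f(K_0)$, I would take the last time $t_0$ at which $f(K(t_0))=f(K_0)$; on the ensuing arc the first disjunct fails, forcing $\dot f<0$, so that $f$ is strictly decreasing there, contradicting $f>f(K_0)$. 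Hence $K(t)\in\mathcal{S}_0$, which is compact; $F$ is locally Lipschitz there and $\dot{\mathcal{E}}\le 0$ keeps $\|\dot K\|_F$ bounded, so by standard ODE theory the flow exists, is unique, cannot leave $\mathcal{S}$, and extends until the next boundary contact.

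For the jump part I would note that at any $z\in\partial\mathcal{C}$ one has $f(K)\ge f(K_0)>f(K^*)$, so the PL inequality (established earlier) yields $\|\nabla f(K)\|_F^2\ge 2\mu(f(K_0)-f(K^*))>0$, whence $\nabla f(K)\neq 0$. Since $G$ leaves $K$ fixed and sets $\dot K^+=-\eta\nabla f(K)$, the post-jump descent rate is $\tfrac{{\rm d}}{{\rm d}t}f(K)\big|^+=\langle\nabla f(K),-\eta\nabla f(K)\rangle=-\eta\|\nabla f(K)\|_F^2<0$, and by continuity of $\dot f$ an entire neighbourhood satisfies $\dot f<0$, so $G(z)\in{\rm int}(\mathcal{C})$, exactly as Definition~\ref{Defi-2} demands. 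Because $K^+=K^-$, the invariance $f\le f(K_0)$ survives the jump unchanged, and $\|\dot K^+\|_F=\eta\|\nabla f(K)\|_F$ is bounded on the compact $\mathcal{S}_0$. Finally, the strict post-jump inequality $\dot f^+<0$ together with the uniform lower bound $\|\nabla f\|_F^2\ge 2\mu(f(K_0)-f(K^*))$ on $\{f\ge f(K_0)\}\cap\mathcal{S}_0$ furnishes a positive dwell time between consecutive jumps, ruling out chattering, so a complete hybrid solution exists.

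The step I expect to be the main obstacle is establishing the invariance $f(K(t))\le f(K_0)$, since $\mathcal{C}$ is defined through the disjunction of a value condition and a descent condition, and one must argue carefully across the boundary crossings that the restart truly reflects the trajectory back into $\mathcal{S}_0$ rather than merely slowing its escape. This confinement to a sublevel set is precisely the ingredient absent from the unconstrained modified Heavy-ball analyses referenced in Remark~\ref{history}, and it is what makes both the well-definedness here and the later convergence analysis depart from those works.
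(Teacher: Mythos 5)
Your proposal is correct, and its decisive step coincides with what is essentially the paper's \emph{entire} proof: the paper reduces well-definedness to showing that the state re-enters the flow set after a jump, and verifies this by exactly your computation $\tfrac{\rm d}{{\rm d}t}f(K(t))=\langle\nabla f(K(t)),-\eta\nabla f(K(t))\rangle=-\eta\Vert\nabla f(K(t))\Vert_F^2<0$. What you add beyond the paper is worthwhile rigor rather than a different route. First, the strict inequality requires $\nabla f(K)\neq 0$ at the jump point; the paper asserts this silently, whereas you justify it via the PL inequality on the level set $\{f=f(K_0)\}$ (legitimate here since the lemma sits in the SLQR setting $C=I$, where PL holds on sublevel sets) --- this closes a small but genuine gap. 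Second, you prove the confinement $f(K(t))\le f(K_0)$ along flow arcs by the last-crossing argument, together with compactness of $\mathcal{S}_0$ and local Lipschitzness of $F$, hence existence and uniqueness of the flow; the paper takes all of this for granted, merely invoking ``continuity of the flow map'' to assert that jumps occur exactly on $\{f=f(K_0)\}\cap\{\dot f\ge 0\}$. Third, your dwell-time argument excluding chattering addresses a point the paper does not treat inside this lemma at all: the paper instead \emph{assumes} (Assumption 2) that the jump map fires at most $S$ times, so your uniform lower bound on inter-jump times actually substantiates part of what the paper postulates. In short: same key idea, but your write-up is more complete than the published proof.
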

\begin{proof}
  We only need to prove that the state must enter the flow set $\mathcal{C}$ after a jump. Based on the definition of $\mathcal{C}$ and the continuity of flow map $F(z)$, the jump map is executed when $f(K(t))=f(K(0))$ and $\frac{\rm d}{{\rm d}t}f(K(t))\geq 0$. Then,
  	$$\frac{{\rm d}}{{\rm d}t}f(K(t))=\langle\nabla f(K(t)),\dot{K}(t)\rangle=-\eta\Vert \nabla f(K(t))\Vert_F^2<0,$$
  i.e., the objective function $f(K(t))$ is locally strictly monotone decreasing and the flow map will be executed. Hence, the hybrid dynamic system is well-defined.
\end{proof}

\begin{assumption}
Assume that the jump map will be executed no more than $S$ times while the hybrid dynamic system (\ref{H0}) with  parameters (\ref{H1})-(\ref{H3}) is in operation.
\end{assumption}

Let us define the total energy function
$H:\mathbb{R}^{m\times n}\times \mathbb{R}^{m\times n}\to\mathbb{R}$:
\begin{equation*}
  H(q,p)=\frac{1}{2}\Vert p\Vert^2_F+f(q)-f(K^*).
\end{equation*}
\begin{definition}
Assuming that $F(z)$ is continuously differential, we call the hybrid dynamic system (\ref{H0}) a momentum-based optimization algorithm with restarting rule $G(z)$ for $f(\cdot)$, if the local minimizer denoted by $z^*$ is an asymptotically stable equilibrium in the sense of Lyapunov, i.e., $\varphi_t(z^*)=z^*$, for all $t\in\mathbb{R}_+$, $\varphi_t$ is continuous at $z^*$, uniformly in $t$, and $\lim_{t\to\infty}\varphi_t(z_0)=z^*$ for any $z_0$ in a neighborhood of $z^*$, where $\varphi_t$ denotes the solution trajectory of $\dot{z}=F(z)$. In addition, the jump map should be executed in finite times. Moreover, the region of attraction of the equilibrium $z^*$ (of the hybrid dynamic system (\ref{H0})) is defined as the set
\begin{equation*}
  \mathcal{R}=\{z_0\in\mathbb{R}^{2m\times n}\colon \lim_{t\to\infty}\varphi_t(z_0)=z^*\}.
\end{equation*}
\end{definition}
\begin{theorem}\label{theorem4}
  Provided that $d>0$ and $\beta=0$, the equilibrium $z^*$ is  asymptotically stable in the sense of Lyapunov with
  $$z^*=\begin{pmatrix}
  K^*\\
  0
  \end{pmatrix},$$
and its region of attraction contains the set $\mathcal{A}_f=\mathcal{S}_1\times\mathbb{R}^{m\times n}$.
\end{theorem}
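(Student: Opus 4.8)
The plan is to exhibit the total energy $H$ as a strict Lyapunov function and then invoke LaSalle's invariance principle, with coercivity supplying the invariance needed to make the argument global over $\mathcal{S}_1\times\mathbb{R}^{m\times n}$. Setting $\beta=0$ collapses $f_d(q,p)$ to $-2dp$, so the flow $\dot z=F(z)$ reduces to the Heavy-ball system $\ddot K+2d\dot K+\nabla f(K)=0$, i.e. $\dot q=p$, $\dot p=-\nabla f(q)-2dp$. Since $\nabla f(K^*)=0$, the point $z^*=(K^{*\mathrm{T}},0)^{\mathrm{T}}$ is an equilibrium. Writing $H(q,p)=\frac12\|p\|_F^2+f(q)-f(K^*)$ and differentiating along trajectories, I would compute
\[
\dot H=\langle p,\dot p\rangle+\langle\nabla f(q),\dot q\rangle=\langle p,-\nabla f(q)-2dp\rangle+\langle\nabla f(q),p\rangle=-2d\|p\|_F^2\le 0,
\]
so $H$ is nonincreasing and strictly decreasing whenever $p\ne 0$.

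Next I would establish that $H$ is positive definite about $z^*$. Clearly $H\ge 0$ with $H(z^*)=0$, and $H(z)=0$ forces $p=0$ together with $f(q)=f(K^*)$. Invoking the quadratic-growth consequence of the PL condition recorded in Section \ref{section4.1}, namely $f(q)-f(K^*)\ge\frac{\mu}{2}\|q-K^*\|_F^2$, the equality $f(q)=f(K^*)$ yields $q=K^*$; thus $H(z)=0$ if and only if $z=z^*$. Combined with $\dot H\le 0$, this already delivers Lyapunov stability of $z^*$.

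The heart of the argument is boundedness and completeness of the flow, for which coercivity is essential. For any $z_0\in\mathcal{S}_1\times\mathbb{R}^{m\times n}$, monotonicity of $H$ gives $f(K(t))-f(K^*)\le H(z_0)$ throughout the maximal interval of existence, so $K(t)$ stays in the sublevel set $\mathcal{S}_{\,f(K^*)+H(z_0)}$. Because $f$ is coercive on $\mathcal{S}$, this sublevel set is a compact subset of $\mathcal{S}$ bounded away from $\partial\mathcal{S}$; consequently $\nabla f$ remains bounded and Lipschitz along the orbit, the velocity $p(t)$ cannot blow up (its energy is bounded by $H(z_0)$), and the trajectory never reaches the boundary in finite time. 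This rules out finite-time escape, yields global existence, and makes the orbit precompact.

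Finally I would apply LaSalle's invariance principle on this precompact orbit: the $\omega$-limit set lies in the largest invariant subset of $\{z:\dot H=0\}=\{z:p=0\}$. On any invariant trajectory with $p\equiv 0$ one has $\dot p=-\nabla f(q)=0$, so $\nabla f(q)=0$; by the PL condition every stationary point is a global minimizer, and quadratic growth makes it unique, forcing $q=K^*$. Hence the largest invariant set is $\{z^*\}$, so $\varphi_t(z_0)\to z^*$ for every $z_0\in\mathcal{S}_1\times\mathbb{R}^{m\times n}$, establishing asymptotic stability and the claimed region of attraction. The main obstacle is this completeness/invariance step: one must confirm that an arbitrarily large initial velocity cannot drive $K(t)$ out of $\mathcal{S}$ or cause finite-time blow-up, and it is precisely the interplay between the energy dissipation $\dot H=-2d\|p\|_F^2$ and the coercivity of $f$ on $\mathcal{S}$ that closes this gap; restricting to $\mathcal{S}_1$ rather than all of $\mathcal{S}$ simply guarantees that the sublevel bound $f(K^*)+H(z_0)$ keeps the orbit within the regime where the smoothness and PL estimates of Section \ref{section4.1} remain valid.
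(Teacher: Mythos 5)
Your proposal is correct, and it shares the paper's backbone: the total energy $H(q,p)=\tfrac12\Vert p\Vert_F^2+f(q)-f(K^*)$, the dissipation identity $\dot H=-2d\Vert p\Vert_F^2\le 0$, and La~Salle's invariance principle to pin down the $\omega$-limit set. Where you genuinely diverge is in the two supporting steps. First, for uniqueness of the stationary point, the paper computes the Hessian at an arbitrary stationary point $K_*$ (where $M=RK_*-B^{\rm T}X=0$, so $\tfrac12\nabla^2f(K_*)[E,E]=\langle REY,E\rangle>0$), concluding local strong convexity at every critical point and then invoking PL; you instead get uniqueness in one line from PL (every critical point is a global minimizer) plus quadratic growth (the global minimizer is unique). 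Second, for Lyapunov stability, the paper argues via Morse indices and the mountain pass theorem that $H^{-1}([0,H_0])$ is path-connected and compact; you instead verify that $H$ is positive definite at $z^*$ (again via quadratic growth) and apply the classical Lyapunov stability theorem. Your route is more elementary and avoids the Morse-theoretic machinery entirely, at no loss of generality for this statement. You are also more careful than the paper on a point it glosses over: the global existence / no-finite-time-escape argument, which you close correctly by combining the energy bound $f(K(t))\le f(K^*)+H(z_0)$, $\tfrac12\Vert p(t)\Vert_F^2\le H(z_0)$ with coercivity of $f$ on $\mathcal{S}$ to keep the orbit in a compact set bounded away from $\partial\mathcal{S}$. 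One small remark: since the PL constant degrades with the sublevel ($\mu(f(K_\chi))$ depends on $f(K_\chi)$), your appeals to PL and quadratic growth should be read as using the constant associated with the sublevel set $\{f\le f(K^*)+H(z_0)\}$ determined by the initial energy, not the fixed constant $\mu$ of Section \ref{section4.1}; this is harmless because positivity is all you need, and in fact your argument then shows the attraction property for any initial condition in $\mathcal{S}\times\mathbb{R}^{m\times n}$, which is slightly stronger than the claimed inclusion of $\mathcal{A}_f$.
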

\begin{proof}
First, we will show that the SLQR cost function $f(K)$ has a unique stationary point $K^*$ that is globally optimal. For any stationary point $K_*$, it is straightforward to show that
\begin{align*}
 \frac{1}{2}\nabla^2 f(K_*)[E,E]&=\langle REY,E\rangle-2\langle B^{\rm T}X'Y,E\rangle\\
 &=\langle REY,E\rangle\\
 &={\rm Tr}\left((R^{\frac{1}{2}}E)Y(R^{\frac{1}{2}}E)^{\rm T}\right)>0,
\end{align*}
where the last inequality is implied by $R,Y\succ 0$. Hence, $f(K)$ is strongly convex in the neighborhood of $K_*$. Combining the $\mathcal{C}^1$-continuity and PL condition of $f(K)$, we have the uniqueness and globally optimal guarantee of the stationary point.

Noticing that when the flow map is executed, we have
\begin{align*}
		&\quad\frac{\rm d}{{\rm d}t}H(K(t),\dot{K}(t))\\
		&=\langle \dot{K}(t),\ddot{K}(t)\rangle+\langle \nabla f(K(t)),\dot{K}(t)\rangle\\
		&=\langle \dot{K}(t),-2d\dot{K}(t)-\nabla f(K(t))\rangle+\langle \nabla f(K(t)),\dot{K}(t)\rangle\\
		&=-2d\Vert\dot{K}(t)\Vert_F^2\leq0.
	\end{align*}
 Hence, for any initial point $z(0)\in\mathcal{A}_f$, the trajectory $z(t)$ is confined to a connected component of $H^ {-1}([0,H_0])$ with $H_0=H(z(0))$. By the definition of $H$ we infer that the stationary points of $H$ are all of the form $(K_*,0)$, where $K_*$ corresponds to a stationary point of $f(K)$. Obviously, there is unique stationary point $(K^*,0)$ and $H$ is strongly convex in its neighborhood. The abovementioned fact implies that $H$ has no such stationary point which has a strong Morse index $m_*$ and a weak Morse index $m^*$ related by $m_*\leq 1\leq m^*$. Combing the coercivity and smoothness of $H$, we can conclude that for any  $z(0)\in\mathcal{A}_f$, $H^{-1}([0,H_0])$ is path-connected by mountain pass theorem. Also, we can observe that $H^{-1}([0,H_0])$ is compact. This implies that $z^*$ is stable.

 We then show the region of attraction of $z^*$ contains the set $\mathcal{A}_f$. $H$ is a strictly decreasing function except the case when $\dot{K}(t)=0$. Hence according to La Salle's theorem, $K(t)$ necessarily converges to a stationary  point of $f$ and $\dot{K}(t)$ converges to 0. $K^*$ is the only stationary point in ${\rm Proj}_1\mathcal{A}_f$, and this implies the asymptotically stability of $z^*$.
\end{proof}

\begin{remark}\label{remark3}
By taking $z(0)=(K(0),0)$, the solution trajectory of $\dot{x}=F(x)$ with (\ref{H1}) is confined to
$$H^{-1}([0,H_0])=\{z:\frac{1}{2}\Vert \dot{K}(t)\Vert^2_F+f(K(t))\leq f(K(0))\},$$
and the jump map will never be executed, i.e., the hybrid dynamic system with parameters (\ref{H1})-(\ref{H3}) degenerates into a  continuous-time system.
However, we aim to broaden the range of options for the initial point $z(0)=(K(0),p(0))$, with the condition that $p(0)$ may be not $0$. In this case, $H^{-1}([0,H_0])$ becomes
$$H^{-1}([0,H_0])=\{z:\frac{1}{2}\Vert \dot{K}(t)\Vert^2_F+f(K(t))\leq f(K(0))+\frac{1}{2}\Vert p(0)\Vert_F^2\}.$$
Then, it is possible that $f(K(t))<f(K(0))$ for some $t\in\mathbb{R}_+$, i.e., the jump map may be executed. Therefore, we make the assumption that the jump map will be executed no more than $S$ times.

In addition, we want to provide an elucidation from the perspective of stability theory. By Theorem \ref{theorem4}, $z^*=(K^*,0)$ is asymptotically stable in the sense of Lyapunov: for every $\epsilon>0$, there exists a $\delta(\epsilon)>0$ such that, if $\Vert z(0)-z^*\Vert\leq \delta(\epsilon)$, then for every $t\in\mathbb{R}_+$ we have $\Vert z(t)-z^*\Vert\leq \epsilon$ and $z(t)\to z^*$. Hence, if we choose $\epsilon$ small enough such that $B_{\delta(\epsilon)}(\epsilon)\subseteq \mathcal{A}_f$, then the jump map will never be executed. The sole remaining task is to select a ``good'' initial point $\tilde{z}_0$ such that $\Vert \tilde{z}_0-z^*\Vert\leq\delta(\epsilon)$. Moreover, gradient descent (introduced in \cite{ref2}) can be used to select this ``good'' initial point $\tilde{z}_0$ (see Theorem 4.2 of \cite{ref2} for details). Yet again, we hope to expand the selection range of initial point, i.e., $\epsilon$ does not need to be so small (equivalently, initial point $\tilde{z}_0$ does not need to be so ``good'') such that $B_{\delta}(\epsilon)\subseteq \mathcal{A}_f$. When $K(t)$ is about to escaping from $\mathcal{S}_1$, the jump map will be executed, which can confine $K(t)$ for all $t\in\mathbb{R}_+$ to $\mathcal{S}_1$. Hence, we make the assumption that the jump map will be executed no more than $S$ times.

\end{remark}

\begin{theorem}
  Provided that $d\sim 1/\sqrt{\kappa},0<d$ and $\beta=0$, there exists a sufficiently small $\eta>0$, such that the hybrid dynamic system with parameters (\ref{H1})-(\ref{H3}) is accelerated; that is, there exists constants $C_a,c_a>0$, such that $\forall t\in\mathbb{R}_+$ and $\forall z_0\in\mathcal{A}_f$ the following bound holds:
  \begin{equation}\label{th1}
    \Vert\varphi_t(z_0)-z^* \Vert_F\leq C_a\Vert z_0-z^*\Vert_F\exp(-c_at/\sqrt{\kappa});
  \end{equation}
  here, $\varphi_t(z_0)$ denotes the solution trajectory of the hybrid dynamic system with initial point $z_0$.
\end{theorem}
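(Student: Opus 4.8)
The plan is to strip the hybrid trajectory down to its underlying continuous Heavy-ball flow (recall that $\beta=0$ turns (\ref{H1}) into $\ddot K + 2d\dot K + \nabla f(K)=0$), run a Lyapunov argument on each flow segment, and then patch the segments together across the at-most-$S$ jumps. First I would record that on $\mathcal S_1$ the trajectory effectively sees a strongly convex objective: since $\nabla^2 f(K^*)\succeq\mu I$ and, by Theorem \ref{LipschizHessian}, $\Vert\nabla^2 f(K)-\nabla^2 f(K^*)\Vert\le M\Vert K-K^*\Vert$, the Hessian stays uniformly positive definite on a sublevel set taken small enough that $\mathcal S_1\subseteq{\rm Cv}(K^*,\mathcal S_{\rm init})$. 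The quadratic-growth consequence of the PL condition recorded before this theorem, $f(K)-f(K^*)\ge\frac{\mu}{2}\Vert K-K^*\Vert_F^2$, supplies the matching lower bound, and the $C(\alpha_0)$-smoothness gives the upper bound $f(K)-f(K^*)\le\frac{L}{2}\Vert K-K^*\Vert_F^2$.

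On a single flow segment I would use the Lyapunov function
$$\mathcal E(t)=f(K(t))-f(K^*)+\tfrac12\big\Vert\dot K(t)+d\,(K(t)-K^*)\big\Vert_F^2 .$$
Differentiating along $\ddot K=-2d\dot K-\nabla f$ and writing $v=\dot K+d(K-K^*)$, the terms involving $\langle\nabla f,\dot K\rangle$ combine into $-d\langle\nabla f,K-K^*\rangle$, leaving $\dot{\mathcal E}=-d\langle\nabla f,K-K^*\rangle-d\Vert v\Vert_F^2+d^2\langle v,K-K^*\rangle$. Substituting the strong-convexity inequality $\langle\nabla f(K),K-K^*\rangle\ge (f-f^*)+\tfrac{\mu}{2}\Vert K-K^*\Vert_F^2$ and absorbing the indefinite last term by Young's inequality, the coefficient of $\Vert K-K^*\Vert_F^2$ becomes nonpositive exactly when $d\lesssim\sqrt{\mu}$, and one is left with $\dot{\mathcal E}\le -d\,\mathcal E$. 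Since the prescription $d\sim1/\sqrt\kappa$ lies in this admissible range, the decay rate is of order $1/\sqrt\kappa$. Combined with the two-sided equivalence $a\Vert z-z^*\Vert_F^2\le\mathcal E\le A\Vert z-z^*\Vert_F^2$ (from the quadratic lower/upper bounds on $f-f^*$ together with completing the square in the velocity block), integrating via Gr\"onwall and taking square roots yields (\ref{th1}) on each segment with $c_a$ a fixed multiple of the $\mathcal E$-decay rate.

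It then remains to cross the jumps. Here I would exploit that $\mathcal S_1$ is compact and that the energy $H$ is non-increasing along flows (the computation preceding Theorem \ref{theorem4}), so along the whole trajectory $\Vert K-K^*\Vert_F$, $\Vert\nabla f(K)\Vert_F$ and $\Vert\dot K\Vert_F$ remain bounded by constants determined by $z_0$ and $\mathcal S_1$. Consequently the jump map $G(z)=(K,-\eta\nabla f(K))$ alters $\mathcal E$ only through the velocity block, and for $\eta$ small enough it raises $\mathcal E$ by at most a bounded factor; ideally one shows $\Vert\dot K^{+}+d(K-K^*)\Vert_F\le\Vert\dot K^{-}+d(K-K^*)\Vert_F$, so that $\mathcal E$ does not increase at a jump at all. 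Since by assumption at most $S$ jumps occur and $\mathcal E$ decays exponentially on every intervening segment, concatenating the segment estimates produces a single exponential bound valid for all $t\in\mathbb{R}_+$, with the finitely many bounded jumps absorbed into the prefactor $C_a$.

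The main obstacle I anticipate is precisely this interaction between the restart and the Lyapunov decay: the jump resets the momentum and may instantaneously raise $\mathcal E$, so one must either certify a genuine non-increase of $\mathcal E$ at each jump — which requires controlling the sign of $\langle\nabla f,\dot K^{-}\rangle$ at the jump time, where only $\frac{\mathrm d}{\mathrm dt}f\ge0$ is available — or quantify the bounded increase and argue that the geometric decay between jumps still dominates after $S$ restarts. A secondary difficulty is making rigorous the passage from the pointwise positivity $\nabla^2 f(K^*)\succeq\mu I$ to uniform strong convexity on all of $\mathcal S_1$ through the Lipschitz-Hessian radius $\mu/M$, i.e. verifying that the construction of ${\rm Cv}(K^*,\mathcal S_{\rm init})$ and the choice of $K_0$ indeed place $\mathcal S_1$ inside the strong-convexity region so that the segmentwise Lyapunov inequality is legitimate.
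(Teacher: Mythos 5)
Your overall architecture --- per-segment exponential decay, control of the jump map for small $\eta$, and concatenation across the at most $S$ jumps with the expansion factors absorbed into the prefactor $C_a$ --- is exactly the paper's (the paper sets $C_a=\tilde C_a^{S+1}$ in the same way), and your treatment of the jumps is sound in spirit. The genuine gap is in how you establish the per-segment decay. Your Lyapunov computation hinges on substituting $\langle\nabla f(K),K-K^*\rangle\ge f(K)-f(K^*)+\tfrac{\mu}{2}\Vert K-K^*\Vert_F^2$ into $\dot{\mathcal E}$, and this inequality is strong convexity of $f$ along the segment joining $K(t)$ to $K^*$; it must hold at every point the trajectory visits. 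For the SLQR cost this is unavailable and in general false: $f$ is nonconvex, and what the paper actually has on sublevel sets is only $L$-smoothness and the PL inequality, which (even combined with quadratic growth) do not imply that cross inequality. Strong convexity holds only on a ball around $K^*$ of radius roughly $\mu/M$, where $\nabla^2 f(K)\succeq(\mu-M\Vert K-K^*\Vert)I$ by Theorem \ref{LipschizHessian}, and there is no way to ``verify'' that $\mathcal{S}_1$ sits inside that ball --- this is not a secondary checking step, as you frame it, but the central obstruction. Worse, since $\mathcal{A}_f=\mathcal{S}_1\times\mathbb{R}^{m\times n}$ admits arbitrary initial momentum, between jumps the $K$-component is only confined to the energy sublevel set $\{f(K)\le f(K(0))+\tfrac12\Vert p(0)\Vert_F^2\}$, which can be far larger than $\mathcal{S}_1$; so no choice of $K_0$ places the whole trajectory in the strong-convexity region.

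The paper sidesteps this by not using a global Lyapunov function at all: each flow segment is handled by citing Proposition 8 of \cite{ref10}, a local-to-global exponential stability result that needs only (i) positive definiteness of $\nabla^2 f$ at the minimizer $K^*$ (established in the proof of Theorem \ref{theorem4}), (ii) the Lipschitz-Hessian property, and (iii) asymptotic convergence from the region of attraction (the La Salle/energy argument of Theorem \ref{theorem4}); the accelerated rate $c_a/\sqrt{\kappa}$ comes from the analysis near $z^*$, and the finite transient from $z_0$ into that neighborhood is absorbed into $\tilde C_a$ uniformly over compact sets of initial conditions. If you want a self-contained proof, that is the route to repair yours: your $\mathcal{E}$-based computation becomes legitimate once the trajectory has entered the ball where $\nabla^2 f\succeq(\mu-Mr)I\succ0$, and it must be spliced with a separate, rate-free argument (Theorem \ref{theorem4}) showing that every trajectory from $\mathcal{A}_f$ enters that ball in a time bounded uniformly on compact sets, rather than asserted to hold on all of $\mathcal{S}_1$ from the start.
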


\begin{proof}
Let $t_k$ be the time instant of the $k$-th execution of the jump map, and $z_0^k$ be the initial point of the $k$-th flow map. Moreover, $\varphi_t^k(z_0^k)$ is the position  at time instant $t$ of solution trajectory of the $k$-th flow map with initial point $z_0^k$. We have
\begin{align*}
&\quad\Vert\varphi_t^k(z_0^k)-z^*\Vert_F\\
&\leq\tilde{C}_a\Vert z_0^k-z^*\Vert_F\exp\left(-\frac{c_a}{\sqrt{\kappa}}(t-t_k)\right)\\
&\leq\tilde{C}_a\Vert\varphi_t^{k-1}(z_0^{k-1})-z^*\Vert_F\exp\left(-\frac{c_a}{\sqrt{\kappa}}(t-t_k)\right)\\
&\leq\tilde{C}_a^2\Vert z_0^{k-1}-z^*\Vert_F\exp\left(-\frac{c_a}{\sqrt{\kappa}}(t-t_{k-1})\right)\\
&~~\vdots\\
&\leq\tilde{C}_a^{k+1}\Vert z_0-z^*\Vert_F\exp\left(-\frac{c_a}{\sqrt{\kappa}}t\right),
\end{align*}
where the first inequality follows from Proposition 8 of \cite{ref10}, and the second inequality holds as $\eta$ is small enough. According to the assumption that the jump map will be executed no more that $S$ times, we can demonstrate (\ref{th1}) by denoting $C_a=\tilde{C}_a^{S+1}$.
\end{proof}

\subsection{Discrete-time Analysis}\label{section4.3}

To obtain the discrete-time algorithm, we discretize the flow map (\ref{H1}) by semi-implicit Euler method
\begin{equation}\label{d1}
\begin{aligned}
	 K_{k+1}&=K_k+Tp_{k+1},\\
   p_{k+1}&=p_k-T\nabla f(K_k)+Tf_d(K_k,p_k)\\
	 &=(1-2dT)p_k-T\nabla f(K_k+\beta p_k),
\end{aligned}
\end{equation}
which can be divided into two parts: an energy dissipation step and a symplectic Euler step \cite{ref11}. Due to the special strutrue of SLQR problem mentioned in Section \ref{section4.1}, we hope that the discrete-time trajectory of $K_k$ will be confined to the sublevel set $\mathcal{S}_0$. We consider the special case  $\beta=0$ and propose the following restarting rule
\begin{equation}\label{restart1}
\begin{aligned}
\text{if } &f(K_{k+1})>\alpha_1,\text{ then restart with }K_0=K_k,p_0=-\eta\nabla f(K_k).
\end{aligned}
\end{equation}

\begin{lemma}\label{lemma4}
Provided that $\beta=0$ and $\eta$ is small enough, the discrete-time algorithm (\ref{d1}) with restarting rule (\ref{restart1}) is well defined when the step size $T$ satisfies
$$0<T\leq\frac{-\eta+\sqrt{\eta^2+\frac{8(1-2d\eta)}{C(\alpha_0)}}}{2
	(1-2d\eta)}.$$
\end{lemma}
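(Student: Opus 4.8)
The plan is to read ``well-defined'' as the statement that the restarting rule (\ref{restart1}) can never trap the iteration in an infinite restart loop: whenever a restart is triggered at a point $K_k\in\mathcal{S}_0$, the iterate produced in the very next step from the reset state $(K_0,p_0)=(K_k,-\eta\nabla f(K_k))$ must again satisfy $f\le\alpha_1$, i.e. land back in $\mathcal{S}_0$. Once this single-step property is established, every retained iterate remains in $\mathcal{S}_0\subset\mathcal{S}_{\rm init}\subset\mathcal{S}$, so $f$, $\nabla f$ and the constant $C(\alpha_0)$ stay valid throughout the run, which is precisely what ``well-defined'' should mean here. First I would make the restart step fully explicit. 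Setting $\beta=0$ and inserting $p_0=-\eta\nabla f(K_k)$ into (\ref{d1}) gives
\begin{align*}
p_1&=(1-2dT)p_0-T\nabla f(K_k)=-\bigl[(1-2dT)\eta+T\bigr]\nabla f(K_k),\\
K_1&=K_k+Tp_1=K_k-s\,\nabla f(K_k),\qquad s=\eta T+(1-2d\eta)T^{2}.
\end{align*}
Since $\eta$ is taken small enough that $1-2d\eta>0$, the effective step length factorizes as $s=T\bigl[\eta+(1-2d\eta)T\bigr]>0$, so $K_1$ is a genuine gradient-descent step from $K_k$ with step size $s$, irrespective of the sign of $1-2dT$; this is where the hypothesis ``$\eta$ small enough'' is consumed.

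Next I would invoke the descent lemma along the segment $[K_k,K_1]$ using the $C(\alpha_0)$-smoothness recorded in Section \ref{section4.1}. Provided the segment lies in the region where that constant is valid, one obtains
$$f(K_1)\le f(K_k)-s\Bigl(1-\tfrac{C(\alpha_0)}{2}\,s\Bigr)\Vert\nabla f(K_k)\Vert_F^{2}.$$
Imposing $1-\tfrac{C(\alpha_0)}{2}s\ge 0$, that is $s\le 2/C(\alpha_0)$, forces $f(K_1)\le f(K_k)\le\alpha_1$, hence $K_1\in\mathcal{S}_0$ and no new restart is triggered. Substituting $s=(1-2d\eta)T^{2}+\eta T$ converts the condition $s\le 2/C(\alpha_0)$ into the quadratic inequality $(1-2d\eta)T^{2}+\eta T-2/C(\alpha_0)\le 0$; because $1-2d\eta>0$ this parabola opens upward, and solving for its positive root gives exactly the stated threshold
$$T\le\frac{-\eta+\sqrt{\eta^{2}+8(1-2d\eta)/C(\alpha_0)}}{2(1-2d\eta)}.$$

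The delicate point, and what I expect to be the main obstacle, is justifying that the descent inequality may be applied along the \emph{entire} segment $[K_k,K_1]$: a priori the gradient step could leave the sublevel set on which $C(\alpha_0)$ is certified. I would resolve this by a continuity/bootstrap argument that exploits the buffer $\alpha_1\le\alpha_0$ separating $\mathcal{S}_0$ from $\mathcal{S}_{\rm init}=\{K:f(K)\le\alpha_0\}$, the set on which the smoothness constant $C(\alpha_0)$ actually holds. Let $\tau^{*}\in(0,1]$ be the largest value for which $K_k-\tau s\nabla f(K_k)\in\mathcal{S}_{\rm init}$ for all $\tau\in[0,\tau^{*}]$. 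On this sub-segment the $C(\alpha_0)$-smoothness applies, so the descent estimate above with $s$ replaced by $\tau s\le s\le 2/C(\alpha_0)$ shows $f$ is nonincreasing along it and therefore stays $\le f(K_k)\le\alpha_1\le\alpha_0$. Consequently the trajectory never attains the boundary level $f=\alpha_0$, which forces $\tau^{*}=1$; the full segment thus lies in $\mathcal{S}_{\rm init}$ and the estimate is legitimate for the whole step. The same monotonicity simultaneously keeps the iterate inside $\mathcal{S}_{\rm init}\subset\mathcal{S}$, so the closed-loop matrix stays stabilizing, completing the well-definedness claim.
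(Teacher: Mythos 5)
Your proposal is correct and follows essentially the same route as the paper's proof: both rewrite the post-restart step as a single gradient step with effective stepsize $s=T\bigl(T+(1-2dT)\eta\bigr)=\eta T+(1-2d\eta)T^{2}$, use the $C(\alpha_0)$-smoothness together with a continuity/first-exit argument to justify the descent lemma and keep the iterate in the sublevel set, and then solve the quadratic $s\le 2/C(\alpha_0)$ to obtain the stated bound on $T$. The only (cosmetic) difference is that the paper lower-bounds the first exit time from $\mathcal{S}_0$ along the gradient ray, whereas you bootstrap within the larger set $\mathcal{S}_{\rm init}$; your version is in fact slightly more careful, since the paper's assertion $\phi(T)=\phi(0)$ at the exit time should really be the inequality $\phi(T)\ge\phi(0)$, which your argument avoids altogether.
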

\begin{proof}
After restarting, the first step of iteration becomes
$$K_{k+1}=K_k-T(T+(1-2dT)\eta)\nabla f(K_k),$$
which is equivalent to vanilla gradient descent. Let $K^{t}=K_{k}-t\nabla f(K_k),\phi(t)=f(K^{t})$, then
$$\phi'(0)=-\Vert \nabla f(K_k)\Vert_F^2<0.$$
Thus $\phi(t)<\phi(0)=f(K_k)$ and $K^t\in\mathcal{S}_0$ for sufficiently small $t>0$. From the compactness of $\mathcal{S}_0$, denote
$$T=\max\{t\colon K^\tau\in\mathcal{S}_0,0\leq\tau\leq t\}.$$
Then, for $0\leq t\leq T$, $f(K^t)$ is $C(\alpha_0)$-smooth. Hence, $\phi(t)$ is $L'$-smooth with $L'=L\Vert\nabla f(K_k)\Vert_F^2,$ and we have
$$\vert\phi(T)-\phi(0)-\phi'(0)T\vert\leq\frac{L'T^2}{2}.$$
Since $\phi(T)=\phi(0)=f(K_k)$, we conclude that $T\geq\frac{2\vert\phi'(0)\vert}{L'}=\frac{2}{C(\alpha_0)}.$ This means that $K^t\in\mathcal{S}_0$ for all $t\in\left[0,\frac{2}{C(\alpha_0)}\right] $. Hence, we can ensure $f(K_{k+1})\in\mathcal{S}_0$ when
$$T(T+ (1-2dT)\eta)\leq\frac{2}{C(\alpha_0)}.$$
By solving the inequality above, we have
$$0<T\leq\frac{-\eta+\sqrt{\eta^2+\frac{8(1-2d\eta)}{C(\alpha_0)}}}{2
	(1-2d\eta)}.$$
This completes the proof.
\end{proof}

Similar to the continuous-time case (Theorem \ref{theorem4}), the following lemma provides the domain of attraction for the discrete-time system (\ref{d1}).

\begin{lemma}[\cite{ref10}]\label{lemma5}
Assume that the trajectory of discrete-time system (\ref{d1})  satisfies that $\{K_k\}\subseteq {\rm Cv}(K^*,\mathcal{S}_{\rm init})$, and let $f_d(K_k,p_k)$ be such that $-d_2\Vert p_k\Vert^2_F\leq \langle p_k,f_d(K_k,p_k)\rangle\leq -d_1\Vert p_k\Vert^2_F$ with $0<d_1\leq d_2$. Then, there exists a maximum time step $T_{\rm max}>0$, such that $z^*$ is an asymptotically stable equilibrium of dynamics (\ref{d1})  for all $T\leq T_{\rm max}$, with the domain of attraction at least containing $\mathcal{A}_f\cap A$, where $A$ is any compact set. Moreover, the maximum time step $T_{\rm max}$ depends on an upper bound on $d_2,d_1/d_2,L_{\rm Ham}$, where $L_{\rm Ham}$ represents the Lipschitz constant of $\nabla H$.
\end{lemma}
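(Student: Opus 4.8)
The plan is to use the total energy $H(q,p)=\frac{1}{2}\Vert p\Vert_F^2+f(q)-f(K^*)$ as a discrete Lyapunov function and to show that it strictly decreases along nonequilibrium trajectories of (\ref{d1}) once $T$ is small. The first step is to compute the one-step change $H(K_{k+1},p_{k+1})-H(K_k,p_k)$. Using the update $K_{k+1}-K_k=Tp_{k+1}$ together with the $L_{\rm Ham}$-Lipschitz continuity of $\nabla H$ (whose $q$-component bounds the smoothness constant of $f$ on ${\rm Cv}(K^*,\mathcal{S}_{\rm init})$, exactly where the trajectory is assumed to live), the descent lemma gives $f(K_{k+1})\leq f(K_k)+T\langle\nabla f(K_k),p_{k+1}\rangle+\frac{L_{\rm Ham}T^2}{2}\Vert p_{k+1}\Vert_F^2$. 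Expanding $\frac{1}{2}\Vert p_{k+1}\Vert_F^2-\frac{1}{2}\Vert p_k\Vert_F^2=\langle p_{k+1},p_{k+1}-p_k\rangle-\frac{1}{2}\Vert p_{k+1}-p_k\Vert_F^2$ and substituting $p_{k+1}-p_k=-T\nabla f(K_k)+Tf_d(K_k,p_k)$, I expect the two contributions $\pm T\langle p_{k+1},\nabla f(K_k)\rangle$ to cancel exactly; this is the structural advantage of the semi-implicit (symplectic) scheme and leaves $H(z_{k+1})-H(z_k)\leq T\langle p_{k+1},f_d(K_k,p_k)\rangle-\frac{1}{2}\Vert p_{k+1}-p_k\Vert_F^2+\frac{L_{\rm Ham}T^2}{2}\Vert p_{k+1}\Vert_F^2$.

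The second step converts this into a clean decrease estimate. Writing $p_{k+1}=p_k+(p_{k+1}-p_k)$ and using $\langle p_k,f_d(K_k,p_k)\rangle\leq-d_1\Vert p_k\Vert_F^2$ together with the induced bound $\Vert f_d(K_k,p_k)\Vert_F\leq d_2\Vert p_k\Vert_F$ (and $\Vert p_{k+1}-p_k\Vert_F\leq T(\Vert\nabla f(K_k)\Vert_F+d_2\Vert p_k\Vert_F)$), the leading dissipation contributes $-d_1T\Vert p_k\Vert_F^2$, while the remaining cross terms and the $\frac{L_{\rm Ham}T^2}{2}\Vert p_{k+1}\Vert_F^2$ remainder are $O(T^2)$ multiples of $\Vert p_k\Vert_F^2$ and $\Vert\nabla f(K_k)\Vert_F^2$. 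The subtle point — and what I expect to be the main obstacle — is that a decrease that is merely negative in $\Vert p_k\Vert_F$ is not enough for LaSalle, since at $p_k=0$ the positive $O(T^2)\Vert\nabla f(K_k)\Vert_F^2$ term must also be dominated. Here the two-sided bound on $f_d$ forces $f_d(K_k,0)=0$, so at $p_k=0$ one has $p_{k+1}=-T\nabla f(K_k)$ and a direct computation again yields decrease; in general one must (possibly after augmenting $H$ with a small cross term $\langle\nabla f(q),p\rangle$, in the spirit of Nesterov-type Lyapunov functions) obtain a bound of the form $H(z_{k+1})-H(z_k)\leq-c_1T\Vert p_k\Vert_F^2-c_2T^2\Vert\nabla f(K_k)\Vert_F^2$ valid for all $T\leq T_{\rm max}$, where the tightness of the $O(T^2)$ bookkeeping is precisely what lets $T_{\rm max}$ be expressed through $d_2$, the ratio $d_1/d_2$, and $L_{\rm Ham}$ alone.

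With a decrease that is negative-definite jointly in $(p_k,\nabla f(K_k))$, the third step invokes a discrete LaSalle invariance principle. Intersecting the sublevel set $\{H\leq H(z_0)\}$ with the compact set $A$ (and using that the trajectory stays in ${\rm Cv}(K^*,\mathcal{S}_{\rm init})$, where all constants are valid) produces a compact forward-invariant region; on it the only set where $H$ stops decreasing is $\{p=0,\nabla f(K)=0\}$, which by the uniqueness of the stationary point established in the proof of Theorem \ref{theorem4} is the single point $z^*=(K^*,0)$. Hence every trajectory starting in $\mathcal{A}_f\cap A$ converges to $z^*$, giving both asymptotic stability and the stated domain of attraction, and the dependence of $T_{\rm max}$ on $d_2,d_1/d_2,L_{\rm Ham}$ is read off from the second step. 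Since the lemma is adopted from \cite{ref10}, an equivalent and shorter route is to verify that the hypotheses of the corresponding general discretization theorem there hold in the present setting — namely that $\nabla H$ is $L_{\rm Ham}$-Lipschitz on ${\rm Cv}(K^*,\mathcal{S}_{\rm init})$ and that $f_d$ satisfies the two-sided dissipation bound — and then apply that theorem directly.
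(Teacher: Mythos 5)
The paper never proves this lemma: the bracketed citation in its header means it is imported wholesale from \cite{ref10} (it is the asymptotic-stability proposition for the discretized momentum dynamics there, Proposition 10, which the paper invokes again in a later remark), and the paper's "proof" consists of nothing beyond applying that result. So your closing sentence — check that $\nabla H$ is Lipschitz and that $f_d$ satisfies the dissipation bounds, then cite the theorem — is precisely what the paper does, while your Lyapunov reconstruction is a self-contained route that the paper never writes down. The reconstruction is essentially the right argument, and its central computation is correct: the semi-implicit update makes the terms $\pm T\langle p_{k+1},\nabla f(K_k)\rangle$ cancel exactly in the one-step change of $H$. One of your worries is also unnecessary: the positive $\mathcal{O}(T^2)\Vert\nabla f(K_k)\Vert_F^2$ remainder is absorbed by the term $-\frac{1}{2}\Vert p_{k+1}-p_k\Vert_F^2=-\frac{T^2}{2}\Vert\nabla f(K_k)-f_d(K_k,p_k)\Vert_F^2$ that you already retain, so no augmented Lyapunov function with cross terms is needed; moreover, for the discrete LaSalle step a decrease in $\Vert p_k\Vert_F^2$ alone suffices, since the largest invariant set inside $\{p=0\}$ forces $p_{k+1}=-T\nabla f(K_k)=0$ and hence consists only of stationary points, which is the single point $z^*$.

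There is, however, one genuine flaw if your argument is read as a proof of the lemma as stated: the "induced bound" $\Vert f_d(K_k,p_k)\Vert_F\le d_2\Vert p_k\Vert_F$ does \emph{not} follow from the two-sided hypothesis $-d_2\Vert p_k\Vert_F^2\le\langle p_k,f_d(K_k,p_k)\rangle\le -d_1\Vert p_k\Vert_F^2$, which constrains only the component of $f_d$ along $p_k$; $f_d$ could carry an arbitrarily large component orthogonal to $p_k$. This matters quantitatively: carrying out your expansion gives a positive remainder $+\frac{T^2}{2}\Vert f_d\Vert_F^2$ (plus the $\frac{L_{\rm Ham}T^2}{2}\Vert p_{k+1}\Vert_F^2$ term), and without a norm bound on $f_d$ this cannot be dominated by $-d_1T\Vert p_k\Vert_F^2$ for any fixed $T$, so no admissible $T_{\rm max}$ depending only on $d_2$, $d_1/d_2$ and $L_{\rm Ham}$ can be extracted. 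The gap is harmless where the paper actually uses the lemma — there $\beta=0$ and $f_d(K_k,p_k)=-2dp_k$, so $d_1=d_2=2d$ and the norm bound is an identity, and indeed dominating $\frac{d_2^2T^2}{2}\Vert p_k\Vert_F^2$ by $d_1T\Vert p_k\Vert_F^2$ is exactly what produces the stated dependence of $T_{\rm max}$ on $d_2$ and $d_1/d_2$ — but to prove the general statement you must either add the norm bound $\Vert f_d\Vert_F\le d_2\Vert p_k\Vert_F$ as a hypothesis (as is effectively done in \cite{ref10}) or restrict to this concrete $f_d$.
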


The following theorem provides the convergence analysis of discrete-time system (\ref{d1}), showing that iteration algorithm (\ref{d1}) exhibits Nesterov-optimal convergence rate.

\begin{theorem}\label{theorem6}
  Letting $d\sim 1/\sqrt{\kappa},d>0$ and $\beta=0$ and choosing $T$ sufficiently small such that
  $$0<T\leq\min\left\{T_{\rm max},\frac{-\eta+\sqrt{\eta^2+\frac{8(1-2d\eta)}{C(\alpha_0)}}}{2
(1-2d\eta)}\right\},$$
the discrete-time system (\ref{d1})  with the restarting rule (\ref{restart1}) is accelerated; that is,
\begin{equation*}
\Vert(K_k,p_k)-z^*\Vert_F=\mathcal{O} \left(\exp\left({-\frac{c}{\sqrt{\kappa}}}k\right)\right),
\end{equation*}
where $c>0$ is a constant.
\end{theorem}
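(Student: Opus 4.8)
The plan is to mirror the structure of the continuous-time accelerated convergence argument and transport it to the discrete iteration through the three ingredients already assembled: well-definedness (Lemma \ref{lemma4}), discrete asymptotic stability (Lemma \ref{lemma5}), and the Lipschitz Hessian property (Theorem \ref{LipschizHessian}). First I would fix the step size $T$ in the stated range and invoke Lemma \ref{lemma4} to guarantee that the restarting rule (\ref{restart1}) keeps the whole trajectory inside $\mathcal{S}_0$, hence inside the convex set ${\rm Cv}(K^*,\mathcal{S}_{\rm init})$. On this convex set $f$ is $L$-smooth and its Hessian is $M$-Lipschitz, so all the regularity needed downstream is available without ever leaving the region where the PL condition and the local strong convexity (established in the proof of Theorem \ref{theorem4}) hold.

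Second, I would verify the hypotheses of Lemma \ref{lemma5} for the present flow. With $\beta=0$ the dissipation term reduces to $f_d(K_k,p_k)=-2dp_k$, so $\langle p_k,f_d(K_k,p_k)\rangle=-2d\Vert p_k\Vert_F^2$, giving $d_1=d_2=2d$ and hence the required two-sided bound. Since $\{K_k\}\subseteq{\rm Cv}(K^*,\mathcal{S}_{\rm init})$ and $\nabla H$ is Lipschitz on this set (a consequence of $L$-smoothness together with Theorem \ref{LipschizHessian}), Lemma \ref{lemma5} supplies a maximal step size $T_{\rm max}$ and asymptotic stability of $z^*$ for every $T\leq T_{\rm max}$; this is exactly why the admissible range for $T$ in the statement is the intersection with $T_{\rm max}$.

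Third, and this is the heart of the argument, I would show that on each inter-restart segment the symplectic (semi-implicit) Euler scheme preserves the continuous-time rate. Appealing to the discrete convergence result of \cite{ref10}, whose applicability is precisely guaranteed by the $M$-Lipschitz Hessian, one obtains a discrete Lyapunov function that contracts by a factor $1-\Theta(T/\sqrt{\kappa})$ per step once $d\sim 1/\sqrt{\kappa}$; summing over the steps of a segment gives $\Vert(K_k,p_k)-z^*\Vert_F\leq\tilde{C}\Vert(K_{k_0},p_{k_0})-z^*\Vert_F\exp(-\tfrac{c}{\sqrt{\kappa}}(k-k_0))$ on that segment. Finally, exactly as in the continuous-time proof, I would chain these per-segment bounds across the at most $S$ executions of the restart: each restart resets $p$ to $-\eta\nabla f(K_k)$, but the displacement across the jump is controlled for $\eta$ small, so multiplying the $S+1$ segment estimates absorbs the prefactors into a single $C=\tilde{C}^{S+1}$ and leaves the exponent $-\tfrac{c}{\sqrt{\kappa}}k$ unchanged, which is the claimed rate.

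The hard part will be the third step: transferring the continuous-time exponential rate to the discrete scheme without degrading the $1/\sqrt{\kappa}$ exponent. The difficulty is twofold. One must construct the discrete Lyapunov function and control the $\mathcal{O}(T^2)$ discretization error using the $M$-Lipschitz Hessian so that the per-step contraction factor is genuinely $1-\Theta(T/\sqrt{\kappa})$ rather than the naive $1-\Theta(T/\kappa)$; this is where the Lipschitz Hessian property is indispensable and where the symplectic structure of the scheme must be exploited. In addition, the standard discrete Lyapunov analysis of \cite{ref10} presumes an uninterrupted flow, whereas our restarting rule periodically resets the momentum and thereby raises the Lyapunov value; handling this cleanly requires treating each segment separately and relying on the finiteness of $S$ to keep the accumulated constant bounded.
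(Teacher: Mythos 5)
Your proposal is correct and follows essentially the same route as the paper: confinement of $\{K_k\}$ in $\mathcal{S}_0\subseteq{\rm Cv}(K^*,\mathcal{S}_{\rm init})$ via the restarting rule and Lemma \ref{lemma4}, verification of the dissipation hypothesis $\langle p_k,f_d(K_k,p_k)\rangle=-2d\Vert p_k\Vert_F^2$ so that Lemma \ref{lemma5} applies, and then invocation of the discrete accelerated-convergence result of \cite{ref10} (Proposition 11) for the rate. Your explicit chaining of per-segment estimates across the at most $S$ restarts is a detail the paper's discrete-time proof leaves implicit (it appears only in the continuous-time argument), but it is the same idea rather than a different method.
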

\begin{proof}
  By the restarting rule introduced by (\ref{restart1}), we can ensure that the trajectory of $K_k$ is confined to the sublevel set $\mathcal{S}_0\subseteq{\rm Cv}(K^*,\mathcal{S}_{\rm init}),$ and have
  $$\langle p_k,f_d(K_k,p_k)\rangle=-2d\Vert p_k\Vert^2_F.$$
  Hence, the assumption of Lemma \ref{lemma5} naturally holds, and we have thus far determined $z^*$'s domain of attraction.
Then, based on Lemma \ref{lemma4}, this theorem can be deduced directly from the Proposition 11 of \cite{ref10}.
\end{proof}
\begin{remark}
To emphasize, \cite{ref2} suggests that the SLQR problem can be solved by vanilla gradient descent and draws the conclusion that the iteration sequence fulfills the following convergence rate
\begin{equation}\label{Polyak}
\Vert K_k-K^*\Vert\leq \mathcal{O}\left(\exp\left(\frac{\tilde{c}}{\tilde{\kappa}}k\right)\right),
\end{equation}
where $\tilde{c}$ is a constant and $\tilde{\kappa}$ is given by
$\tilde{\kappa}=\frac{C(\alpha_0)}{\mu(\alpha_0)}.$
We may consider $\kappa$ as a strictly increasing function of the initial value, i.e.,
$\kappa(\alpha)=\frac{C(\alpha)}{\mu(\alpha)}.$
Hence, (\ref{Polyak}) is equivalent to
$$\Vert K_k-K^*\Vert\leq \mathcal{O}\left(\exp\left(\frac{\tilde{c}}{\kappa(\alpha_0)}k\right)\right),$$
and the convergence rate given in Theorem \ref{theorem6} is equivalent to
$$\Vert K_k-K^*\Vert\leq \mathcal{O}\left(\exp\left(\frac{c}{\sqrt{\kappa(f(K^*))}}k\right)\right).$$
As a result, our method can not only achieve the Nesterov-optimal convergence rate, but also ensure that the choice of initial point makes no difference on the estimation of  convergence order.
\end{remark}

\section{Acceleration Optimization of OLQR Problem}

In this section, we will discuss the acceleration optimization of the OLQR problem, and due to problem's nonconvex structure, we only pursue the stationary point instead of the global minimizer of objective function. By using vanilla gradient descent, it has been demonstrated that under suitable conditions the following convergence result holds \cite{ref2}
\begin{equation}\label{sec5-1}
\min_{0\leq j\leq k}\Vert \nabla f(K_j)\Vert_F \leq \sqrt{\frac{f(K_0)}{c_1 k}},
\end{equation}
where $c_1$ is a constant. Moreover, \cite{ref12} examines the general nonconvex optimization problems from the perspective of Hamiltonian by generalized momentum-based methods, and shows the convergence result by some variants of NAG
\begin{equation}\label{sec5-2}
\min_{0\leq j\leq k}\Vert \nabla f(K_j)\Vert_F = \mathcal{O}\left(\sqrt{\frac{L(f(x_0)-f^*)}{k}}\right),
\end{equation}
where $L$ is a smoothness parameter.
Interestingly, the result of (\ref{sec5-2}) is not better than that of (\ref{sec5-1}),
and it is guessed that utilizing NAG directly cannot obtain an accelerated nonconvex optimization algorithm.
Therefore, the acceleration optimization of OLQR problem with a completely nonconvex structure is not a natural extension of that of SLQR problem.

In this section, motivated by \cite{ref13}, we introduce a new Hessian-free algorithm framework which is a two-procedure method  consisting of semiconvex function optimization and negative curvature exploitation. Different from \cite{ref13}, a new restarting rule is proposed based on the two-procedure method, requiring additional effort for convergence analysis. Specifically, Section \ref{section5.1} and Section \ref{section5.2} introduce the basic results on semiconvex function optimization and negative curvature exploitation with restarting rule, respectively. Further, in Section \ref{section5.3}, we review the structure of OLQR problem and propose a two-procedure algorithm framework with restarting rule for $\epsilon$-stationary point with accelerated convergence rate.

\subsection{Semiconvex Function Optimization}\label{section5.1}

\begin{definition}[generalized strong convexity and semi-convexity]
  A function $\phi\colon\mathbb{R}^d\longmapsto\mathbb{R}$ is $\sigma_1$-generalized-strongly convex if $\frac{\sigma_1}{2}\Vert y-x \Vert^2\leq \phi(y)-\phi(x)-\langle \nabla \phi(x),y-x\rangle$ for some $\sigma_1\in\mathbb{R}$; equivalently, $\nabla^2\phi(x)\succeq \sigma_1 I$ for all $x$. For $\gamma=\max\{-\sigma_1,0\}$, we call such function $\gamma$-semiconvex.
\end{definition}
\begin{definition}[optimality gap]
  A function $\phi\colon\mathbb{R}^d\longmapsto\mathbb{R}$ has optimality gap $\Delta_\phi$ at point $x$ if $\Delta_\phi\geq \phi(x)-\inf_{y\in\mathbb{R}^d}\phi(y)$.
\end{definition}

Before introducing Semiconvex-NAG, which is an accelerated stationary point-seeking method for semiconvex functions, we present NAG with a restarting rule for strongly convex functions.

\begin{breakablealgorithm}
    \caption{Nesterov-Accelerated-Gradient (NAG) }
    \begin{algorithmic}[1]
    \State \textbf{Given} : Object function $\phi$, initial point $y_1$, accuracy $\epsilon$, smoothness parameter $L_1$, convexity parameter $\sigma_1$
    \State \textbf{Result} : $K_j$
    \State Set $\kappa=\frac{L_1}{\sigma_1}$, and $K_1=y_1$
    \For{$j=1,2,...$}
        \If{$\Vert\nabla \phi(K_j)\Vert<\epsilon$}
           \Return {$K_j$}
        \EndIf
        \State Let
        \begin{align*}
	   y_{j+1}&=K_j-\frac{1}{L_1}\nabla \phi(K_j)\\ K_{j+1}&=\left(1+\frac{\sqrt{\kappa}-1}{\sqrt{\kappa}+1}\right)y_{j+1}-\frac{\sqrt{\kappa}-1}{\sqrt{\kappa}+1}y_j
        \end{align*}
    \EndFor
    \end{algorithmic}
    \label{alg1}
\end{breakablealgorithm}

In this section, the algorithm's output is denoted by the name of algorithm (or its abbreviation) along with its inputs. For instance, Nesterov-Accelerated-Gradient$(\phi,y_1,\epsilon,L_1,\sigma_1)$ (or abbreviation NAG$(\phi,y_1,\epsilon,L_1,\sigma_1)$) denotes the result of algorithm NAG with inputs $(\phi,y_1,\epsilon,L_1,\sigma_1)$.

We introduce the following restarting rule:
\begin{equation}\label{restart}
  \text{if}~\phi(K_{j+1})\geq \phi(K_1)=\eta,~\text{restart NAG}(\phi,K_j,\epsilon ,L_1,\sigma_1).
\end{equation}
\begin{lemma}\label{lemma6}
Assume that the restarting rule is executed no more that $S$ times and let $\phi\colon\mathbb{R}^{m\times n}\to \mathbb{R}$ be $(\sigma_1>0)$-strongly convex and $L_1$-smooth. Let $\epsilon>0$ and $K_N$ be the $N$-th iteration of NAG$(\phi,K_1,\epsilon,L_1,\sigma_1)$ with restarting rule (\ref{restart}). Then $\Vert \nabla \phi(K_N)\Vert_F\leq\epsilon$ for every
$$N\geq S+1+\sqrt{\kappa}\log\left(\frac{2^{S+2}\kappa^{S+1}L_1\Delta_\phi}{\epsilon^2}\right),$$
where $\kappa=\frac{L_1}{\sigma_1}$ is the condition number of $\phi$ and $\Delta_\phi$ is optimality gap of $\phi$ at $K_1$.
\end{lemma}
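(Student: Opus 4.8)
The plan is to split the whole iteration history into at most $S+1$ consecutive NAG segments separated by the restarts, to bound the number of iterations spent in each segment, and then to add these counts. Two standard ingredients drive the argument. The first is the accelerated linear rate of NAG on a $\sigma_1$-strongly convex, $L_1$-smooth function: within one segment started at $K_1$,
\[
\phi(K_t)-\phi^*\leq 2\kappa\,\rho^{t-1}\big(\phi(K_1)-\phi^*\big),\qquad \rho:=1-\tfrac{1}{\sqrt{\kappa}},
\]
where $\phi^*=\inf_y\phi(y)$; the $\mathcal{O}(\kappa)$ prefactor comes from converting the initial distance term $\tfrac{L_1}{2}\|K_1-K^*\|_F^2$ (with $K^*$ the minimizer of $\phi$) into $\kappa\big(\phi(K_1)-\phi^*\big)$ via strong convexity. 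The second is the smoothness inequality $\|\nabla\phi(K)\|_F^2\leq 2L_1(\phi(K)-\phi^*)$, which turns a value gap into a gradient bound; hence it suffices to drive $\phi(K_t)-\phi^*$ below $\epsilon^2/(2L_1)$.

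First I would record the immediate consequence of the restarting rule (\ref{restart}): every retained iterate has $\phi<\eta$, so the optimality gap at the start of \emph{every} segment is below $\eta-\phi^*\leq\Delta_\phi$, and in particular the gaps do not accumulate across restarts. The key step is then to show that any segment terminated by a restart is short. If the restart is triggered at the iterate $K_{T+1}$, then $\phi(K_{T+1})\geq\eta$, and combining $\eta-\phi^*\leq\phi(K_{T+1})-\phi^*$ with the single-segment bound above (and the start-gap estimate $<\eta-\phi^*$) yields $\eta-\phi^*\leq 2\kappa\rho^{T}(\eta-\phi^*)$, whence $\rho^{T}\geq 1/(2\kappa)$. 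Using $-\log\rho\geq 1/\sqrt{\kappa}$, each such segment has length at most $\sqrt{\kappa}\log(2\kappa)$, so the $S$ pre-restart segments together cost at most $S\big(\sqrt{\kappa}\log(2\kappa)+1\big)$ iterations.

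For the final segment, whose start gap is again below $\Delta_\phi$, I would require $2\kappa\rho^{t-1}\Delta_\phi\leq\epsilon^2/(2L_1)$; solving shows that $t-1\geq\sqrt{\kappa}\log\!\big(4\kappa L_1\Delta_\phi/\epsilon^2\big)$ iterations suffice for $\|\nabla\phi\|_F\leq\epsilon$. Adding the $S$ short segments, the final segment, and the $S+1$ bookkeeping iterations from restarts and initial points, and merging logarithms through
\[
S\log(2\kappa)+\log\!\big(4\kappa L_1\Delta_\phi/\epsilon^2\big)=\log\!\big(2^{S+2}\kappa^{S+1}L_1\Delta_\phi/\epsilon^2\big),
\]
reproduces the stated threshold; since NAG returns as soon as the gradient test passes, $\|\nabla\phi(K_N)\|_F\leq\epsilon$ holds for every $N$ at least this large.

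I expect the main obstacle to be the short-segment estimate: the point is that a restart can fire only early in a segment, because NAG drives the value below $\eta$ so fast that the overshoot $\phi\geq\eta$ becomes impossible after $\mathcal{O}(\sqrt{\kappa}\log\kappa)$ steps. Propagated over the $S$ restarts, this is exactly what turns the $\kappa$-prefactor of the single-segment bound into the $\kappa^{S+1}$ factor inside the logarithm; tracking the prefactor $2\kappa$ (rather than a universal constant), together with the additive $S+1$ overhead, is what separates the genuine $2^{S+2}\kappa^{S+1}$ dependence from a naive estimate.
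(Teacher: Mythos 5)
Your proof is correct and reproduces the exact stated threshold, but by a genuinely different decomposition than the paper's. The paper never bounds how long an aborted segment can last; instead it chains the per-segment rate multiplicatively across restarts: since each restart resumes from the previous iterate, quadratic growth $\phi(K)-\phi^*\geq\frac{\sigma_1}{2}\Vert K-K^*\Vert_F^2$ converts the value gap at the end of one segment into the squared-distance seed of the next, yielding a final bound of the form $(2\kappa)^{S+1}\rho^{\,N-S-1}\Delta_\phi$ with $\rho=1-1/\sqrt{\kappa}$, after which the factor $(2\kappa)^{S+1}$ is absorbed into the logarithm. You instead \emph{reset} the gap at every restart, using a fact the paper's proof never invokes — that a retained iterate satisfies $\phi<\eta$, so every segment starts with gap at most $\Delta_\phi$ — and you pay for restarts by showing that the trigger $\phi\geq\eta$ can only fire within $\sqrt{\kappa}\log(2\kappa)$ steps of a segment start. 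The two bookkeepings coincide numerically because the logarithm turns the paper's multiplicative penalty $(2\kappa)^{S}$ into your additive cost $S\sqrt{\kappa}\log(2\kappa)$. What each buys: the paper's chaining is agnostic to the restart trigger (it works for any schedule that restarts from the previous iterate, which is why it transfers verbatim to the later LQR application), while your argument exposes the structural reason the scheme wastes few iterations — restarts can only fire early in a segment — and makes the origin of the $\kappa^{S+1}$ factor inside the logarithm transparent. The only item in the paper's proof with no counterpart in yours is the preliminary well-definedness check (the first step after a restart is a gradient step with stepsize at most $2/L_1$, hence cannot immediately re-trigger the rule); under the lemma's standing assumption that at most $S$ restarts occur, your argument does not need it.
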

\begin{proof}
First, we will show that the restarting rule is well defined, i.e., if $K\in\hat{\mathcal{S}}_0$, then
$$\hat{K}={\rm NAG}_1(\phi,K,\epsilon ,L_1,\sigma_1)\in\hat{\mathcal{S}}_0,$$
where $\hat{\mathcal{S}}_0=\{K\in\mathbb{R}^{m\times n}\colon \phi(K)\leq\eta\}$, and ${\rm NAG}_1$ denotes the result of the one-step iteration of Algorithm 1. Obviously, the following equality holds
$$\hat{K}=K-\frac{1+\frac{\sqrt{\kappa}-1}{\sqrt{\kappa}+1}}{L_1}\nabla \phi(K).$$
Again, by denoting $\varphi(t)=\phi(K_t)=\phi(K-t\nabla \phi(K))$, we can demonstrate that for all $0\leq t\leq \frac{2}{L_1}$ we have $K_t\in\hat{\mathcal{S}}_0$. Hence, $\hat{K}\in\hat{\mathcal{S}}_0$ and the restarting rule is well-defined.

Nesterov \cite{ref5} shows that for any $j>1$,
$$\phi(K_j)-\phi(K^*)\leq L_1\left(1-\sqrt{\frac{\sigma_1}{L_1}}\right)^{j-1}\Vert K_1-K^*\Vert^2_F.$$
Let $K_j^{(i)}$ represent the result of the $j$-th algorithm iteration, after the $i$-th execution of the restarting rule but prior to the $(i+1)$-th execution of the restarting rule. Let function $j(i)$ be the number of iterations between the $i$-th execution of the restarting rule and the $(i+1)$-th execution of the restarting rule (or the occurrence of the algorithm's termination condition). Since $\phi$ is strongly convex, the following quadratic growth condition holds \cite{ref5}
$$\phi(K)-\phi(K^*)\geq \frac{\sigma_1}{2}\Vert K-K^*\Vert^2_F.$$
Thus, we have
\begin{align*}
		&\quad \phi(K_j^{(i)})-\phi(K^*)\\
&\leq L_1\left(1-\sqrt{\frac{\sigma_1}{L_1}}\right)^{j-1}\Vert K_1^{(i)}-K^*\Vert^2_F\\
		&= L_1\left(1-\sqrt{\frac{\sigma_1}{L_1}}\right)^{j-1}\Vert K_{j(i-1)}^{(i-1)}-K^*\Vert^2_F\\
		&\leq L_1\frac{2}{\sigma_1}\left(1-\sqrt{\frac{\sigma_1}{L_1}}\right)^{j-1}\left(\phi(K_{j(i-1)}^{(i-1)})-\phi(K^*)\right)\\		
		&\leq \left(\frac{2L_1}{\sigma_1}\right)^i\left(1-\sqrt{\frac{\sigma_1}{L_1}}\right)^{\sum_{p=1}^{i-1}j(p)+j-i}\left(\phi(K_{j(0)}^{(0)})-\phi(K^*)\right)\\
		&\leq \left(\frac{2L_1}{\sigma_1}\right)^{i+1}\left(1-\sqrt{\frac{\sigma_1}{L_1}}\right)^{N-i-1}\left(\phi(K_1)-\phi(K^*)\right)\\
		&\leq 2^{i+1}\kappa^{i+1}\exp(-(N-i-1)\kappa^{-\frac{1}{2}})\Delta_\phi\\
		&\leq 2^{S+1}\kappa^{S+1}\exp(-(N-S-1)\kappa^{-\frac{1}{2}})\Delta_\phi.	
	\end{align*}
Taking any $$N\geq S+1+\sqrt{\kappa}\log\left(\frac{2^{S+2}\kappa^{S+1}L_1\Delta_\phi}{\epsilon^2}\right)$$ yields
$$\phi(K_N)-\phi(K^*)\leq\frac{\epsilon^2}{2L_1}.$$
Noticing that $\Vert \nabla \phi(K)\Vert_F^2\leq 2L_1(\phi(K)-\phi(K^*)) $ \cite{ref5}, we obtain the result.
\end{proof}

Next, we will examine how to find the stationary point of a $\gamma$-semiconvex function $\psi$.
Semiconvex functions have been extensively investigated in the field of nonconvex optimization \cite{Mifflin-1977,Ngai-2023,ref13}, and the fundamental idea is to add a regularizing term $\varphi=\gamma\Vert K-K_j\Vert_F^2$ such that $\psi+\varphi$ is $\gamma$-strongly convex. Semiconvex-NAG is first proposed in \cite{ref13}, and different from \cite{ref13}, the sub-algorithm NAG of Algorithm \ref{alg2} below is executed with restarting rule, requiring additional discussion of complexity analysis (see Theorem \ref{thm7}).

\begin{breakablealgorithm}
    \caption{Semiconvex-NAG}
    \begin{algorithmic}[1]
    \State \textbf{Given} : Object function $\psi$, initial point $K_1$, accuracy $\epsilon$, smoothness parameter $L_1$, semi-convexity parameter $\gamma$
    \State \textbf{Result} : $K_j$
    \State Set $\kappa=\frac{L_1}{\sigma_1}$, and $K_1=y_1$
    \For{$j=1,2,...$}
        \If{$\Vert\nabla \psi(K_j)\Vert<\epsilon$}
           \Return {$K_j$}
        \EndIf
        \State Let $g_j(K)=\psi(K)+\gamma\Vert K-K_j\Vert^2_F$
        \State $\epsilon'=\epsilon\sqrt{\gamma/(50(L_1+2\gamma))}$
        \State $K_{j+1}\leftarrow {\rm NAG}(g_j,K_j,\epsilon',L_1+2\gamma,\gamma)$(with restarting rule (\ref{restart}))
    \EndFor
    \end{algorithmic}
    \label{alg2}
\end{breakablealgorithm}

\begin{assumption}\label{assump2}
  The following operations take $T_{\rm grad}$ time:
  \begin{enumerate}
    \item The evaluation $\nabla h(K)$ for a point $K\in\mathbb{R}^{m\times n}$.
    \item The evaluation of $\nabla^2 h(K)v$ for some direction $v\in\mathbb{R}^{mn}$ and point $K\in\mathbb{R}^{mn}$.
    \item Any arithmetic operation (affine combinations and inner products) of two vectors of dimension at most $m\times n$.
  \end{enumerate}
\end{assumption}
\begin{remark}
  There are a few aspects that require justification in Assumption \ref{assump2}. First, to avoid tensors, we consider the  domain of function $h$ to be $\mathbb{R}^{mn}$ rather than $\mathbb{R}^{m\times n}$. Second, it appears inevitable that we must compute the Hessian of function $h$ to derive the value $\nabla^2 h(K)v$. However, by the definition of  directional derivative of $\nabla h(K)$ in direction $v\in\mathbb{R}^{m\times n}$, we have
  $$\lim_{h\to 0}\frac{\nabla h(K+hv)-\nabla h(K)}{h}=\nabla^2 h(K)v,$$
  and this leads a natural approximation of $\nabla^2 h(K)v$:
  $$p=\frac{\nabla h(K+hv)-\nabla h(K)}{h}$$
for sufficient small $h$ that is Hessian-free.
  In addition, \cite{ref13} provides an error analysis for the abovementioned approximation method and states that the method permits adequately precise calculations when $h$ is sufficiently small.
\end{remark}
\begin{theorem}\label{thm7}
  Assume that the execution of restarting rule (\ref{restart}) is no more than $S$ times for each call of algorithm NAG. Let $\sigma_1>0$ and $\psi:\mathbb{R}^{m\times n}\to\mathbb{R}$ be $\sigma_1$-semiconvex and $L_1$-smooth, and let $\gamma\in[\sigma_1,L_1]$. Then, Semiconvex-NAG$(\psi,K_1,\epsilon,\gamma,L_1)$ returns a point $K$ such that $\Vert \nabla \psi(K)\Vert_F\leq\epsilon$ and
  \begin{equation}\label{e1}
  \psi(K_1)-\psi(K)\geq \min\left\{\gamma\Vert K-K_1\Vert_F^2,\frac{\epsilon}{\sqrt{10}}\Vert K-K_1\Vert_F\right\}
  \end{equation}
  in time
  \begin{equation}\label{time1}
  \begin{aligned}
    \mathcal{O}&\left({T_{\rm grad}}\left(\sqrt{\frac{L_1}{\gamma}}+\frac{\sqrt{\gamma L_1}}{\epsilon^2}(\psi(K_1)-\psi(K))\right)\log\left(\frac{L_1^{S+3}\Delta_\psi}{\gamma^{S+2}\epsilon^2}\right)\right).
    \end{aligned}
  \end{equation}
\end{theorem}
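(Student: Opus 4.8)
The plan is to treat Algorithm \ref{alg2} as an inexact proximal scheme and to build everything from the guarantee that Lemma \ref{lemma6} supplies for each inner call. First I would verify that every inner call is legitimate: since $\psi$ is $\sigma_1$-semiconvex and $\gamma\in[\sigma_1,L_1]$, the regularized model $g_j(K)=\psi(K)+\gamma\|K-K_j\|_F^2$ satisfies $\nabla^2 g_j=\nabla^2\psi+2\gamma I\succeq(2\gamma-\sigma_1)I\succeq\gamma I$ and $\nabla^2 g_j\preceq(L_1+2\gamma)I$, so $g_j$ is $\gamma$-strongly convex and $(L_1+2\gamma)$-smooth. Hence Lemma \ref{lemma6} applies to ${\rm NAG}(g_j,K_j,\epsilon',L_1+2\gamma,\gamma)$ with condition number $\kappa_g=(L_1+2\gamma)/\gamma$, and it returns $K_{j+1}$ with $\|\nabla g_j(K_{j+1})\|_F\leq\epsilon'$ after $N_j=\mathcal{O}(S+\sqrt{\kappa_g}\log(\cdot))$ inner iterations. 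The first assertion $\|\nabla\psi(K)\|_F\leq\epsilon$ is then immediate, since Algorithm \ref{alg2} returns $K$ only through its stopping test.

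Next I would isolate the two per-step estimates on which both remaining claims rest. (i) Descent: the restarting rule (\ref{restart}) inside NAG forces $g_j(K_{j+1})\leq g_j(K_j)=\psi(K_j)$, and since $g_j(K_{j+1})=\psi(K_{j+1})+\gamma\|K_{j+1}-K_j\|_F^2$ this rearranges to
\[
\psi(K_j)-\psi(K_{j+1})\geq\gamma\|K_{j+1}-K_j\|_F^2.
\]
(ii) Displacement at non-terminal iterates: writing $\nabla\psi(K_{j+1})=\nabla g_j(K_{j+1})-2\gamma(K_{j+1}-K_j)$ and using $\|\nabla g_j(K_{j+1})\|_F\leq\epsilon'$ gives $\|\nabla\psi(K_{j+1})\|_F\leq\epsilon'+2\gamma\|K_{j+1}-K_j\|_F$. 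Whenever $K_{j+1}$ is not the returned point the loop did not stop, so $\|\nabla\psi(K_{j+1})\|_F\geq\epsilon$ and therefore $\|K_{j+1}-K_j\|_F\geq(\epsilon-\epsilon')/(2\gamma)$. With the prescribed $\epsilon'=\epsilon\sqrt{\gamma/(50(L_1+2\gamma))}$ and $L_1+2\gamma\geq 3\gamma$ one checks $\epsilon'\leq\epsilon/\sqrt{150}$, whence $(\epsilon-\epsilon')/(2\gamma)\geq\epsilon/(\gamma\sqrt{10})$, so that $\gamma\|K_{j+1}-K_j\|_F^2\geq\frac{\epsilon}{\sqrt{10}}\|K_{j+1}-K_j\|_F$ on every non-terminal step.

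The descent guarantee (\ref{e1}) would then follow by telescoping (i) into $\psi(K_1)-\psi(K)\geq\gamma\sum_j\|\Delta_j\|_F^2$ with $\Delta_j=K_{j+1}-K_j$, together with $\|K-K_1\|_F\leq\sum_j\|\Delta_j\|_F$. If only one step occurs, then $\|K-K_1\|_F=\|\Delta_1\|_F$ and (i) gives exactly the first branch $\gamma\|K-K_1\|_F^2$. If several steps occur, every step except the final (terminal) one obeys (ii), so $\gamma\|\Delta_j\|_F^2\geq\frac{\epsilon}{\sqrt{10}}\|\Delta_j\|_F$ for those steps and summing targets the second branch $\frac{\epsilon}{\sqrt{10}}\|K-K_1\|_F$. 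The delicate point — what I expect to be the main obstacle — is the single final step producing the returned iterate: it carries no gradient lower bound, so its contribution to $\|K-K_1\|_F$ is not automatically paid for by $\gamma\|\Delta\|_F^2$. This is precisely why the statement is a minimum of two terms and why $\epsilon'$ is calibrated with the factor $50$: the surplus on the non-terminal steps (the gap between $(\epsilon-\epsilon')/(2\gamma)$ and $\epsilon/(\gamma\sqrt{10})$) must be shown to absorb the unbounded terminal step, reverting to the quadratic branch when the terminal displacement is comparable to the whole. I would carry out this bookkeeping by splitting on whether the terminal step length lies above or below $\epsilon/(\gamma\sqrt{10})$.

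Finally, for the runtime (\ref{time1}) I would bound the number $J$ of outer iterations. Combining the telescoped descent with (ii), each non-terminal step removes at least $\gamma\big((\epsilon-\epsilon')/(2\gamma)\big)^2=\Theta(\epsilon^2/\gamma)$ from $\psi$, giving $J=\mathcal{O}\big(1+\gamma(\psi(K_1)-\psi(K))/\epsilon^2\big)$; in particular $J$ is finite, so the loop terminates. Multiplying $J$ by the per-call cost $\mathcal{O}(T_{\rm grad}\sqrt{\kappa_g}\log(\cdot))$ from Lemma \ref{lemma6}, with $\sqrt{\kappa_g}=\mathcal{O}(\sqrt{L_1/\gamma})$ and each inner iteration costing $\mathcal{O}(T_{\rm grad})$ since the Hessian-free evaluation of $\nabla g_j$ reduces to one evaluation of $\nabla\psi$, produces the two terms $\sqrt{L_1/\gamma}+\frac{\sqrt{\gamma L_1}}{\epsilon^2}(\psi(K_1)-\psi(K))$. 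It remains to simplify the logarithm: substituting $\kappa_g=(L_1+2\gamma)/\gamma$, $\epsilon'^2=\Theta(\epsilon^2\gamma/L_1)$, and $\Delta_{g_j}\leq\Delta_\psi$ into the argument $2^{S+2}\kappa_g^{S+1}(L_1+2\gamma)\Delta_{g_j}/\epsilon'^2$ of Lemma \ref{lemma6} collapses it to $\mathcal{O}\big(\log(L_1^{S+3}\Delta_\psi/(\gamma^{S+2}\epsilon^2))\big)$, which matches (\ref{time1}).
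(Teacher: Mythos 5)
Your proposal is correct, but it takes a genuinely different route from the paper. The paper's own proof is a citation-plus-accounting argument: it invokes \cite{ref13} both for the outer iteration bound $j_*\leq 1+\frac{5\gamma}{\epsilon^2}[\psi(K_1)-\psi(K_{j_*})]$ and for the descent inequality (\ref{e1}), and its only new work is bounding the cost of each inner call via Lemma \ref{lemma6} (NAG with restarts) and collapsing the resulting logarithm --- exactly the content of your final paragraph, which matches the paper's almost line by line (including $\Delta_{g_j}=\mathcal{O}(\Delta_\psi)$ and the exponents $S+3$, $S+2$). You instead re-derive the inner-loop structure from scratch: the clean per-step descent $\psi(K_j)-\psi(K_{j+1})\geq\gamma\Vert K_{j+1}-K_j\Vert_F^2$, which is available precisely because the restarting rule (\ref{restart}) keeps the returned inner iterate in the initial sublevel set of $g_j$ (in \cite{ref13}, without restarts, this inequality carries an additive error of order $\epsilon'^2/\gamma$), the displacement lower bound on non-terminal steps, the outer iteration count, and (\ref{e1}) itself. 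This buys something real: the paper cites \cite{ref13} for (\ref{e1}) even though the algorithm analyzed here is a \emph{modified} one (restarts added), so strictly speaking one must check the modification does not invalidate the cited proof; your self-contained derivation performs that check and shows the restart rule in fact simplifies it.

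On the step you flagged as the main obstacle and left as a plan --- paying for the terminal step in (\ref{e1}) --- your estimates do suffice, and the surplus you identified is exactly enough; the clean way to see it is to split on $J=1$ versus $J\geq 2$ rather than on the terminal step length. For $J=1$ your inequality (i) gives the quadratic branch exactly. For $J\geq 2$, write $a=\sum_{j<J}\Vert\Delta_j\Vert_F$, $b=\Vert\Delta_J\Vert_F$ and $c_0=\frac{1}{2}\bigl(1-\tfrac{1}{\sqrt{150}}\bigr)\approx 0.459$; your (i)--(ii) give $\psi(K_1)-\psi(K)\geq c_0\epsilon a+\gamma b^2$ with $a\geq c_0\epsilon/\gamma$. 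The terminal step's deficit against the linear branch satisfies $\frac{\epsilon}{\sqrt{10}}b-\gamma b^2\leq\frac{\epsilon^2}{40\gamma}$, while the non-terminal surplus satisfies $\bigl(c_0-\tfrac{1}{\sqrt{10}}\bigr)\epsilon a\geq\bigl(c_0-\tfrac{1}{\sqrt{10}}\bigr)c_0\frac{\epsilon^2}{\gamma}\approx 0.066\,\frac{\epsilon^2}{\gamma}>\frac{\epsilon^2}{40\gamma}$, so $\psi(K_1)-\psi(K)\geq\frac{\epsilon}{\sqrt{10}}(a+b)\geq\frac{\epsilon}{\sqrt{10}}\Vert K-K_1\Vert_F$ whenever $J\geq 2$. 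Thus the min in (\ref{e1}) is really ``one step vs.\ more than one step,'' and your bookkeeping closes with no further ideas needed.
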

\begin{proof}
By denoting the time at which routine terminates as $j_*$, \cite{ref13} shows
$$j_*\leq 1+\frac{5\gamma}{\epsilon^2}[\psi(K_1)-\psi(K_{j_*})]\leq 1+\frac{5\gamma}{\epsilon^2}\Delta_\psi.$$
We may apply Lemma \ref{lemma6} with accuracy $\epsilon'=\epsilon\sqrt{\gamma/(50(L_1+2\gamma))}$ to bound the running time of each call of algorithm NAG by
\begin{equation*}
  \begin{aligned}
  \mathcal{O}\left(T_{\rm grad}\left(S+1+\sqrt{\frac{L_1+2\gamma}{\gamma}}\log\left(\frac{2^{S+2}(L_1+2\gamma)^{S+2}\Delta_{g_j}}{\gamma^{S+1}\epsilon'^2}\right)\right)\right).
  \end{aligned}
\end{equation*}
Using the fact that $\gamma\leq L_1$ and $\mathcal{O}(\Delta_{g_j})=\mathcal{O}(\Delta_\psi)$, we may ulteriorly bound the running time of each call by
$$\mathcal{O}\left(T_{\rm grad}\sqrt{\frac{L_1}{\gamma}}\log\left(\frac{L_1^{S+3}\Delta_\psi}{\gamma^{S+2}\epsilon^2}\right)\right).$$
The algorithm Semiconvex-NAG performs at most $j_*$ iterations, which yields the running time (\ref{time1}). Moreover, the inequality (\ref{e1}) is shown in \cite{ref13}.
\end{proof}

\subsection{Negative Curvature Exploitation}\label{section5.2}

The Negative Curvature Descent (NCD) method has been employed to develop deterministic or stochastic algorithms for nonconvex optimization with the benefits of escaping from non-degenerate saddle points and searching for a region where the objective function is semiconvex (equivalently, almost-convex) that enables accelerated gradient methods \cite{Liu-2018}. NCD is first proposed in \cite{Goldfarb-1980} and widely studied recently \cite{Jin-2018,Liu-2018,ref13}. In this subsection, basic and classic results, mainly from \cite{Goldfarb-1980,ref13}, are introduced for the convenience of the discussion in Section \ref{section5.3}.

\begin{lemma}\label{lemma7}
Let $K\in\mathbb{R}^{m\times n}$, $v\in\mathbb{R}^{mn}$ and  $\psi:\mathbb{R}^{m\times n}\longmapsto\mathbb{R}$ be $\mathcal{C}^2$-continuous function. Then, $\psi$ can be equivalently regarded as a function $\psi:\mathbb{R}^{mn}\longmapsto\mathbb{R}$. Moreover, we have
$$v^{\rm T}\nabla^2 \psi({\rm vec}(K))v=\nabla^2 \psi(K)[{\rm matrix}(v),{\rm matrix}(v)].$$
\end{lemma}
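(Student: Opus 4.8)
The plan is to reduce the statement to an equality of ordinary second derivatives of a one-variable function obtained by restricting $\psi$ to a line, exploiting that ${\rm vec}$ is a linear bijection. First I would make precise the phrase ``$\psi$ can be equivalently regarded as a function on $\mathbb{R}^{mn}$'': writing $\Phi={\rm matrix}\colon\mathbb{R}^{mn}\to\mathbb{R}^{m\times n}$ for the inverse of ${\rm vec}$, the function on vectors is exactly the composition $\tilde\psi=\psi\circ\Phi$. Since $\Phi$ is linear, $\Phi({\rm vec}(K))=K$ and $\Phi(w+sv)=\Phi(w)+s\Phi(v)$ for every scalar $s$; this linearity is the structural fact that drives the whole argument.

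Next I would fix the direction $v\in\mathbb{R}^{mn}$, set $E={\rm matrix}(v)=\Phi(v)$, and introduce the scalar function $g(s)=\tilde\psi({\rm vec}(K)+sv)$. By linearity of $\Phi$ we get $g(s)=\psi\big(\Phi({\rm vec}(K)+sv)\big)=\psi(K+sE)$. The two sides of the claimed identity are then merely two evaluations of $g''(0)$. On the vector side, the standard second-order directional-derivative formula gives $g'(s)=\nabla\tilde\psi({\rm vec}(K)+sv)^{\rm T}v$ and hence $g''(0)=v^{\rm T}\nabla^2\tilde\psi({\rm vec}(K))v$. On the matrix side, the bracket notation is by definition $\nabla^2\psi(K)[E,E]=\frac{d^2}{ds^2}\psi(K+sE)\big|_{s=0}=g''(0)$, as follows from $\frac{d}{ds}\psi(K+sE)=\langle\nabla\psi(K+sE),E\rangle$. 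Equating the two expressions for $g''(0)$ yields the result, after identifying $\tilde\psi$ with $\psi$ as the paper does.

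The only point needing care is confirming that the bracket notation $\nabla^2\psi(K)[\cdot,\cdot]$ and the quadratic form $v^{\rm T}\nabla^2\psi({\rm vec}(K))v$ denote the same second-order object, and this is where I would be explicit. Via the coordinate expansion: if $w={\rm vec}(K)$ has components $w_a$ indexed through the column-stacking bijection $a\leftrightarrow(i,j)$, then $\tilde\psi$ and $\psi$ share identical values under this relabelling, so $\partial^2\tilde\psi/\partial w_a\partial w_b=\partial^2\psi/\partial K_{ij}\partial K_{kl}$ whenever $a\leftrightarrow(i,j)$ and $b\leftrightarrow(k,l)$. Since the components of $v$ satisfy $v_a=E_{ij}=({\rm matrix}(v))_{ij}$ under the same relabelling, the sums $\sum_{a,b}(\partial^2\tilde\psi/\partial w_a\partial w_b)\,v_av_b$ and $\sum_{i,j,k,l}(\partial^2\psi/\partial K_{ij}\partial K_{kl})\,E_{ij}E_{kl}$ are term-by-term equal, and these are precisely the left- and right-hand sides. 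No genuine obstacle arises beyond this bookkeeping: because ${\rm vec}$ is a Frobenius-to-Euclidean isometry, no spurious Jacobian factor appears, and the $\mathcal{C}^2$ hypothesis already supplies the continuity and symmetry of the Hessian needed to justify the restriction-to-a-line computation in the preceding step.
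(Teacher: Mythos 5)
Your proof is correct and complete: identifying both sides with $g''(0)$ for $g(s)=\psi(K+sE)$, where $E={\rm matrix}(v)$, together with the coordinate relabelling argument, is precisely the routine verification needed, and it matches how the paper itself uses the bracket notation (e.g., in the appendix, $\nabla^2 f(\bar K)[E,E]$ is treated as $\frac{{\rm d}^2}{{\rm d}\eta^2}\big|_{\eta=0}f(\bar K+\eta E)$). The paper actually omits this proof entirely, declaring it easy, so your write-up simply supplies the intended bookkeeping; the only cosmetic remark is that the final appeal to ${\rm vec}$ being a Frobenius-to-Euclidean isometry is unnecessary --- linearity of ${\rm vec}$ alone rules out any Jacobian factor, since both sides evaluate the same pulled-back bilinear form on consistently transformed directions.
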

\begin{proof}
  The proof is easy, and we omit the proof here.
\end{proof}
\begin{breakablealgorithm}
    \caption{Negative Curvature Descent (NCD)}
    \begin{algorithmic}[1]
    \State \textbf{Given} : Object function $\psi$, initial point $K_1$, probability accuracy $\delta$, smoothness parameter $L_2$, semi-convexity parameter $\alpha$, optimal gap $\Delta_\psi$
    \State \textbf{Result} : $K_j$
    \State Set $\delta'=\delta/(1+L_2^2\Delta_\psi/\alpha^3)$
    \For{$j=1,2,...$}
        \State Find a vector $v_j\in\mathbb{R}^{mn}$ such that $\Vert v_j\Vert =1$ and, with probability at least $1-\delta'$ by setting $\hat{v}_j={\rm matrix}(v_j)$,
	$$\lambda_1(\nabla^2 \psi({\rm vec}(K_j)))\geq \nabla^2 \psi(K_j)[\hat{v}_j,\hat{v}_j]-\alpha/2$$
	using leading eigenvector computation\
        \If{$\nabla^2 \psi(K_j)[\hat{v}_j,\hat{v}_j]\leq-\alpha/2$}
        \begin{equation*}
          \begin{aligned}
          K_{j+1}\leftarrow K_j-\frac{2\left|\nabla^2 \psi(K_j)[\hat{v}_j,\hat{v}_j]\right|}{L_2}
          {\rm sign}({\rm Tr}(\hat{v}_j^{\rm T}\nabla \psi(K_j)))\hat{v}_j
          \end{aligned}
        \end{equation*}
        \Else
            \State \Return {$K_j$}
        \EndIf
    \EndFor
    \end{algorithmic}
\end{breakablealgorithm}

The fifth step of the preceding routine is an additive $\epsilon$-approximate smallest eigenvector-seeking ($\epsilon$-ASES) problem, which appears forlorn and can be reformulated based on Lemma \ref{lemma7} as finding vector $v$ for a function $\rho:\mathbb{R}^d\longmapsto\mathbb{R}$ such that $v^{\rm T}\nabla^2 \rho(x)v\leq\lambda_1(\nabla ^2\rho(x))+\epsilon$. Nonetheless, if $\rho$ is additionally $L$-smooth, an $\epsilon$-ASES problem can be converted into a relative $\epsilon$-approximate leading eigenvector of a positive semidefinite matrix, which has been extensively studied. The following lemma gives a complexity analysis of the ASES problem.
\begin{lemma}[\cite{ref13}]\label{lemma8}
Let $\alpha\in(0,L]$. In the setting of the previous paragraph, there exists an algorithm that given $x\in\mathbb{R}^d$ computes, with probability at least $1-\delta$, an additive $\alpha$-approximate smallest eigenvector $v$ of $\nabla^2\rho(x)$ in time $\mathcal{O}(T_{\rm grad}\sqrt{\frac{L}{\alpha}}\log(\frac{d}{\delta}))$.
\end{lemma}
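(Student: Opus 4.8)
The plan is to reduce the additive $\alpha$-approximate \emph{smallest}-eigenvector problem for the (indefinite) Hessian $H \doteq \nabla^2\rho(x)$ to a relative approximate \emph{leading}-eigenvector problem for a positive semidefinite matrix, exactly as anticipated in the paragraph preceding the statement, and then to run a Krylov-subspace iteration that accesses $H$ only through Hessian–vector products. First I would invoke the $L$-smoothness of $\rho$: since $-LI \preceq H \preceq LI$, the shifted matrix $M \doteq LI - H$ is positive semidefinite with $0 \preceq M \preceq 2LI$. Its eigenvalues are $L-\lambda_i(H)$, so $\lambda_{\max}(M) = L - \lambda_1(H)$, and a unit vector $v$ satisfies $v^{\rm T}Mv \geq \lambda_{\max}(M) - \alpha$ if and only if $v^{\rm T}Hv \leq \lambda_1(H) + \alpha$. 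Thus an additive-$\alpha$ smallest eigenvector of $H$ is precisely an additive-$\alpha$ leading eigenvector of the PSD matrix $M$.

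The second step is to observe that every matrix–vector product $Mv = Lv - \nabla^2\rho(x)v$ costs $\mathcal{O}(T_{\rm grad})$: the Hessian–vector product $\nabla^2\rho(x)v$ is computed by the finite-difference (Hessian-free) formula $\frac{\nabla\rho(x+hv)-\nabla\rho(x)}{h}$ from the earlier remark, which uses two gradient evaluations, while the scaling and subtraction are arithmetic operations, all covered by Assumption \ref{assump2}. Consequently any method touching $M$ solely through such products inherits a per-iteration cost of $\mathcal{O}(T_{\rm grad})$.

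The third step is to run a randomized Lanczos (or block-Krylov) iteration on $M$ from a Gaussian start and invoke the classical gap-free guarantee: for a PSD matrix, $\mathcal{O}(\epsilon^{-1/2}\log(d/\delta))$ iterations return, with probability at least $1-\delta$, a unit vector $v$ with $v^{\rm T}Mv \geq (1-\epsilon)\lambda_{\max}(M)$. Taking $\epsilon = \alpha/\lambda_{\max}(M)$ — the case $\lambda_{\max}(M) < \alpha$ being trivial, since then every unit vector already meets the additive requirement as $M \succeq 0$ — converts this relative bound into the additive bound $v^{\rm T}Mv \geq \lambda_{\max}(M)-\alpha$ at iteration count $\mathcal{O}(\sqrt{\lambda_{\max}(M)/\alpha}\,\log(d/\delta)) = \mathcal{O}(\sqrt{L/\alpha}\,\log(d/\delta))$, using $\lambda_{\max}(M)\leq 2L$. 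Multiplying this by the per-iteration cost $\mathcal{O}(T_{\rm grad})$ and translating back through the reduction of the first step yields the claimed running time $\mathcal{O}(T_{\rm grad}\sqrt{L/\alpha}\,\log(d/\delta))$ for an additive $\alpha$-approximate smallest eigenvector of $H$.

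I expect the main obstacle to be the gap-free convergence guarantee for the Krylov iteration, namely that $\mathcal{O}(\sqrt{L/\alpha}\,\log(d/\delta))$ steps suffice \emph{without} assuming any spectral gap of $M$. This rests on a Chebyshev-polynomial amplification argument (the degree-$t$ polynomial separates the top of the spectrum at a rate governed by $\sqrt{\|M\|/\alpha}$) together with an anti-concentration estimate for the random initialization, which lower-bounds the overlap of the start vector with the leading eigenspace and supplies both the $\log(d/\delta)$ factor and the success probability $1-\delta$. Since this is exactly the estimate quoted from \cite{ref13}, I would cite that analysis rather than reproduce it; the genuinely new bookkeeping here is only the PSD reduction $M = LI - H$ and the verification that the finite-difference Hessian–vector product keeps the per-iteration cost at $\mathcal{O}(T_{\rm grad})$.
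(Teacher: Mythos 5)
Your proposal is correct and follows essentially the same route as the paper, which states this lemma as a citation to \cite{ref13} and sketches exactly your reduction in the preceding paragraph: shift the Hessian to the positive semidefinite matrix $LI - \nabla^2\rho(x)$, note that each matrix--vector product costs $\mathcal{O}(T_{\rm grad})$ under Assumption \ref{assump2}, and invoke the gap-free randomized Lanczos/Krylov guarantee for the relative leading-eigenvector problem. Your additional bookkeeping (the additive-to-relative conversion via $\epsilon = \alpha/\lambda_{\max}(M)$, the bound $\lambda_{\max}(M)\leq 2L$, and the trivial case $\lambda_{\max}(M)<\alpha$) is sound and fills in details the paper delegates to \cite{ref13}.
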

\begin{theorem}\label{thm8}
  Let $\alpha\in(0,L_1]$, $K_1\in\mathbb{R}^{m\times n}$ and function $\psi:\mathbb{R}^{m\times n}\longmapsto\mathbb{R}$ be $L_1$-smooth with $L_2$-Lipschitz continuous Hessian. If we call NCD$(\psi,K_1,\delta,L_2,\alpha,\Delta_\psi)$, then the algorithm terminates at iteration $j$ for some
  \begin{equation}\label{NCD1}
		j\leq 1+\frac{12L_2^2(\psi(K_1)-\psi(K_j))}{\alpha^3}\leq 1+\frac{12L_2^2\Delta_\psi}{\alpha^3},
  \end{equation}
  and with probability at least $1-\delta$,
  \begin{equation}\label{NCD2}
		\lambda_1(\nabla^2 \psi({\rm vec}(K_j)))\geq -\alpha.
  \end{equation}
  Furthermore, each iteration requires time at most
  \begin{equation}\label{NCD3}
		\mathcal{O}\left({ T_{\rm grad}}\sqrt{\frac{L_1}{\alpha}}\log\left(\frac{mn}{\delta}\left(1+\frac{12L_2^2\Delta_\psi}{\alpha^3}\right)\right)\right).
  \end{equation}
\end{theorem}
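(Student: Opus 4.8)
The plan is to prove the three claims in turn: the iteration bound \eqref{NCD1} deterministically via a per-step decrease estimate, the spectral guarantee \eqref{NCD2} via a union bound over iterations, and the per-iteration cost \eqref{NCD3} as a direct invocation of Lemma \ref{lemma8}. The technical core is the decrease estimate, so I would start there. Fix an iteration $j$ at which the ``if'' branch executes, so that $\lambda\doteq\nabla^2\psi(K_j)[\hat v_j,\hat v_j]\le-\alpha/2$, and write the displacement as $\Delta=K_{j+1}-K_j=-\tfrac{2|\lambda|}{L_2}\,s\,\hat v_j$ with $s={\rm sign}({\rm Tr}(\hat v_j^{\rm T}\nabla\psi(K_j)))$; since $\Vert\hat v_j\Vert_F=\Vert v_j\Vert=1$ we have $\Vert\Delta\Vert_F=2|\lambda|/L_2$. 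The $L_2$-Lipschitz Hessian yields the third-order Taylor bound
$$\psi(K_{j+1})\le\psi(K_j)+\langle\nabla\psi(K_j),\Delta\rangle+\tfrac12\nabla^2\psi(K_j)[\Delta,\Delta]+\tfrac{L_2}{6}\Vert\Delta\Vert_F^3.$$
The sign choice makes the first-order term $-\tfrac{2|\lambda|}{L_2}\,|{\rm Tr}(\hat v_j^{\rm T}\nabla\psi(K_j))|\le0$; the second-order term equals $\tfrac{2\lambda^3}{L_2^2}=-\tfrac{2|\lambda|^3}{L_2^2}$ because $\lambda<0$; and the remainder equals $\tfrac{4|\lambda|^3}{3L_2^2}$. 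Summing gives $\psi(K_{j+1})-\psi(K_j)\le-\tfrac{2|\lambda|^3}{3L_2^2}\le-\tfrac{\alpha^3}{12L_2^2}$, where the last step uses $|\lambda|\ge\alpha/2$.

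Telescoping this inequality over the $j-1$ steps taken before termination gives $\psi(K_1)-\psi(K_j)\ge(j-1)\,\alpha^3/(12L_2^2)$, which rearranges to the first inequality in \eqref{NCD1}; the second inequality then follows from $\psi(K_1)-\psi(K_j)\le\Delta_\psi$. I would stress that this entire argument is deterministic: whenever a step is taken, $\lambda\le-\alpha/2$ holds by the branch condition, so the decrease is guaranteed independently of whether the eigenvector routine produced a genuine approximate smallest eigenvector. Consequently the iteration count is bounded by $J\doteq1+12L_2^2\Delta_\psi/\alpha^3$ with certainty.

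For the spectral guarantee \eqref{NCD2}, I would union-bound the failure probability of the $(\alpha/2)$-ASES subroutine over all iterations. Since there are at most $J$ iterations deterministically and each eigenvector computation fails with probability at most $\delta'$, the probability that some computation fails is at most $J\delta'$; with the algorithm's choice $\delta'=\delta/(1+L_2^2\Delta_\psi/\alpha^3)$ this is $\le12\delta$, which is absorbed into $\delta$ up to an absolute constant (one could alternatively set $\delta'=\delta/J$ to reach exactly $1-\delta$). On the complementary event every $\hat v_j$ satisfies $\lambda_1(\nabla^2\psi({\rm vec}(K_j)))\ge\nabla^2\psi(K_j)[\hat v_j,\hat v_j]-\alpha/2$, using the matrix/vector Hessian identity of Lemma \ref{lemma7}. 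At the terminating iteration the ``else'' branch fires, i.e. $\nabla^2\psi(K_j)[\hat v_j,\hat v_j]>-\alpha/2$, and combining the two inequalities gives $\lambda_1(\nabla^2\psi({\rm vec}(K_j)))>-\alpha/2-\alpha/2=-\alpha$, which is \eqref{NCD2}.

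Finally, \eqref{NCD3} follows by recasting step five as an additive $(\alpha/2)$-approximate smallest eigenvector problem for the $L_1$-smooth function $\psi$ on $\mathbb{R}^{mn}$ (via Lemma \ref{lemma7}) and applying Lemma \ref{lemma8} with $L=L_1$, accuracy $\alpha/2\in(0,L_1]$, dimension $d=mn$, and failure probability $\delta'$; this costs $\mathcal{O}(T_{\rm grad}\sqrt{L_1/(\alpha/2)}\log(mn/\delta'))$, and substituting $\delta'$ and absorbing constants (both the $\sqrt2$ and the $1$ versus $12$ inside the logarithm) recovers \eqref{NCD3}. I expect the main obstacle to be the decrease estimate of the first step: the step length $2|\lambda|/L_2$ and the sign correction must be matched exactly to the cubic remainder so that the combined second- plus third-order contribution is a definite negative multiple of $|\lambda|^3/L_2^2$, and pinning down the precise constant $12$ (rather than merely some constant) is where the bookkeeping is most delicate.
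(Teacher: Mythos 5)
Your proposal is correct and takes essentially the same route as the paper's proof (which is itself only an outline deferring to Lemma 4.2 of \cite{ref13}): the cubic Taylor bound from the $L_2$-Lipschitz Hessian gives the per-step decrease $\alpha^3/(12L_2^2)$, telescoping yields \eqref{NCD1}, and \eqref{NCD2}--\eqref{NCD3} follow from the branch condition together with Lemma \ref{lemma8}. Your write-up is in fact more complete than the paper's, which dismisses \eqref{NCD2} and \eqref{NCD3} as ``straightforward'' and omits the union-bound bookkeeping, including the factor-of-$12$ mismatch in the algorithm's choice of $\delta'$ that you correctly flag and repair.
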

\begin{proof}
The proof's specifics can be found in Lemma 4.2 of \cite{ref13}, and for the sake of completeness, we only present the proof's outline here. Assume that the method has not terminated at iteration $k$. By denoting the step size used at iteration $k$ as $\eta_k$, i.e.,
$$\eta_k=\frac{2\left.|\nabla^2 \psi(K_k)[\hat{v}_k,\hat{v}_k]\right|}{L_2}\\
          {\rm sign}({\rm Tr}(\hat{v}_k^{\rm T}\nabla \psi(K_k))),$$
we have the following inequality according to the $L_2$-Lipschitz continuity of Hessian
\begin{equation*}
\begin{aligned}
&\left\vert \psi(K_k-\eta_k\hat{v}_k)-\psi(K_k)+\eta_k{\rm Tr}\left(\hat{v}_k^{\rm T}\nabla\psi(K_k)\right)-\frac{1}{2}\eta_k^2\nabla^2\psi(K_k)[\hat{v}_k,\hat{v}_k]\right\vert\leq\frac{L_2}{6}\Vert\eta_k\hat{v}_k\Vert_F^3.
\end{aligned}
\end{equation*}
We rearrange the above inequality to obtain
\begin{equation}\label{thm8result}
\psi(K_{k+1})-\psi(K_k)\leq-\frac{\alpha^3}{12L_2^2}.
\end{equation}
Telescoping the preceding equation for $k=1,2,\dots,j-1$, we conclude that at the final iteration
$$\Delta_\psi\geq\frac{\alpha^3}{12L_2^2}(j-1);$$
this gives the bound (\ref{NCD1}).

(\ref{NCD2}) and (\ref{NCD3}) are straightforward based on the definition of the algorithm NCD and Lemma \ref{lemma8}.
\end{proof}

\subsection{An Acceleration Framework of OLQR Problem}\label{section5.3}

We have introduced the two subroutines Semiconvex-NAG and NCD above, which only take into consideration the general nonconvex functions and ignore the unique structure of OLQR problem. In order to provide an accelerated framework for OLQR problem, we will  combine the aforementioned subroutines while taking advantage of specific properties of OLQR problem. In this subsection, we first review the fundamental properties of OLQR problem and provide a few notations.
\begin{figure}[htbp]
\centerline{\includegraphics[width=0.45\textwidth]{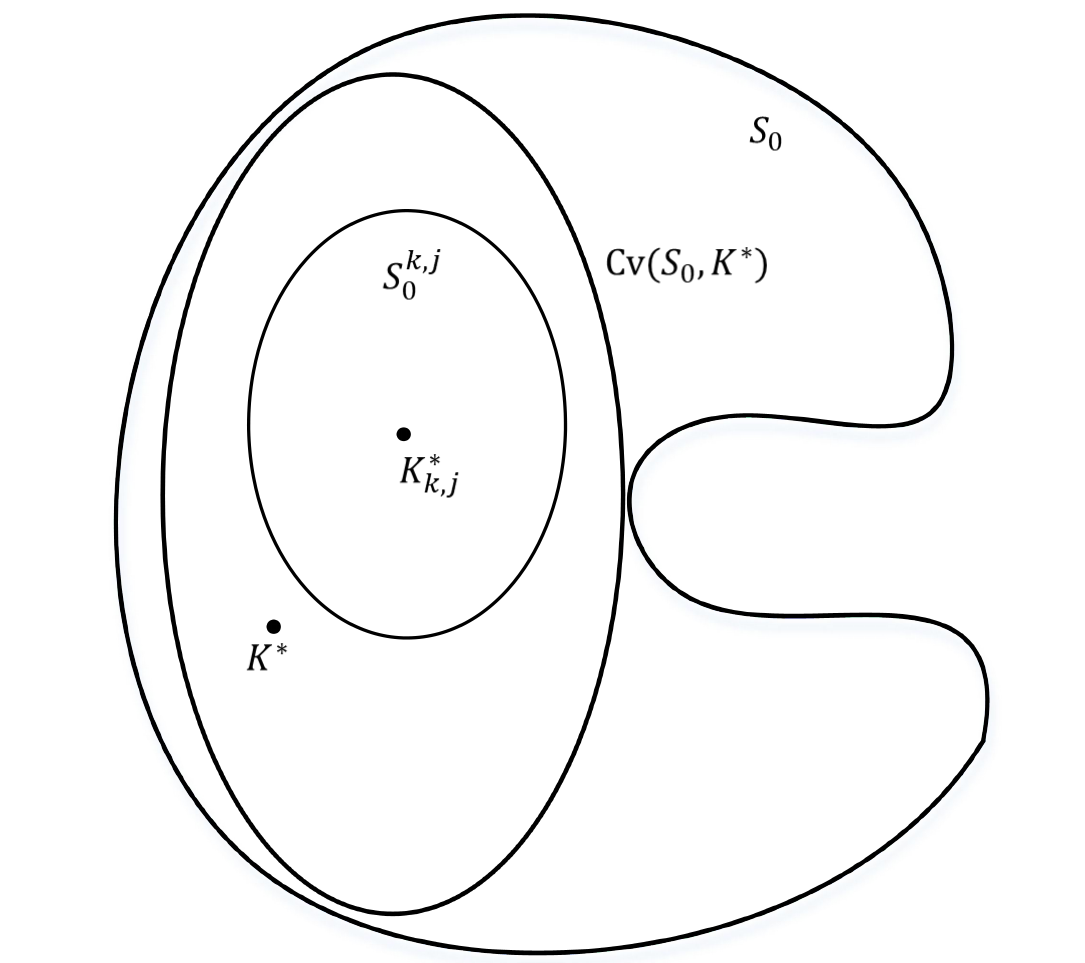}}
\caption{Structure of OLQR problem}
\label{fig2}
\end{figure}

Let $f(K)$ be OLQR cost of problem (\ref{eq7}). In Figure \ref{fig2}, $\mathcal{S}_0$ is given by the sublevel set $\mathcal{S}_0=\{K\in\mathcal{S}\colon f(K)\leq f(K_0)=\varrho\}$. Noticing that $\mathcal{S}_0$ may be not connected and without distinction, we refer to $\mathcal{S}_0$ as one of the sublevel set's connected components. $K^*\in\mathcal{S}_0$ is a  stationary point of $f(K)$; for any $K$, ${\rm Cv}(\mathcal{S}_0,K)$ is a convex set which is contained in $\mathcal{S}_0$ and contains $K$, at the meantime, we want it to be as large as feasible. The set $\mathcal{S}_0^{k,j}$ and point $K_{k,j}^*$ will be defined later. Hence, in ${\rm Cv}(\mathcal{S}_0,K^*)$, $f(K)$ is $L_1(\varrho)$-smooth and has $L_2(\varrho)$-Lipschitz continuous Hessian with
\begin{align*}
L_1(\varrho)&=\frac{2\varrho}{\lambda_1(Q)}(\lambda_n(R)\Vert C\Vert^2+\Vert B\Vert\Vert C\Vert_F\xi(\varrho)),\\
L_2(\varrho)&=2\Vert B\Vert\Vert C\Vert\frac{\varrho^2}{\lambda_1(Q)\lambda_1(\Sigma)}(2\kappa_3(\varrho)+\kappa_4(\varrho)),
\end{align*}
where $\xi(\varrho)$ is defined as
\begin{equation*}
      \begin{split}
         \xi(\varrho)=\frac{\sqrt{n}\varrho}{\lambda_1(\Sigma)}\left(\frac{\varrho\Vert B\Vert}{\lambda_1(\Sigma)\lambda_1(Q)}+\sqrt{\left(\frac{\varrho\Vert B\Vert}{\lambda_1(\Sigma)\lambda_1(Q)}\right)^2+\lambda_n(R)}\right),
      \end{split}
\end{equation*}
and $\kappa_3(\varrho)$, $\kappa_4(\varrho)$ are defined in Theorem \ref{LipschizHessian}. For a parameter $\alpha\geq 0$, we define the convex penalty function
\begin{equation*}
\rho_\alpha(K)=L_1(\varrho)\left[\Vert K\Vert_F-\frac{\alpha}{L_2(\varrho)}\right]_+^2,
\end{equation*}
where $[t]_+=\max\{t,0\}$. Furthermore, by adding the penalty function $\rho_\alpha$ to the performance criterion $f(K)$, we obtain
$$f_k(K)=f(K)+\rho_\alpha(K-\hat{K}_k),$$
where $\hat{K}_k$ will be defined later in pseudocode. The following lemma is an illustration of $f_k$'s enticing properties.
\begin{lemma}[\cite{ref13}]
Let $K_0\in{\rm Cv}(\mathcal{S}_0,K_0)$ such that, for any $E\in\mathbb{R}^{m\times r}$, $\nabla^2 f(K_0)[E,E]\geq -\alpha\Vert E\Vert_F^2$ for some $\alpha\geq 0$. Then the function $f_k(K)=f(K)+\rho_\alpha(K-K_0)$ is $3\alpha$-semiconvex and $3L_1(\varrho)$-smooth in ${\rm Cv}(\mathcal{S}_0,K_0)$.
\end{lemma}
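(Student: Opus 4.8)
The plan is to reduce both assertions to pointwise spectral bounds on the (vectorized) Hessian $\nabla^2 f_k=\nabla^2 f+\nabla^2\rho_\alpha(\cdot-K_0)$ over the convex set ${\rm Cv}(\mathcal{S}_0,K_0)$, on which $f$ is $L_1(\varrho)$-smooth and has $L_2(\varrho)$-Lipschitz Hessian. By Lemma \ref{lemma7} the hypothesis $\nabla^2 f(K_0)[E,E]\geq-\alpha\Vert E\Vert_F^2$ is simply $\nabla^2 f({\rm vec}(K_0))\succeq-\alpha I$. First I would compute the curvature of the penalty. Writing $c=\alpha/L_2(\varrho)$ and $u=K/\Vert K\Vert_F$, for $\Vert K\Vert_F>c$ one has $\nabla\rho_\alpha(K)=2L_1(\Vert K\Vert_F-c)u$ and
$$\nabla^2\rho_\alpha(K)=2L_1\left[\left(1-\tfrac{c}{\Vert K\Vert_F}\right)(I-uu^{\rm T})+uu^{\rm T}\right],$$
with eigenvalues $2L_1(1-c/\Vert K\Vert_F)$ and $2L_1$, while $\nabla^2\rho_\alpha=0$ for $\Vert K\Vert_F<c$. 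Hence $\rho_\alpha$ is convex with $0\preceq\nabla^2\rho_\alpha\preceq 2L_1 I$, and moreover $\nabla^2\rho_\alpha\succeq L_1 I$ as soon as $\Vert K\Vert_F\geq 2c$.

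Smoothness is then immediate. Since $-L_1 I\preceq\nabla^2 f\preceq L_1 I$ on the convex set, adding the penalty gives $\nabla^2 f_k\preceq 3L_1 I$ and $\nabla^2 f_k\succeq-L_1 I\succeq-3L_1 I$, i.e. $\Vert\nabla^2 f_k\Vert\leq 3L_1(\varrho)$, which is the claimed $3L_1(\varrho)$-smoothness.

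For semiconvexity I would prove $\nabla^2 f_k(K)\succeq-3\alpha I$. Propagating the curvature bound from $K_0$ via Theorem \ref{LipschizHessian} along the segment $[K_0,K]$ (which lies in the convex set) gives $\nabla^2 f(K)\succeq-(\alpha+L_2 r)I$ with $r=\Vert K-K_0\Vert_F$. I would then split into three regimes keyed to $c=\alpha/L_2$: if $r\leq c$, the penalty is inactive but $\alpha+L_2 r\leq 2\alpha$, so $\nabla^2 f_k\succeq-2\alpha I$; if $c<r\leq 2c$, the penalty contributes $\succeq 0$ while $\alpha+L_2 r\leq 3\alpha$; and if $r>2c$, the Hessian-Lipschitz bound is too weak, so I instead use the crude smoothness floor $\nabla^2 f\succeq-L_1 I$ and cancel it against $\nabla^2\rho_\alpha\succeq L_1 I$, giving $\nabla^2 f_k\succeq 0$. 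In each regime $\nabla^2 f_k\succeq-3\alpha I$, which is exactly $3\alpha$-semiconvexity.

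The crux is the large-$r$ regime: one must recognize that the propagated Hessian-Lipschitz estimate degrades to uselessness once $r$ is large and deliberately retreat to the global smoothness floor $-L_1 I$, relying on the penalty radius $c=\alpha/L_2$ being tuned so that the penalty has already grown to strength $\geq L_1$ precisely where that floor is invoked. The accompanying technical obstacle is that $\rho_\alpha$ is merely $C^1$: its Hessian jumps across the sphere $\Vert K-K_0\Vert_F=\alpha/L_2$, so the pointwise Hessian statement holds only off this measure-zero set. I would close this gap by invoking the convexity of $\rho_\alpha$ — so that its first-order contribution to the quadratic semiconvexity inequality is nonnegative everywhere — and extending the bound across the sphere by continuity; equivalently, one phrases the whole argument through the first-order characterization $f_k(y)-f_k(x)-\langle\nabla f_k(x),y-x\rangle\geq-\tfrac{3\alpha}{2}\Vert y-x\Vert_F^2$ rather than the Hessian.
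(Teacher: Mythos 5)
Your proof is correct, but it takes a genuinely different route from the paper's: the paper's entire proof consists of checking that $f$ is $L_1(\varrho)$-smooth with $L_2(\varrho)$-Lipschitz Hessian on ${\rm Cv}(\mathcal{S}_0,K_0)$ and then invoking Lemma 5.1 of \cite{ref13} as a black box, whereas you reconstruct that lemma from first principles. Your computation of $\nabla^2\rho_\alpha$ (eigenvalues $2L_1(1-c/\Vert K\Vert_F)$ and $2L_1$ outside the ball of radius $c=\alpha/L_2$, zero inside), the propagation $\nabla^2 f(K)\succeq -(\alpha+L_2\Vert K-K_0\Vert_F)I$ via Theorem \ref{LipschizHessian} along the segment $[K_0,K]$ (legitimate because the set is convex and contained in $\mathcal{S}_0$), and the three-regime split at radii $c$ and $2c$ --- penalty inactive with $\alpha+L_2r\leq 2\alpha$; penalty PSD with $\alpha+L_2r\leq 3\alpha$; penalty $\succeq L_1 I$ cancelling the smoothness floor $-L_1 I$ --- is precisely the mechanism by which the constants $3\alpha$ and $3L_1$ arise in the cited reference, and each inequality checks out. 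You are also right to flag, and correctly repair, the one technical wrinkle of the pointwise-Hessian phrasing: $\rho_\alpha$ is only $C^1$ across the sphere $\Vert K-K_0\Vert_F=c$, so the Hessian bounds hold off a measure-zero set and must be converted into the integrated first-order characterization (a segment meets the sphere in at most two points, so integrating $g''$ a.e. along segments, or your convexity-of-$\rho_\alpha$ argument, closes the gap). What the paper's citation buys is brevity; what your argument buys is a self-contained verification that the paper's specific constants $L_1(\varrho)$, $L_2(\varrho)$ and penalty radius $\alpha/L_2(\varrho)$ are mutually compatible, with the caveat (which you respect) that every bound is available only on ${\rm Cv}(\mathcal{S}_0,K_0)$, not globally.
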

\begin{proof}
As mentioned above, $f(K)$ is $L_1(\varrho)$-smooth and has $L_2(\varrho)$-Lipschitz continuous Hessian. Hence, this lemma is a direct corollary of Lemma 5.1 of \cite{ref13}.
\end{proof}
\begin{breakablealgorithm}
    \caption{Acceleration Framework of OLQR (A-OLQR)}
    \begin{algorithmic}[1]
    \State \textbf{Given} : Performance criterion $f$, initial point $K_1$, accuracy $\epsilon$, smoothness parameter $L_1\geq L_1(\varrho)$, Lipschitz Hessian parameter $L_2\geq L_2(\varrho)$, semi-convexity parameter $\alpha$, optimal gap $\Delta_f$, probability accuracy $\delta$
    \State \textbf{Result} : $\hat{K}_k$
    \State Set $\Xi=\lceil 1+\Delta_f(12L_2^2/\alpha^3+\sqrt{10}L_2/(\alpha\epsilon))\rceil$ and $\delta''=\frac{\delta}{\Xi}$
    \For{$k=1,2,...$}
        \State $\hat{K}_k\leftarrow$NCD$(f,K_k,\delta'',L_2,\alpha,\Delta_f)$
        \If{$\Vert\nabla f(\hat{K}_k)\Vert<\epsilon$}
           \Return {$\hat{K}_k$}
        \EndIf
        \State Set $f_k(K)=f(K)+L_1([\Vert K-\hat{K}_k\Vert_F-\alpha/L_2]_+)^2$
        \State $K_{k+1}\leftarrow$Semiconvex-NAG$(f_k,\hat{K}_k,\epsilon/2,3L_1,3\alpha)$
    \EndFor
    \end{algorithmic}
    \label{alg4}
\end{breakablealgorithm}

Next, we will discuss some important assumptions and comments. The fundamental concept of the following discussion is to confine trajectory $K_k$ to the set $\mathcal{S}_0$, ensuring that the performance criterion $f(K)$ maintains the aforementioned smoothness properties so that Semiconvex-NAG and NCD are applicable.
Noticing that in Algorithm \ref{alg4}, every time the Semiconvex-NAG is executed, the Semiconvex-NAG's sub-algorithm named NAG requires no more than $j_*$ executions, where $j_*$ can be determined by the proof of Theorem \ref{thm7}. Let $K_{k,j}$ be the result of the $j$-th execution of NAG in the $k$-th execution of Semiconvex-NAG. Every calling of NAG is equivalent to optimizing the following $3\alpha$-strongly convex function
      \begin{equation*}
      \begin{aligned}
        f_{k,j}(K)=f(K)&+L_1([\Vert K-\hat{K}_k\Vert_F-\alpha/L_2]_+)^2+3\alpha\Vert K-K_{k,j-1}\Vert^2_F.
      \end{aligned}
      \end{equation*}

\begin{assumption}\label{core_assumption}
The following assumptions hold.
\begin{enumerate}
  \item Assuming that each time the sub-algorithm NCD is invoked, the resulting iteration sequence remains in set $\mathcal{S}_0$.
  \item For each $k,j$, it is assumed that the set ${\rm Cv}(\mathcal{S}_0,\hat{K}_k)$ contains the global minimizer of $f_{k,j}$ denoted as $K_{k,j}^*$.
  \item Let $\mathcal{S}_0^{k,j}$ shown in Figure \ref{fig2} be the sublevel set of $f_{k,j}$, i.e.,
        $$\mathcal{S}_0^{k,j}=\{K\colon f_{k,j}(K)\leq f_{k,j}(K_{k,j-1})\}.$$
        It is assumed that we have $\mathcal{S}_0^{k,j}\subseteq {\rm Cv}(\mathcal{S}_0,\hat{K}_k)$ for each $k,j$.
\end{enumerate}
\end{assumption}

\begin{remark}
The Assumption \ref{core_assumption} actually holds without loss of generality.
\begin{itemize}
  \item For Assumption \ref{core_assumption}.1, since for any $K\in\mathcal{S}_0$, $f(K)$ is
  $L_1(\varrho)$-smooth and has $L_2(\varrho)$-Lipschitz continuous Hessian on the set ${\rm Cv}(\mathcal{S}_0, K)$.
  In addition, according to (\ref{thm8result}), we have shown that the resulting iteration sequence of NCD is actually strictly decreasing sequence. Hence, this assumption actually holds true by choosing sufficiently big $L_2$.
  \item For Assumption \ref{core_assumption}.2, Since $f(K)$ is coercive and $\hat{K}_k$ is contained in the set ${\rm Cv}(\mathcal{S}_0,\hat{K}_k)$, this assumption actually holds true  by choosing sufficiently big $L_1$ and $L_2$.
  \item For Assumption \ref{core_assumption}.3, we can locate a point $\tilde{K}_{k,j-1}$ such that $\tilde{\mathcal{S}}_0^{k,j}=\{K\colon f_{k,j}(K)\leq f_{k,j}(\tilde{K}_{k,j-1})\}\subseteq{\rm Cv}(\mathcal{S}_0,\hat{K}_k)$ by employing vanilla gradient descent with the initial point $K_{k,j-1}$.
\end{itemize}

\end{remark}

In order to restrict the resulting iteration sequence to the set $\mathcal{S}_0^{k,j}$, the following restarting rule is introduced for each execution of NAG:
\begin{equation}\label{rr1}
    \begin{aligned}
        &\text{if }f_{k,j}(K_{p+1}^{k,j})\geq f_{k,j}(K_{k,j-1}),\text{ restart NAG}(f_{k,j},K_p^{k,j},\epsilon' ,3L_1,3\alpha),
    \end{aligned}
\end{equation}
where $K_p^{k,j}$ represents the outcome of executing the NAG algorithm with the function $f_{k,j}$ after $p$ steps and $\epsilon'$ is given by
$$\epsilon'=\frac{\epsilon}{2}\sqrt{\frac{\alpha}{50(L_1+2\alpha)}}.$$

\begin{assumption}\label{restart_assumption}
Assume that the restarting rule given in (\ref{rr1}) will be executed no more than $S$ times for each $k,j$.
\end{assumption}

\begin{remark}
For fixed $k,j$, $f_{k,j}$ is a strongly convex function, and NAG discrete-time algorithm (see Algorithm \ref{alg1})
\begin{equation}\label{NAG}
\begin{aligned}
	   y_{j+1}&=K_j-\frac{1}{L}\nabla f_{k,j}(K_j)\\ K_{j+1}&=\left(1+\frac{\sqrt{\kappa}-1}{\sqrt{\kappa}+1}\right)y_{j+1}-\frac{\sqrt{\kappa}-1}{\sqrt{\kappa}+1}y_j
\end{aligned}
\end{equation}
is equivalent to the discrete-time algorithm (\ref{d1}) with
$$d=\frac{1}{\sqrt{\kappa}+1}\sqrt{L},~\beta=\frac{\sqrt{\kappa}-1}{\sqrt{\kappa}+1}\sqrt{L},$$
where $L,\kappa$ are smoothness and condition number of $f_{k,j}$ respectively \cite{ref11}.
Moreover, notice
\begin{align*}
  -2d\Vert p\Vert^2&\geq -2d\Vert p\Vert^2-\langle p, \nabla f_{k,j}(q+\beta p)-\nabla f_{k,j}(q)\rangle\\
  &\geq-2d\Vert p\Vert^2-\frac{1}{3\alpha\beta}\Vert\nabla f_{k,j}(q+\beta p)-\nabla f_{k,j}(q) \Vert^2\\
  &\geq-\left(2d+\frac{\beta L^2}{3\alpha}\right)\Vert p\Vert^2,
\end{align*}
where the inequality holds by the basic properties of function class $\mathcal{F}^{1,1}_{3\alpha, L}$ (see Section 2.1.3 of \cite{ref5} for details). Hence, the assumption of Proposition 10 of \cite{ref10} holds, and $K_{k,j}^*$ is an asymptotically stable equilibrium of discrete-time dynamics (\ref{NAG}).
By the same reason with Remark \ref{remark3}, we can assume that the restarting rule will be executed no more than $S_{k,j}$ times for fixed $k,j$. In addition, by the proof of Theorem \ref{thm7} and Theorem \ref{thm9}, $k,j$ can be upper bounded by $k^*,j_*$ respectively. Thus, we can choose $S=\max_{k,j}S_{k,j}$.
\end{remark}

\begin{theorem}\label{thm9}
Under Assumption \ref{core_assumption}, \ref{restart_assumption}, let $K_1\in\mathcal{S}_0$ satisfy $f(K_1)-\inf_z f(z)\leq \Delta_f$, and let $\delta\in(0,1)$,
$0<\epsilon\leq\min\{\Delta_f^{\frac{2}{3}}L_2^{\frac{1}{3}},\frac{L_1^2}{L_2}\}$ and set $\alpha=L_2^{\frac{1}{2}}\epsilon^{\frac{1}{2}}$.
Then, with probability at least $1-\delta$, A-OLQR$(f,K_1,\epsilon,L_1,L_2,\alpha,\Delta_f,\delta)$ returns a point $K$ that satisfies
$$\Vert \nabla f(K)\Vert_F\leq\epsilon,\quad\lambda_1(\nabla^2 f({\rm vec}(K)))\geq -2L_2^{1/2}\epsilon^{1/2}$$
in time
\begin{equation}\label{f1}
  \mathcal{O}\left({ T_{\rm grad}}\frac{\Delta_f L_1^{1/2}L_2^{1/4}}{\epsilon^{7/4}}\log\left(\frac{mr\sqrt{L_1\Delta_f}}{\delta\epsilon}\right)\right).
\end{equation}
\end{theorem}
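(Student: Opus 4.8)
The plan is to read Theorem \ref{thm9} as a bookkeeping result that glues together the two subroutine guarantees of Theorem \ref{thm8} (for NCD) and Theorem \ref{thm7} (for Semiconvex-NAG), while tracking the monotone decrease of $f$ along the whole run of Algorithm \ref{alg4}. First I would settle the correctness of the returned point. The algorithm only ever returns $\hat{K}_k={\rm NCD}(f,K_k,\delta'',L_2,\alpha,\Delta_f)$, and only when the test $\Vert\nabla f(\hat{K}_k)\Vert_F<\epsilon$ passes; this yields the first-order guarantee immediately. The second-order guarantee is exactly (\ref{NCD2}): with probability at least $1-\delta''$ the NCD call produces $\lambda_1(\nabla^2 f({\rm vec}(\hat{K}_k)))\geq-\alpha=-L_2^{1/2}\epsilon^{1/2}\geq-2L_2^{1/2}\epsilon^{1/2}$ after substituting $\alpha=L_2^{1/2}\epsilon^{1/2}$, the factor $2$ being harmless slack.

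Next I would bound the number of outer iterations $k^*$. The engine is that $f$ never increases along the run: each NCD inner step strictly decreases $f$ by at least $\alpha^3/(12L_2^2)$ via (\ref{thm8result}), while each Semiconvex-NAG call decreases $f$ by at least $f_k(\hat{K}_k)-f_k(K_{k+1})\geq 0$, since $f_k=f+\rho_\alpha(\cdot-\hat{K}_k)$ with $\rho_\alpha(0)=0$ and $\rho_\alpha\geq0$ forces $f(\hat{K}_k)-f(K_{k+1})\geq f_k(\hat{K}_k)-f_k(K_{k+1})$; the latter is nonnegative by (\ref{e1}). Summing all decreases against the budget $\Delta_f$ gives two counting bounds: the total number of NCD inner steps is at most $12L_2^2\Delta_f/\alpha^3$, and — using that in a non-terminating iteration the Semiconvex-NAG move must be at least $\alpha/L_2$ (otherwise the penalty gradient vanishes at $K_{k+1}$, forcing $\nabla f=\nabla f_k$ and a successor that the next NCD-plus-test certifies) — the per-call decrease in the linear regime of (\ref{e1}) is at least a constant multiple of $\epsilon\alpha/L_2$, so the number of Semiconvex-NAG calls is $\mathcal{O}(L_2\Delta_f/(\alpha\epsilon))$. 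These are precisely the two terms defining $\Xi$, so $\Xi$ simultaneously upper-bounds the number of outer iterations and the total count of NCD inner steps.

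With the iteration count in hand I would compute the total time by summing per-call costs rather than bounding each by the worst case. Summing the NCD cost (\ref{NCD3}) over at most $12L_2^2\Delta_f/\alpha^3$ steps and substituting $\alpha=L_2^{1/2}\epsilon^{1/2}$ gives $\mathcal{O}(T_{\rm grad}\,\Delta_f L_1^{1/2}L_2^{1/4}\epsilon^{-7/4}\log(\cdots))$, since $\sqrt{L_1/\alpha}=L_1^{1/2}L_2^{-1/4}\epsilon^{-1/4}$ combines with $L_2^2/\alpha^3=L_2^{1/2}\epsilon^{-3/2}$ to produce that exponent. For Semiconvex-NAG I would exploit the split in (\ref{time1}): a fixed part $\sqrt{L_1/\gamma}$ and a data-dependent part proportional to the realized decrease $f_k(\hat{K}_k)-f_k(K_{k+1})$. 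The fixed parts sum to at most $\Xi\sqrt{L_1/(3\alpha)}$, while the data-dependent parts telescope against $\Delta_f$ because $\sum_k(f_k(\hat{K}_k)-f_k(K_{k+1}))\leq\Delta_f$; with $\gamma=3\alpha$ both contributions again collapse to $\mathcal{O}(T_{\rm grad}\,\Delta_f L_1^{1/2}L_2^{1/4}\epsilon^{-7/4}\log(\cdots))$, matching (\ref{f1}). The logarithmic factors in (\ref{NCD3}) and (\ref{time1}), together with $\delta''=\delta/\Xi$, reduce to $\log(mr\sqrt{L_1\Delta_f}/(\delta\epsilon))$ after absorbing the polynomial dependencies inside the logarithm and replacing the ambient dimension by $mr$ (recall $K\in\mathbb{R}^{m\times r}$ here). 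A union bound over the at most $\Xi$ NCD calls, each succeeding with probability $1-\delta/\Xi$, then delivers overall success probability at least $1-\delta$.

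The hard part will be the iteration-count argument of the second paragraph, not the arithmetic of the third. Everything downstream relies on $f$ being monotonically non-increasing and on every iterate staying inside the sublevel-set component $\mathcal{S}_0$ on which the $L_1(\varrho)$-smoothness and $L_2(\varrho)$-Lipschitz-Hessian bounds, and hence the hypotheses of both subroutines, are valid; this is exactly what Assumptions \ref{core_assumption} and \ref{restart_assumption} together with the restarting rule (\ref{rr1}) are engineered to guarantee, so I would invoke them carefully to justify that each subroutine is run where its preconditions hold. The genuinely subtle step is the case analysis certifying that a ``short'' Semiconvex-NAG move cannot persist: when $\Vert K_{k+1}-\hat{K}_k\Vert_F<\alpha/L_2$ the point $K_{k+1}$ is already $\epsilon/2$-stationary for $f$, so the subsequent NCD either returns it and triggers termination through the gradient test, or extracts genuine negative curvature and thus function-value progress, which is what rules out an unbounded number of unproductive outer iterations and closes the accounting.
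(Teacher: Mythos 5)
Your proposal is correct and follows essentially the same route as the paper: first-order guarantee from the termination test, second-order from the NCD guarantee (\ref{NCD2}) with $\alpha=L_2^{1/2}\epsilon^{1/2}$, outer-iteration count of order $\Xi$ from charging per-iteration decreases against the budget $\Delta_f$, and total time from summing the per-call costs of Theorem \ref{thm8} and Theorem \ref{thm7} via the telescoping observation $f_k(\hat{K}_k)-f_k(K_{k+1})\le f(\hat{K}_k)-f(K_{k+1})\le f(K_k)-f(K_{k+1})$. The only real difference is one of citation rather than substance: where you sketch the progress dichotomy inline (a short Semiconvex-NAG move forces near-termination at the next NCD-plus-test, a long move forces a decrease of order $\min\{\alpha^3/L_2^2,\,\epsilon\alpha/L_2\}$), the paper simply invokes Lemma 5.2 of \cite{ref13} for the bound on $k^*$ and the correctness guarantees, and handles the logarithm simplification explicitly using the hypotheses $\epsilon\le\Delta_f^{2/3}L_2^{1/3}$ and $\epsilon\le L_1^2/L_2$.
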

\begin{proof}
  Lemma 5.2 of \cite{ref13} has shown that with probability $1-\delta$, the method A-OLQR$(K_1,f,\epsilon,L_1,L_2,\alpha,$ $\Delta_f,\delta)$ terminates after $k^*$ iterations with
  $\Vert \nabla f(\hat{K}_{k^*})\Vert_F\leq\epsilon$ and $\lambda_1(\nabla^2 f({\rm vec}(\hat{K}_{k^*})))\geq -2\alpha$, where
  $$k^*\leq 2+\Delta_f\left(\frac{12L_2^2}{\alpha^3}+\frac{\sqrt{10}L_2}{\alpha\epsilon}\right)\leq18\Delta_fL_2^{\frac{1}{2}}\epsilon^{-\frac{3}{2}}.$$
  The last inequality holds from our assumption $\epsilon\leq\Delta_fL_2^{\frac{1}{2}}$.

  According to Theorem \ref{thm8}, the cost of the $k$-th iteration of NCD is bounded by
  \begin{equation*}
    \begin{aligned}
      \mathcal{O}&\left({ T_{\rm grad}}\left(1+\frac{12L_2^2(f(K_k)-f(\hat{K}_k))}{\alpha^3}\right)\sqrt{\frac{L_1}{\alpha}}\log\left(\frac{mr}{\delta''}\left(1+\frac{12L_2^2\Delta_f}{\alpha^3}\right)\right)\right).
    \end{aligned}
  \end{equation*}
  By the definition of $\delta''$, it is obvious that
  \begin{align*}
    \frac{mr}{\delta''}\left(1+\frac{12L_2^2\Delta_f}{\alpha^3}\right)\leq\frac{234mr}{\delta}\Delta_f^2L_2\epsilon^{-3}\leq\frac{234mr}{\delta}\Delta_f^2L_1^2\epsilon^{-4},
  \end{align*}
  with the last inequality following from $\epsilon\leq\frac{L_1^2}{L_2}$. Based on (\ref{NCD1}), we have $f(K_{k+1})\leq f(\hat{K}_k)$ and deduce that the cost of the $k$-th iteration of NCD is also bounded by
  \begin{equation}\label{jj}
    \begin{aligned}
      \mathcal{O}&\left({ T_{\rm grad}}\left(\frac{L_1^{\frac{1}{2}}}{L_2^{\frac{1}{4}}\epsilon^{\frac{1}{4}}}+\frac{L_1^{\frac{1}{2}}L_2^\frac{1}{4}}{\epsilon^{\frac{7}{4}}}(f(K_k)-f(K_{k+1}))\right)\log\left(\frac{mr}{\delta}\frac{\sqrt{L_1\Delta_f}}{\epsilon}\right)\right).
    \end{aligned}
  \end{equation}
  By suming this bound for $k=1,\dots,k^*$, we obtain the following bound of the total cost of NCD:
  \begin{equation*}
    \begin{aligned}
      \mathcal{O}\left({ T_{\rm grad}}\left(\frac{L_1^{\frac{1}{2}}}{L_2^{\frac{1}{4}}\epsilon^{\frac{1}{4}}}k^*+\frac{\Delta_fL_1^{\frac{1}{2}}L_2^\frac{1}{4}}{\epsilon^{\frac{7}{4}}}\right)\log\left(\frac{mr}{\delta}\frac{\sqrt{L_1\Delta_f}}{\epsilon}\right)\right).
    \end{aligned}
  \end{equation*}
  Combining with the estimation of $k^*$, we obtain the cost bound (\ref{f1}).

  Next, we will estimate the cost of invoking Semiconvex-NAG. In Theorem \ref{thm7}, we have shown that the cost of the $k$-th iteration of Semiconvex-NAG is bound by
  \begin{equation*}
    \begin{aligned}
      \mathcal{O}&\left({T_{\rm grad}}\left(\sqrt{\frac{3L_1}{\gamma}}+\frac{\sqrt{3\gamma L_1}}{\epsilon^2}(f_k(\hat{K}_k)-f_k(K_{k+1}))\right)\log\left(\frac{3^{S+3}L_1^{S+3}\Delta_f}{\gamma^{S+2}\epsilon^2}\right)\right).
      \end{aligned}
  \end{equation*}
  Substituting $\gamma=3\alpha=3\sqrt{L_2\epsilon}$, we can simplify the expression inside the logarithm by noticing
  \begin{align*}
    &\quad\log\left(\frac{3^{S+3}L_1^{S+3}\Delta_f}{\gamma^{S+2}\epsilon^2}\right)\\
    &=S\log\left(\frac{L_1^S}{L_2^{\frac{1}{2}}\epsilon^{\frac{1}{2}}}\right)+\log\left(\frac{3L_1^3\Delta_f}{L_2\epsilon^3}\right)\\
    &\leq S\log\left(L_1\Delta_f\epsilon^{-2}\right)+\log\left(3L_1^3\Delta_f^3\epsilon^{-6}\right)\\
    &=\mathcal{O}\left(\log\left(\frac{\sqrt{L_1\Delta_f}}{\epsilon}\right)\right),
  \end{align*}
  where the above inequality follows from $\epsilon\leq\Delta_f^{\frac{2}{3}}L_2^{\frac{1}{3}}$. Observing the fact $f_k(\hat{K}_k)-f_k(K_{k+1})\leq f(\hat{K}_k)-f(K_{k+1})\leq f(K_k)-f(K_{k+1})$,
  we can show that the cost of the $k$-th iteration of Semiconvex-NAG can also be bounded by (\ref{jj}). Hence for the same reason, we conclude that the total cost of Semiconvex-NAG can also be bounded by (\ref{f1}).
\end{proof}
\begin{remark}
  For a $L$-smooth and Lipschitz Hessian function $f(x)$, we call $x$ is a $(\epsilon,\delta)$-SSP (second-order stationary point) of $f(x)$ if $\Vert\nabla f(x) \Vert\leq\epsilon$ and $\nabla^2 f(x)\succeq -\delta I$. In general,
  such $(\epsilon,\delta)$-SSP provides a better approximate of local minima. Different from \cite{ref2}, our acceleration framework of OLQR problem provides the SSP guarantee.
\end{remark}

\section{Numerical Examples}

In this section, a few numerical examples are represented that  illustrate the theoretical results of this paper.

\begin{example}
Solve the SLQR problem (\ref{eq5}) with parameters
\begin{equation*}
  A_n=\begin{pmatrix}
        0 & 1 & 0 & \cdots & 0 & 0 \\
        0 & 0 & 1 & \cdots & 0 & 0 \\
        0 & 0 & 0 & \cdots & 0 & 0 \\
        \vdots & \vdots & \vdots& \ddots & \vdots & \vdots \\
        0 & 0 & 0 & \cdots & 0 & 1 \\
        0 & 0 & 0 & \cdots & 0 & 0
      \end{pmatrix}_{n\times n},
  B_n=\begin{pmatrix}
        0 \\
        0 \\
        0 \\
        \vdots \\
        0 \\
        1
      \end{pmatrix}_{n\times 1},
\end{equation*}
$R=1$ and $Q=\Sigma=I_n$.
\end{example}

\textit{Solution}. Denoting the feedback gain as $K=[k_1,k_2,\dots,k_n]\in\mathbb{R}^{1\times n}$, it is clear that
\begin{equation*}
	\det(\lambda I-(A_n-B_nK))=\lambda^n+k_n\lambda^{n-1}+\dots+k_2\lambda+k_1.
\end{equation*}
Then,
\begin{equation*}
	\mathcal{S}=\{K\in\mathbb{R}^{1\times n}\colon\Delta_k(K)>0, k=1,\dots,n\},
\end{equation*}
where $\Delta_k(K)$ denotes the $k$-order leading principal minors of matrix
\begin{equation*}
	H(K)=\begin{pmatrix}
		k_n & k_{n-2} & k_{n-4} & \cdots & 0 & 0 \\
		1 & k_{n-1} & k_{n-3} & \cdots & 0 & 0 \\
		0 & k_{n} & k_{n-2} & \cdots & 0 & 0 \\
		\vdots & \vdots & \vdots & \ddots & \vdots & \vdots \\
		0 & 0 & 0 & \cdots & k_2 & 0 \\
		0 & 0 & 0 & \cdots & k_3 & k_1
	\end{pmatrix},
\end{equation*}
called Hurwitz matrix. Obviously, $\mathcal{S}$ is not convex with non-smooth boundary, so the standard Nesterov-type method of \cite{ref5} cannot be applied directly.
For $n=3$, we have
\begin{equation*}
	\mathcal{S}=\{K\in\mathbb{R}^{1\times 3}\colon k_1>0,k_2k_3>k_1\}.
\end{equation*}
Then, we choose the initial stabilizing gain as $K_0=[5,100,15]$, and perform the gradient descent (GD) of \cite{ref2} and Nesterov-type method (\ref{d1}) of this paper, respectively. The numerical result is shown in Figure \ref{fig3}.  \hfill $\square$
\begin{figure}[htbp]
	\centerline{\includegraphics[width=0.56\textwidth]{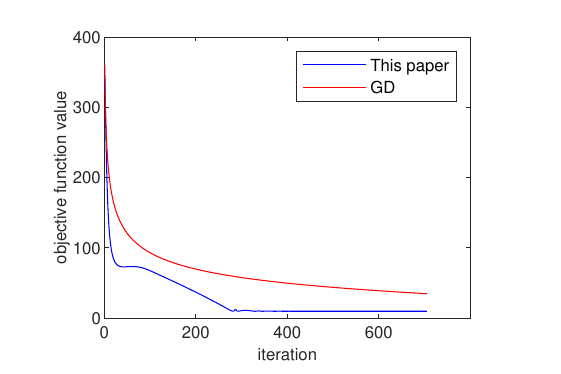}}
	\caption{The result for $n=3$ with $K_0=[5,100,15]$}
	\label{fig3}
\end{figure}

We can observe that the Nesterov-type method (\ref{d1}) converges faster than GD. To emphasize, our method is precisely the Heavy-ball scheme, which results in an accelerated convergence rate for an adequately small step-size.
Unlike Nesterov's original scheme introduced in \cite{ref5}, a gradient evaluation at $K_k+\beta p_k$ is not required, and typically only minor steps are permitted.
Regretfully, due to the smaller step-size selection compared to NAG or GD for ensuring the convergence of Heavy-ball method, our method does not exhibit an accelerated phenomenon or even appears worse performance in small-scale problems and in the case where the initial iteration point is very closed to the optimal feedback gain; see Figure \ref{fig4}.

\begin{example}\label{exp2}
Consider the SLQR problem given in Example 1 with $n=3$ and $K_0=[1,2,2]$, which is very near to the optimal feedback gain. The numerical result is shown in Figure \ref{fig4}.
\begin{figure}[htbp]
\centerline{\includegraphics[width=0.56\textwidth]{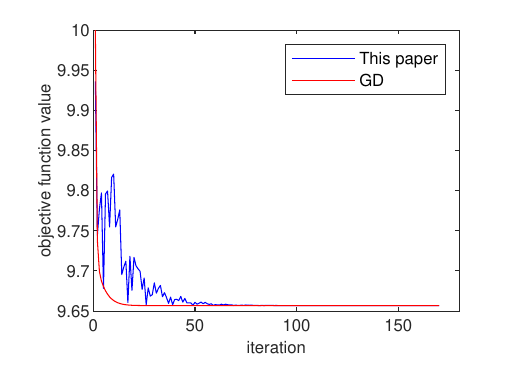}}
\caption{The result for $n=3$ with $K_0=[1,2,2]$}
\label{fig4}
\end{figure}
\end{example}

Intriguingly, when contemplating a problem of medium scale, the performance of our algorithm improves significantly after transient iterations; see Figure \ref{fig5}.

\begin{example}
Consider the SLQR problem given in Example 1 with $n=10$ and $$K_0=[1,10,45,120,210,252,210,120,45,10].$$
The numerical result is shown in Figure \ref{fig5}.
\begin{figure}[htbp]
\centerline{\includegraphics[width=0.56\textwidth]{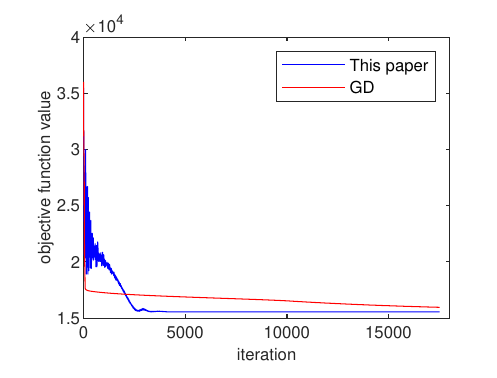}}
\caption{The result for $n=10$}
\label{fig5}
\end{figure}
\end{example}

\begin{example}

Consider a more general medium-size SLQR problem with parameters
\begin{align*}
  C&=I,\quad A=\frac{1}{n}{\rm rand}(n,n)-I ,\\
  B&={\rm ones}(n,m)+\frac{1}{2}{\rm rand}(n,m),\\
  Q&=Q_1Q_1^{\rm T},\quad Q_1={\rm rand}(n,n),\\
  R&=R_1R_1^{\rm T},\quad R_1={\rm rand}(m,m),
\end{align*}
where $n=10,m=3,{\rm ones}(n,m)$ is a $n\times m$ matrix with all entries equal to 0 and ${\rm rand}(n,m)$ is a $n\times m$ matrix with every entry generated from the uniform distribution on $[0,1]$. Choose the initial stabilizing gain as $K_0=0$, and we only show the result of two algorithms iterated after 30 times since $f(K_0)$ is too large in Figure \ref{fig6}.
\begin{figure}[htbp]
\centerline{\includegraphics[width=0.56\textwidth]{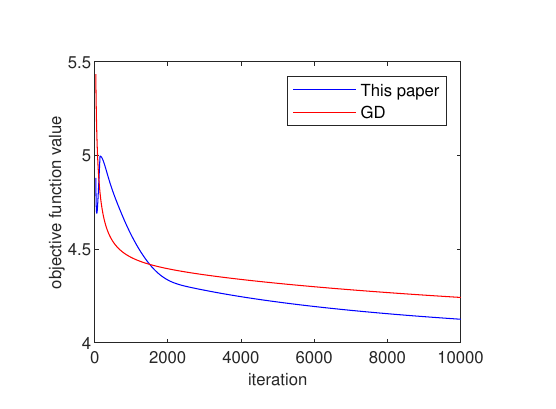}}
\caption{The result for a more general case}
\label{fig6}
\end{figure}

\end{example}

To sum up, for medium-scale problem or for the case when the initial point is far away from the optimal feedback gain, our algorithm (\ref{d1}) exhibits accelerated performance markedly. However, if considering a small-scale problem with a ``good'' initial point (initial point is near to the optimal feedback gain), our algorithm may perform worse (as shown in Example \ref{exp2}). This phenomenon stems from the momentum-based accelerated optimization algorithm itself \cite{z7}, rather than the SLQR problem introduced in this paper. Consequently, even if we consider a generally smooth and strongly convex function with ``good'' initial point, the standard NAG algorithm may perform less well than GD; this is because the iteration sequence of GD is decreasing around the global minima!

\section{Conclusion}

The results of this paper might be extended in several directions. First, a fixed-time stable gradient flow is introduced in  \cite{ref14}, which can converge in fixed time to the global minimizer of an objective function satisfying PL condition.
Can such a method be utilized to obtain a fixed-time or finite-time convergence algorithm for the SLQR problem in both continuous time and discrete time?
Second, more research effort should be devoted to the OLQR problem. For instance, how to find a strictly local minima? Can we design an accelerated framework which is one-procedure?
Third, can our results be extended to the Markovian jump linear system or indefinite LQR problem?

\section{Appendix}
\subsection{Proof of Theorem \ref{LipschizHessian}}\label{proof_of_hessian}
\begin{proof}
Since $\nabla^2f({\rm vec}(K))$ is symmetric, we have
\begin{align*}
\Vert\nabla^2f({\rm vec}(K))\Vert
&=\lambda_{\rm max}(\nabla^2f({\rm vec}(K)))\\
&=\sup_{\Vert \tilde{E}\Vert=1,\tilde{E}\in\mathbb{R}^{mn}}\left\vert\tilde{E}^{\rm T}\nabla^2f({\rm vec}(K))\tilde{E}\right\vert\\
&=\sup_{\Vert E\Vert_F=1,E\in\mathbb{R}^{m\times n}}\left\vert\nabla^2f(K)[E,E]\right\vert.
\end{align*}
The same procedure may be easily adapted to obtain
\begin{equation*}
	\Vert\nabla^2f({\rm vec}(K))-\nabla^2f({\rm vec}(K'))\Vert=\sup_{\Vert E \Vert_F=1}\left\vert\nabla^2 f(K)[E,E]-\nabla^2f(K')[E,E]\right\vert.
\end{equation*}
Denote $\Delta K=K'-K,X''_K[E]=\frac{{\rm d}^2}{{\rm d}\eta^2}\bigg|_{\eta=0}X_{K+\eta E}$ and define
$$g(\delta)=\nabla^2f(\bar{K})[E,E],$$
and one immediately has
\begin{align*}
	g'(\delta)&=\frac{\partial}{\partial \delta}{\rm Tr}\left(\frac{{\rm d}^2}{{\rm d}\eta^2}\bigg|_{\eta=0}X_{K+\delta(K'-K)+\eta E}\Sigma\right)\\
	&={\rm Tr}\left(\frac{\partial^3}{\partial \eta^2\partial \delta}\bigg|_{\eta=0}X_{K+\delta(K'-K)+\eta E}\Sigma\right).
\end{align*}
By the fundamental theorem of calculus, it follows that
\begin{align*}
&\Vert\nabla^2f({\rm vec}(K))-\nabla^2f({\rm vec}(K'))\Vert\\
	&=\sup_{\Vert E \Vert_F=1}\vert g(0)-g(1)\vert=\sup_{\Vert E \Vert_F=1}\left|\int_0^1g'(\delta){\rm d}\delta\right|\\
	&\leq\int_0^1\sup_{\Vert E \Vert_F=1}\vert g'(\delta)\vert{\rm d}\delta.
\end{align*}
Let us denote $X'_K[E]=\frac{{\rm d}}{{\rm d}\eta}\bigg|_{\eta=0}X_{K+\eta E}$, and based on the following Lyapunov equation given by (\ref{eq7})
\begin{equation}\label{L1}
A_{\bar{K}}^{\rm T}X_{\bar{K}}+X_{\bar{K}}A_{\bar{K}}+C^{\rm T}\bar{K}^{\rm T}R\bar{K}C+Q=0,
\end{equation}
we can get the Lyapunov equation
\begin{equation}\label{L2}
\begin{split}
A_{\bar{K}}^{\rm T}X'_{\bar{K}}[E]+X'_{\bar{K}}[E]A_{\bar{K}}-(BEC)^{\rm T}X_{\bar{K}}-X_{\bar{K}}BEC+C^{\rm T}E^{\rm T}R\bar{K}C+C^{\rm T}\bar{K}^{\rm T}REC=0
\end{split}
\end{equation}
by finding the directional derivative in direction $E$ at $\bar{K}$. By the same token, we can get the following Lyapunov equation
\begin{equation}\label{L3}
	\begin{split}
		A_{\bar{K}}^{\rm T}X''_{\bar{K}}[E]+X''_{\bar{K}}[E]A_{\bar{K}}-2(BEC)^{\rm T}X'_{\bar{K}}[E]
		-2X'_{\bar{K}}[E](BEC)+2C^{\rm T}E^{\rm T}REC=0.
	\end{split}
\end{equation}
Then,
\begin{equation*}
	\begin{split}
		A_{\bar{K}}^{\rm T}\frac{\partial^3}{\partial \eta^2\partial \delta}\bigg|_{\eta=0}X_{\bar{K}+\eta E}+\frac{\partial^3}{\partial \eta^2\partial \delta}\bigg|_{\eta=0}X_{\bar{K}+\eta E}A_{\bar{K}}
-2(BEC)^{\rm T}\frac{\partial X'_{\bar{K}}[E]}{\partial \delta}\\
-2\frac{\partial X'_{\bar{K}}[E]}{\partial \delta}(BEC)
-(B\Delta KC)^{\rm T}X''_{\bar{K}}[E]-X''_{\bar{K}}[E]B\Delta KC=0.
	\end{split}
\end{equation*}
Hence, we have
\begin{equation*}
	\frac{\partial^3}{\partial \eta^2\partial \delta}\bigg|_{\eta=0}X_{\bar{K}+\eta E}=\int_0^\infty e^{A_{\bar{K}}\tau}\tilde{S}e^{A_{\bar{K}}^{\rm T}\tau}{\rm d}\tau
\end{equation*}
by denoting
\begin{equation*}
  \tilde{S}=-2(BEC)^{\rm T}\frac{\partial X'_{\bar{K}}[E]}{\partial \delta}
		-2\frac{\partial X'_{\bar{K}}[E]}{\partial \delta}(BEC)-(B\Delta KC)^{\rm T}X''_{\bar{K}}[E]-X''_{\bar{K}}[E]B\Delta KC.
\end{equation*}
Based on the abovementioned fact, we can get that
\begin{align*}
	g'(\delta)&={\rm Tr}\left(\frac{\partial^3}{\partial \eta^2\partial \delta}\bigg|_{\eta=0}X_{\bar{K}+\eta E}\Sigma\right)\\
	&={\rm Tr}\left(\int_0^\infty e^{A_{\bar{K}}\tau}\tilde{S}e^{A_{\bar{K}}^{\rm T}\tau}{\rm d}\tau\Sigma\right)\\
	&={\rm Tr}\left(\tilde{S}\int_0^\infty e^{A_{\bar{K}}^{\rm T}\tau}\Sigma e^{A_{\bar{K}}\tau}{\rm d}\tau\right)\\
	&=-4{\rm Tr}\left((BEC)^{\rm T}\frac{\partial X'_{\bar{K}}[E]}{\partial \delta}Y_{\bar{K}}\right)-2{\rm Tr}\left((B\Delta KC)^{\rm T}X''_{\bar{K}}[E]Y_{\bar{K}}\right),
\end{align*}
and we then need to estimate the upper bound of items ${\rm Tr}(Y_{\bar{K}}),\frac{\partial X'_{\bar{K}}[E]}{\partial \delta}$ and $X''_{\bar{K}}[E]$ respectively.

First, from Lemma A.1 of \cite{ref2}, it is clear that
\begin{align*}
	f(\bar{K})={\rm Tr}((Q+C^{\rm T}\bar{K}^{\rm T}R\bar{K}C)Y_{\bar{K}})\geq \lambda_1(Q){\rm Tr}(Y_{\bar{K}}),
\end{align*}
so we have
\begin{equation*}
	{\rm Tr}(Y_{\bar{K}})\leq \frac{f(\bar{K})}{\lambda_1(Q)}\leq\frac{\alpha}{\lambda_1(Q)}.
\end{equation*}

Second, we estimate the upper bound of $\frac{\partial X_{\bar{K}}}{\partial \delta}$.
Based on (\ref{L1}), we can obtain the following Lyapunov equation
\begin{equation*}
	\begin{split}
		A_{\bar{K}}^{\rm T}\frac{\partial X_{\bar{K}}}{\partial \delta}+\frac{\partial X_{\bar{K}}}{\partial \delta}A_{\bar{K}}-(B\Delta KC)^{\rm T}X_{\bar{K}}
		-X_{\bar{K}}(B\Delta KC)+C^{\rm T}\Delta K^{\rm T}R\bar{K}C+C^{\rm T}\bar{K}^{\rm T}R\Delta KC=0.
	\end{split}
\end{equation*}
Let us denote
\begin{align*}
S_1=-(B\Delta KC)^{\rm T}X_{\bar{K}}-X_{\bar{K}}(B\Delta KC)+C^{\rm T}\Delta K^{\rm T}R\bar{K}C+C^{\rm T}\bar{K}^{\rm T}R\Delta KC,
\end{align*}
and according to the Lemma  C.3 of \cite{ref2}, we have for any $K\in\mathcal{S}_{\alpha}$
\begin{equation*}
	\Vert K \Vert_F\leq \frac{2\Vert B\Vert \alpha}{\lambda_1(\Sigma)\lambda_1(R)}+\frac{\Vert A\Vert}{\Vert B\Vert}\doteq\zeta.
\end{equation*}
Combining with the fact that
\begin{equation*}
	\Vert X_{\bar{K}}\Vert \leq\frac{f(\bar{K})}{\lambda_1(\Sigma)}\leq\frac{\alpha}{\lambda_1(\Sigma)},
\end{equation*}
we can demonstrate that
\begin{align*}
	S_1&\preceq\left(2\Vert B\Vert\Vert \Delta K\Vert\Vert C\Vert\frac{\alpha}{\lambda_1(\Sigma)}+2\Vert C\Vert^2\Vert R\Vert \zeta \Vert \Delta K\Vert\right)I\\	
	&\preceq \frac{2}{\lambda_1(Q)}\left(\Vert B\Vert\Vert C\Vert\frac{\alpha}{\lambda_1(\Sigma)}+\Vert C\Vert^2\Vert R\Vert \zeta\right)\Vert \Delta K\Vert Q\\
	&\doteq \kappa_1\Vert\Delta K\Vert Q,
\end{align*}
where $\kappa_1$ is given by
$$\kappa_1=\frac{2}{\lambda_1(Q)}\left(\Vert B\Vert\Vert C\Vert\frac{\alpha}{\lambda_1(\Sigma)}+\Vert C\Vert^2\Vert R\Vert \zeta\right).$$
Hence, we can show that
\begin{align*}
	\frac{\partial X_{\bar{K}}}{\partial \delta}&=\int_0^\infty e^{A_{\bar{K}}\tau}S_1e^{A_{\bar{K}}^{\rm T}\tau}{\rm d}\tau\\
	&\preceq \kappa_1 \Vert\Delta K\Vert\int_0^\infty e^{A_{\bar{K}}\tau}Qe^{A_{\bar{K}}^{\rm T}\tau}{\rm d}\tau	\\
	&\preceq \kappa_1 \Vert\Delta K\Vert\int_0^\infty e^{A_{\bar{K}}\tau}(Q+C^{\rm T}\bar{K}^{\rm T}R\bar{K}C)e^{A_{\bar{K}}^{\rm T}\tau}{\rm d}\tau	\\
	&=\kappa_1 \Vert\Delta K\Vert X_{\bar{K}},
\end{align*}
i.e.,
\begin{equation}\label{C1}
  \frac{\partial X_{\bar{K}}}{\partial \delta}\preceq\kappa_1 \Vert\Delta K\Vert X_{\bar{K}}.
\end{equation}

Next, we will give the upper bound of $X'_{\bar{K}}[E]$. We denote
\begin{align*}
  S_2=-(BEC)^{\rm T}X_{\bar{K}}-X_{\bar{K}}BEC+C^{\rm T}E^{\rm T}R\bar{K}C+C^{\rm T}\bar{K}^{\rm T}REC,
\end{align*}
and based on (\ref{L2}) we can also show that
\begin{align*}
	S_2  \preceq \left(2\Vert B\Vert\Vert C\Vert\frac{\alpha}{\lambda_1(\Sigma)}+2\Vert C\Vert^2\Vert R\Vert\zeta\right)\frac{Q}{\lambda_1(Q)}=\kappa_2 Q,
\end{align*}
where $\kappa_2$ is given by
$$\kappa_2=\frac{2}{\lambda_1(Q)}\left(\Vert B\Vert\Vert C\Vert\frac{\alpha}{\lambda_1(\Sigma)}+\Vert C\Vert^2\Vert R\Vert\zeta\right).$$
Hence by the same token, we have
\begin{equation}\label{C2}
	X'_{\bar{K}}[E]\preceq \kappa_2X_{\bar{K}}.
\end{equation}

Then, we will find the upper bound of $\frac{\partial X'_{\bar{K}}[E]}{\partial \delta}$. According to (\ref{L2}), we can get that
\begin{equation*}
	\begin{split}
		&A_{\bar{K}}^{\rm T}\frac{\partial X'_{\bar{K}}[E]}{\partial \delta}+\frac{\partial X'_{\bar{K}}[E]}{\partial \delta}A_{\bar{K}}-(BEC)^{\rm T}\frac{\partial X_{\bar{K}}}{\partial \delta}
-\frac{\partial X_{\bar{K}}}{\partial \delta}(BEC)\\
&-(B\Delta KC)^{\rm T}X'_{\bar{K}}[E]-X'_{\bar{K}}[E](B\Delta KC)
+C^{\rm T}E^{\rm T}R\Delta KC+C^{\rm T}\Delta K^{\rm T}EC=0.
	\end{split}
\end{equation*}
By denoting
\begin{equation*}
	\begin{split}
		S_3=-(BEC)^{\rm T}\frac{\partial X_{\bar{K}}}{\partial \delta}-\frac{\partial X_{\bar{K}}}{\partial \delta}(BEC)-(B\Delta KC)^{\rm T}X'_{\bar{K}}[E]\\
		-X'_{\bar{K}}[E](B\Delta KC)+C^{\rm T}E^{\rm T}R\Delta KC+C^{\rm T}\Delta K^{\rm T}EC,
	\end{split}
\end{equation*}
we can prove that
\begin{align*}
	S_3&\preceq 2\left(\Vert B\Vert\Vert C\Vert\left\Vert\frac{\partial X_{\bar{K}}}{\partial \delta}\right\Vert+\Vert B\Vert\Vert C\Vert\Vert X'_{\bar{K}}[E]\Vert\Vert \Delta K\Vert+\Vert C\Vert^2\Vert R\Vert\Vert\Delta K\Vert\right)I\\
	&\preceq \frac{2}{\lambda_1(Q)}\left(\kappa_1\Vert B\Vert\Vert C\Vert\frac{\alpha}{\lambda_1(\Sigma)}+\kappa_2\Vert B\Vert\Vert C\Vert\frac{\alpha}{\lambda_1(\Sigma)}+\Vert C\Vert^2\Vert R\Vert\right)\Vert\Delta K\Vert Q\\
	&=\kappa_3\Vert\Delta K\Vert Q,
\end{align*}
where
\begin{align*}
	\kappa_3=&\frac{2}{\lambda_1(Q)}\left(\kappa_1\Vert B\Vert\Vert C\Vert\frac{\alpha}{\lambda_1(\Sigma)}+\kappa_2\Vert B\Vert\Vert C\Vert\frac{\alpha}{\lambda_1(\Sigma)}+\Vert C\Vert^2\Vert R\Vert\right).
\end{align*}
Hence,
\begin{equation}\label{C3}
	\frac{\partial X'_{\bar{K}}[E]}{\partial \delta}\preceq\kappa_3\Vert\Delta K\Vert X_{\bar{K}}.
\end{equation}

Besides, we will prove that $X''_{\bar{K}}[E]\preceq\kappa_4X_{\bar{K}}$. Since (\ref{L3}) holds, we can show that
\begin{align*}
	S_4&=-2(BEC)^{\rm T}X'_{\bar{K}}[E]-2X'_{\bar{K}}[E](BEC)+2C^{\rm T}E^{\rm T}REC\\
	&\preceq \frac{2}{\lambda_1(Q)}\left(2\kappa_2\Vert B\Vert\Vert C\Vert\frac{\alpha}{\lambda_1(\Sigma)}+\Vert C\Vert^2\Vert R\Vert\right)Q\\
	&=\kappa_4 Q,
\end{align*}
where
\begin{equation*}
	\kappa_4=\frac{2}{\lambda_1(Q)}\left(2\kappa_2\Vert B\Vert\Vert C\Vert\frac{\alpha}{\lambda_1(\Sigma)}+\Vert C\Vert^2\Vert R\Vert\right).
\end{equation*}
Thus we can prove that
\begin{equation}\label{C4}
	X''_{\bar{K}}[E]\preceq\kappa_4X_{\bar{K}}.
\end{equation}

Finally, combining with (\ref{C1}), (\ref{C2}), (\ref{C3}), and (\ref{C4}), we will estimate the upper bound of $\vert g'(\delta)\vert$. Specifically,
\begin{align*}
	\vert g'(\delta)\vert&=\left\vert 4{\rm Tr}\left((BEC)^{\rm T}\frac{\partial X'_{\bar{K}}[E]}{\partial \delta}Y_{\bar{K}}\right)+2{\rm Tr}\left((B\Delta KC)^{\rm T}X''_{\bar{K}}[E]Y_{\bar{K}}\right)\right\vert\\
	&\leq 4\Vert B\Vert\Vert C\Vert \left\Vert \frac{\partial X'_{\bar{K}}[E]}{\partial \delta}\right\Vert{\rm Tr}(Y_{\bar{K}})+2\Vert B\Vert\Vert C\Vert \Vert \Delta K\Vert \Vert X''_{\bar{K}}[E]\Vert {\rm Tr}(Y_{\bar{K}})\\
	&\leq2\Vert B\Vert\Vert C\Vert\frac{\alpha^2}{\lambda_1(Q)\lambda_1(\Sigma)}(2\kappa_3+\kappa_4)\Vert \Delta K\Vert.
\end{align*}
Hence,
\begin{align*}
	&\quad\Vert\nabla^2J({\rm vec}(K))-\nabla^2J({\rm vec}(K'))\Vert\\
	&\leq\int_0^1\sup_{\Vert E \Vert_F=1}|g'(\delta)|{\rm d}\delta=M \Vert K-K'\Vert,
\end{align*}
where
\begin{equation*}
	M=2\Vert B\Vert\Vert C\Vert\frac{\alpha^2}{\lambda_1(Q)\lambda_1(\Sigma)}(2\kappa_3+\kappa_4).
\end{equation*}
Hence, this completes the proof.
\end{proof}

\vspace{12pt}
\end{document}